\newtheorem{thm}{Theorem}[section]
\newtheorem{rem}{Remark}
\renewcommand{}
\newcommand{\nn}{\nonumber}
\def\epsilon{\varepsilon} 
\newcommand{\mat}[1]{\boldsymbol{#1}}
\begin{document}
\begin{frontmatter}
\title{
Efficient energy-stable parametric finite element methods for surface diffusion flow and applications in solid-state dewetting}

\author[1]{Meng Li}
\address[1]{School of Mathematics and Statistics, Zhengzhou University,
Zhengzhou 450001, China}
\ead{limeng@zzu.edu.cn}
\author[1]{Yihang Guo}
\author[1]{Jingjiang Bi}
\begin{abstract}
    Currently existing energy-stable parametric finite element methods for surface diffusion flow and other flows are usually limited to first-order accuracy in time. Designing a high-order algorithm for geometric flows that can also be theoretically proven to be energy-stable poses a significant challenge. Motivated by the new scalar auxiliary variable approach \cite{huang2020highly}, we propose novel energy-stable parametric finite element approximations for isotropic/anisotropic surface diffusion flows, achieving both first-order and second-order accuracy in time. Additionally, we apply the algorithms to simulate the solid-state dewetting of thin films. 
Finally, extensive numerical experiments validate the accuracy, energy stability, and efficiency of our developed numerical methods.   The designed algorithms in this work exhibit strong versatility, as they can be readily extended to other high-order time discretization methods (e.g., BDFk schemes). Meanwhile, the algorithms achieve remarkable computational efficiency and maintain excellent mesh quality. More importantly, the algorithm can be theoretically proven to possess unconditional energy stability, with the energy nearly equal to the original energy. 
\end{abstract}
\begin{keyword}
Surface diffusion flow, parametric finite element methods, energy-stable, solid-state dewetting, scalar auxiliary variable approach
\end{keyword}
\end{frontmatter}
\section{Introduction}\label{sec:intro}\numberwithin{figure}{section}
\numberwithin{equation}{section}
Surface diffusion (SDF) involves the movement and migration of surface atoms, atomic clusters, and molecules on material surfaces and interfaces in solids. This phenomenon is widely studied in materials and surface science \cite{oura2013}, and it is crucial for various processes such as thin film growth, catalysis, epitaxial growth, and the formation of surface phases \cite{chang90}.
SDF can be categorized based on the orientation of the surface lattice, leading to either isotropic or anisotropic SDF. Anisotropic SDF, in particular, has extensive applications in materials science and solid-state physics, including the crystal growth of nanomaterials \cite{Cahn91, gomer1990}, morphology development in alloys, and solid-state dewetting (SSD) \cite{Jiran90, jiang2012phase, Jiang19a}.

The SSD process is a significant application of SDF occurring in solid-solid-vapor systems. In these systems, the solid film adhering to the surface is often unstable or metastable in its as-deposited state, leading to complex morphological evolution driven by surface tension and capillarity effects, including edge retraction \cite{Wong00,Dornel06,hyun2013quantitative}, faceting \cite{Jiran90,Jiran92,ye2010mechanisms} and fingering instabilities \cite{Kan05,Ye10b,Ye11a,Ye11b}. 
This phenomenon, commonly observed in various thin film/substrate systems, characterized by the maintenance of the thin film in a solid state during the process \cite{thompson12solid,Zucker13,Rabkin14}, is known as SSD. Recently, SSD has found extensive applications in modern technology. 
For example, SSD of thin films in micro-/nanodevices can lead to the surface instabilities of well-prepared patterned structures; however, they can be leveraged for generating well-defined patterns of nanoscale particle arrays. These arrays are subsequently applied in sensors \cite{Mizsei93}, optical and magnetic devices \cite{Armelao06}, as well as catalysts for the growth of carbon and semiconductor nanowire \cite{Schmidt09}.
\begin{figure}[h]
    \centering
    \includegraphics[width=0.5\linewidth]{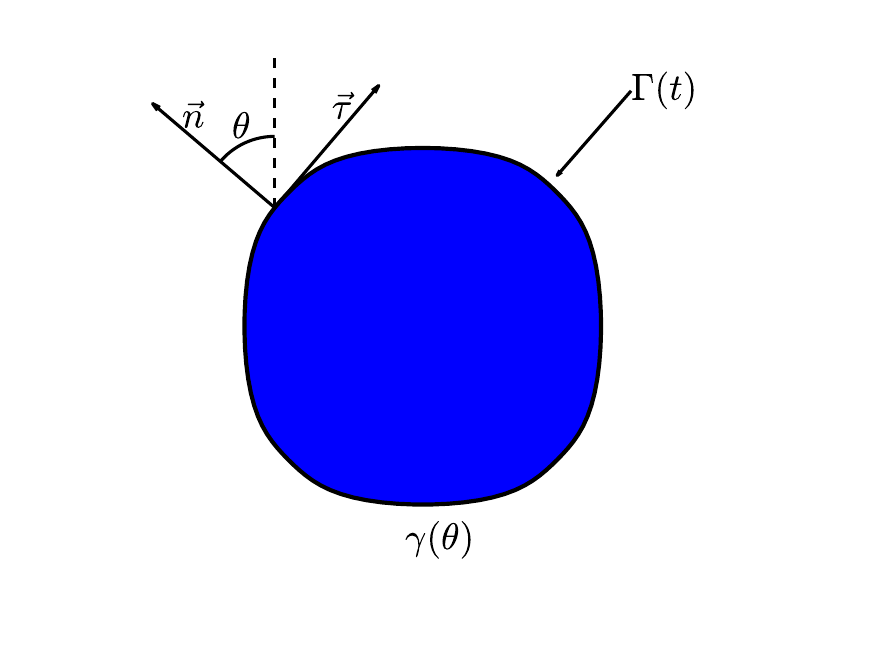}
    \caption{An illustration of SDF on a closed curve $\Gamma(t)$ with anisotropic surface energy density in two dimensions.}
    \label{fig:illustration of surface diffusion}
\end{figure}

 As depicted in Figure \ref{fig:illustration of surface diffusion}, $\Gamma(t)=(x(s,t),y(s,t))^{T}$ denotes a closed curve in two-dimensional space, $s$ is the arc length parametrization of $\Gamma(t)$, $\Vec{n}= (-\sin\theta,\cos\theta)^T$ represents the unit outward normal vector to the curve with $\theta \in [-\pi,\pi]$ being the angle between $\Vec{n}$ and $y$-axis, $\Vec{\tau}$ denotes the unit tangent vector, and $\gamma(\theta)$ represents the surface energy density function. From  \cite{Cahn94,fonseca2015}, anisotropic SDF is governed by the following partial differential equation:
\begin{equation}\label{eqn:SDF_eqn}
    \partial_t \Vec{X} = \partial_{ss}\mu\,\Vec{n},
\end{equation}
where $\mu:=\mu(s,t)$ denotes the chemical potential (or weighted mean curvature), defined by
\begin{equation}\label{eqn:mu_def}
    \mu= \left[\gamma(\theta)+{\gamma}''(\theta)\right]\kappa,
\end{equation}
with $\kappa$ representing the curvature of $\Gamma(t)$, defined by $\kappa = -\partial_{ss}\Vec{X}\cdot \Vec{n}$. The dynamic evolution of SSD is governed by SDF and contact line migration. A detailed discussion on the boundary control conditions of SSD is presented in Section \ref{sec:SSD}. 

There exists a substantial body of numerical methods for geometric evolution equations, extensively detailed in references such as \cite{Dziuk90, ColemanFM96, Wong00, Bansch04, Bansch05, hausser2007discrete, Barrett07, Barrett08JCP, Kemmochi17, Kovacs2019, Kovacs2020, Jiang2021, Zhao2021, BGNZ21volume, BJY21symmetrized, hu22evolving}. 
For comprehensive discussions, we refer to the review articles \cite{Deckelnick05, Barrett20}. Over the past 30 years, numerous numerical approximations have been proposed for the SDF, including the marker-particle method \cite{Wong00, du2010tangent}, the finite element method (FEM) via graph representation \cite{Bansch04, Deckelnick05, Deckelnick05fully}, the discontinuous Galerkin FEM \cite{xu2009local}, and the parametric FEM (PFEM) \cite{Barrett07, Barrett08Ani, Barrett19, hausser2007discrete, BGNZ23, Bao17, Li2020energy}.
Among these methods, the numerical method proposed by Barrett, Garcke, and Nürnberg (BGN) \cite{Barrett07, Barrett08JCP} is recognized as one of the most efficient and accurate methods for the SDF. The BGN scheme offers significant advantages, including the capability to demonstrate the energy stability of the scheme and the ability to maintain high mesh quality. It has undergone extensive development and has been successfully applied to solve the SDF with anisotropic surface energy \cite{Barrett07Ani, Barrett08Ani, Li2020energy, bao2023symmetrized1, bao2024structure}. Furthermore, mesh quality can also be sustained by introducing an artificial tangential velocity \cite{mikula04, hu22evolving, duan24}.
Moreover, the BGN method has been significantly extended to effectively address SSD problems, encompassing both isotropic \cite{Zhao20, Zhao2020p} and anisotropic surface energy \cite{Wang15, Bao17, Jiang19a, JiangZB20}.
However, we note that the temporal discretization methods in the aforementioned PFEMs are all first-order schemes. 
Recently, Jiang et al. \cite{jiang24,jiang24stable} attempted to develop high-order BGN methods based on leapfrog and BDFk schemes, but no theoretical results have been obtained.
Constructing a high-order temporal discretized PFEM with energy-stable property remains a significant challenge. 

The challenge brings to mind two very popular methods: the invariant energy quadratization (IEQ) method \cite{chen2019efficient,yang2016linear,chen2019fast} and the scalar auxiliary variable (SAV) method \cite{Shen18,huang2020highly,huang2022new}. Both methods all introduce an auxiliary variable, but the SAV method relaxes the lower bound of the nonlinear free energy potential compared to the IEQ method. 
The SAV method has experienced its rapid evolution process. Compared to the original SAV method proposed in \cite{Shen18,shen2019new}, Huang et al. in \cite{huang2022new} presented a new SAV method that can reduce computational costs by approximately half and maintain unconditional dissipation of the modified energy under high-order BDFk schemes. Moreover, the new SAV method offers several other advantages. On the one hand, it provides rough error estimates for BDFk ($1\le k \le5$) schemes. On the other hand, it can be widely applied to dissipative systems \cite{huang2023stability,yang2024linear,li2023error}. Although the SAV method has been extensively developed, there is currently no literature on solving geometric flow problems using the SAV method. 
This is the primary focus of our article. In addition to effectively constructing the first-order time-accurate energy-stable parametric finite element approximation, we also develop the first-order time-accurate approximately area-conservation scheme, and the high-order time-accurate energy-stable algorithm. Compared to the works presented in \cite{jiang24,jiang24stable}, this high-order time scheme can achieve certain theoretical results and also accommodate flows with anisotropic surface energy. As an extension, these energy-stable methods are also utilized to simulate anisotropic SSD problems.

The rest of the paper is organized as follows. In Section \ref{sec:isotropic}, we introduce novel energy-stable parametric finite element approximations for isotropic SDF, including BDF1-SAV, BDF1-CSAV and BDF2-SAV methods. In Section \ref{sec:anisotropic}, we further extend these numerical schemes to anisotropic SDF. In Section \ref{sec:SSD}, we discuss boundary conditions for SSD and extend the three energy-stable schemes to address SSD problems. We present extensive experiments to validate the efficiency and accuracy of the proposed schemes in Section \ref{sec:experiment}. Finally, we draw some conclusions in Section \ref{sec:conclusion}.

\section{For isotropic SDF}\label{sec:isotropic}
\numberwithin{equation}{section}
 In this section, motivated by \cite{huang2022new}, we propose three types of energy-stable PFEMs for the isotropic SDF, which maintains first-order or second-order accuracy in time. 
\subsection{The new formulation }
We firstly consider the isotropic SDF, which can be regarded as the $H^{-1}$-gradient flow of the length functional $L(t)$: 
\begin{subequations}\label{eqn:is_model1}
  \begin{align}
     & \partial_t\Vec{X}\cdot \vec n=\partial_{ss}\kappa,\label{eqn:isdf1}\\
     & \kappa\Vec{n}=-\partial_{ss}\Vec{X},\label{eqn:isdf2}
  \end{align}
\end{subequations}
where $0<s<L(t)$ is the arc length parameter  with $L(t):=\int_{\Gamma(t)} 1 \,ds$ being the perimeter of $\Gamma(t)$, and $\kappa:=\kappa(s, t)$ is the curvature of the interface curve. 
We introduce a time independent variable $\rho\in\mathbb{I}=[0,1]$. Then, the arc length parameter $s$ can be computed by $s(\rho,t)=\int_0^\rho |\partial_q \Vec{X}|\, dq$. We parameterize the evolution closed curves $\Gamma(t)$ as:
\begin{equation*}
    \Vec{X}(\rho,t):=\left(x(\rho,t),y(\rho,t)\right)^T: \mathbb{I}\times[0,T] \to \mathbb{R}^2.  
\end{equation*}
According to the parameterization above, we can equivalently represent $ds=\partial_{\rho}sd\rho$, where $\partial_{\rho}s=|\partial_{\rho}\Vec{X}|$.
Then we define the functional space with respect to the closed curve $\Gamma(t)$ as
\begin{equation}\label{eqn:fs}
    L^2(\mathbb{I}):=\left\{u:\mathbb{I}\to \mathbb{R} \bigg| \int_{\Gamma(t) }|u(s)|^2 \,ds=\int_{\mathbb{I}}|u(s(\rho,t))|^2\partial_\rho s \, d\rho < \infty\right\},
\end{equation}
equipped with the $L^{2}$-inner product 
\begin{equation}\label{eqn:innerp}
    (u,v)_{\Gamma(t)}:=\int_{\Gamma(t)} u(s)v(s)\, ds=\int_{\mathbb{I}} u(s(\rho,t)) v(s(\rho,t)) \partial_\rho s \,d\rho,\quad \forall u,v\in L^2 (\mathbb{I}).
\end{equation}
 The definition \eqref{eqn:innerp} can be directly extended to $[L^2(\mathbb{I})]^2$.

For $t\ge 0$, $A(t)$ represents the area of the domain enclosed by $\Gamma(t)$, and $L(t)$ is the perimeter of $\Gamma(t)$. Then, using Reynolds transport theorem, we can obtain  
\begin{subequations} \label{eqn:is_str}
    \begin{align} 
        &\label{eqn:is_stra}\frac{d}{dt}A(t)=\int_{\Gamma(t)} \partial_t\Vec{X}\cdot \Vec{n}\,ds=\int_{\Gamma(t)} \partial_{ss} \kappa\, ds \equiv 0, \quad \forall t \ge 0,\\ & \label{eqn:is_strb}\frac{d}{dt}L(t)=\int_{\Gamma(t)} (\partial_t \Vec{X} \cdot \Vec{n})\kappa \,ds=-\int_{\Gamma(t)} (\partial_s \kappa)^2\,ds \le 0, \quad \forall t\ge 0,
    \end{align}
\end{subequations}
 i.e., area conservation and energy (perimeter) dissipation.

We next introduce the following new time-dependent variables: 
\begin{align}
    R(t)=L(t)\quad \text{and}\quad \xi(t)=\frac{R(t)}{L(t)}\equiv 1.
\end{align}
Then, from \eqref{eqn:is_strb}, we have 
\begin{align}
  \frac{d}{dt}R(t)= \frac{d}{dt}L(t)= -\xi(t) \int_{\Gamma(t)} (\partial_s \kappa)^2\,ds.
\end{align}
Therefore, we can reformulate the system \eqref{eqn:is_model1} into the following expanded form:
\begin{subequations} \label{eqn:is_model2}
\begin{align}
   & \label{/eqn:is_sdf2a}\partial_t \Vec{X} \cdot \Vec{n}=\partial_{ss}\kappa,\\
       \label{/eqn:is_sdf2b}& \kappa \Vec{n}=-\partial_{ss} \Vec{X},\\ \label{/eqn:is_sdf2c} 
       & \frac{d}{dt}R(t)=-\xi(t)\int_{\Gamma(t)} (\partial_s \kappa)^2 \,ds.
\end{align} 
\end{subequations}

Define the Sobolev spaces as 
\begin{align*}
&H^1(\mathbb{I}):=\left \{ u:\mathbb{I}\to\mathbb{R}~|\hspace{0.5em}u\in L^2(\mathbb{I})\nn\
\text{and}\ \partial_\rho u \in L^2(\mathbb{I}) \right \}\quad\text{and}\quad \mathbb{X}:=H^1(\mathbb{I})\times H^1(\mathbb{I}).
\end{align*}
Then the weak formulation of the system \eqref{eqn:is_model2} is given as follows: 
Given the initial closed curve $\Gamma(0)=\Vec{X}(\mathbb{I},0)\in \mathbb{X}$, to find the evolution curve $\Gamma(t)=\Vec{X}(\mathbb{I}, t)\in \mathbb{X}$ and the curvature $\kappa(\cdot, t)\in H^1(\mathbb{I})$ for $t>0$, such that 
\begin{subequations}\label{eqn:is_weak}
      \begin{align}
          &\label{eqn:is_weaka}(\partial_t\Vec{X}\cdot \Vec{n},\varphi)_{\Gamma(t)}+(\partial_s \kappa,\partial_s\varphi)_{\Gamma(t)}=0, \qquad  \forall \varphi \in H^1(\mathbb{I}),\\&\label{eqn:is_weakb}
          (\kappa\Vec{n},\Vec{\omega})_{\Gamma(t)}-\left(\partial_s\Vec{X},\partial_s\Vec{\omega}
          \right)_{\Gamma(t)}=0,\qquad \forall \Vec{\omega} \in \mathbb{X}.
      \end{align}
  \end{subequations}
Obviously, we can prove that 
the new defined system \eqref{eqn:is_model2} 
and corresponding weak formulation \eqref{eqn:is_weak}
also hold the area conservation and energy  dissipation properties that are defined in 
\eqref{eqn:is_str}.

 \subsection{Numerical schemes}\label{space_dis}

 For a positive integer $N > 2$, let $h=\frac{1}{N}$ denote the grid size. With this, the reference domain $\mathbb{I}$ can be uniformly partitioned into subintervals $\mathbb{I}=\cup_{j=1}^{N} \mathbb{I}_j$, where each subinterval $\mathbb{I}_j=[\rho_{j-1}, \rho_j]$ with $\rho_j=jh $ for $j=0,...,N$. Then, we define the finite element subspace as
 \begin{equation*}
     V^h({\mathbb{I}}):=\left\{u\in C(\mathbb{I}): u|_{\mathbb{I}_j} \in \mathbb{P}^1({\mathbb{I}_j}), \forall j=1,2,...,N \right\}\subseteq H^1(\mathbb{I}),
 \end{equation*}
 where $\mathbb{P}^1(\mathbb{I}_j)$ denotes the space of polynomials on subinterval $\mathbb{I}_j$ with degree at most $1$.

 Let $\Gamma^h(t):=\Vec{X}^h(\cdot,t) \in [V^h(\mathbb{I})]^2$ and $\kappa^h(\cdot,t) \in V^h(\mathbb{I})$ be the numerical approximations of the closed curve $\Gamma(t):=\Vec{X}(\cdot,t)\in \mathbb{X}$ and $\kappa(\cdot,t) \in H^1(\mathbb{I})$, respectively. For $t\ge 0$, the closed curve $\Gamma^h(t)$ is indeed composed by the connected line segments $\{\Vec{h}_j(t)\}_{j=1}^{N}$. To ensure the non-degenerate meshes, we always assume
 \begin{equation}
     h_{\min}(t):=\min_{1\le j \ge N}\left|\Vec{h}_j (t)\right|>0 \quad \text{with} \quad \Vec{h}_j(t):=\Vec{X}^h(\rho_j,t)-\Vec{X}^h(\rho_{j-1},t), \quad j=1,2,...,N,
 \end{equation}
 where $|\Vec{h}_j(t)|$ denotes the length of the vector $\Vec{h}_j(t)$ for $j=1,2,...,N$.

Given two piecewise linear scalar (or vector) functions $u,v$ defined on the interval $\mathbb{I}$ with possible jumps at the nodes $\{ \rho_j \}_{j=0}^{N}$, we define the mass-lumped inner product $(\cdot,\cdot)_{\Gamma^h(t)}^h$ over $\Gamma^h(t)$ as
\begin{equation} \label{eqn:spaceinnerp}
    (u,v)_{\Gamma^{h}(t)}^{h}:=\frac{1}{2}\sum_{j=1}^{N}|\Vec{h}_j|\left[(u\cdot v)(\rho_j^-)+(u\cdot v)(\rho_{j-1}^+)\right],
\end{equation}
where $u(\rho_j^{\pm})=\lim\limits_{\rho \to \rho_j^{\pm}}u(\rho)$ for $0\le j\le N $.

We then establish a spatial semi-discrete scheme for the weak formulation \eqref{eqn:is_weak} as follows: Given the initial curve $\Gamma^h(0):=\Vec{X}^h(\cdot,0)\in [V^h(\mathbb{I})]^2$, find the closed curve $\Gamma^h(t):=\Vec{X}^h(\cdot,t) \in [V^h(\mathbb{I})]^2$ and the curvature $\kappa^h(\cdot,t) \in V^h(\mathbb{I})$, such that
\begin{subequations}\label{eqn:is_spaceweak}
      \begin{align}
&\label{eqn:is_spaceweaka}\left(\partial_t\Vec{X}^h\cdot \Vec{n}^h,\varphi^h\right)_{\Gamma^h(t)}^h+\left(\partial_s \kappa^h,\partial_s\varphi^h\right)_{\Gamma^h(t)}^h=0, \qquad  \forall \varphi^h \in V^h(\mathbb{I}),\\&\label{eqn:is_spaceweakb}
          \left(\kappa^h\Vec{n}^h,\Vec{\omega}^h\right)_{\Gamma^h(t)}^h-\left(\partial_s\Vec{X}^h,\partial_s\Vec{\omega}^h
          \right)_{\Gamma^h(t)}^h=0,\qquad \forall \Vec{\omega}^h \in [V^h(\mathbb{I})]^2.
      \end{align}
  \end{subequations}
Based on the spatial semi-discrete scheme \eqref{eqn:is_spaceweak}, we design the following fully-discrete schemes with first-order and second-order in time. To this end, 
we divide the time domain $[0,T]=\cup_{m=0}^{M-1} [t_{m},t_{m+1}]$, with $M$ be a positive integer, and $0=t_0<t_1<...<t_M=T$ with uniform-time-steps $\Delta t:= t_{m+1}-t_{m}$. Then, for $m\ge0$, take $\Gamma^m=\Vec{X}^m(\rho)\in [V^h(\mathbb{I})]^2$ and $\kappa^m \in V^h(\mathbb{I}) $ as the numerical approximations of $\Gamma^h(t_m)=\Vec{X}^h(\cdot,t_m)$ and $\kappa^h(\cdot,t_m)$ respectively.   
\begin{itemize}
\item {\textbf{First-order time-accurate scheme: BDF1-SAV}}. 
Referring to the spatial semi-discrete scheme \eqref{eqn:is_spaceweak}, the full-discrete scheme of the weak formulation \eqref{eqn:is_model2}, with first-order accuracy in time, is as follows: Given the initial curve $\Gamma^{0}:=\Vec{X}^{0}(\cdot)\in [V^h(\mathbb{I})]^2$, find the closed curve $\overline{\Gamma}^{m+1}:=\Vec{\overline{X}}^{m+1}(\cdot) \in [V^h(\mathbb{I})]^2$ and the curvature $\overline{\kappa}^{m+1}(\cdot)\in V^h(\mathbb{I})$, such that
\begin{subequations}\label{eqn:first_dis}
    \begin{align}\label{eqn:first_disa}
        &\left(\frac{\Vec{\overline{X}}^{m+1}-\Vec{X}^{m}}{\Delta t}\cdot \Vec{n}^{m},\varphi^h \right)_{\Gamma^m}^h + \left(\partial_s \overline{\kappa}^{m+1},\partial_s\varphi^h \right)_{\Gamma^m}^h=0,\qquad 
        \forall \varphi^h\in V^h(\mathbb{I}),\\ \label{eqn:first_disb}
        &\left(\overline{\kappa}^{m+1}\Vec{n}^m, \Vec{\omega}^h\right)_{\Gamma^m}^h-\left(\partial_s\Vec{\overline{X}}^{m+1},\partial_s\Vec{\omega}^h\right)_{\Gamma^m}^h=0,\qquad 
        \forall \Vec{\omega}^h\in [V^h(\mathbb{I})]^2.
    \end{align}
\end{subequations}
Then, we define the discrete perimeter $L^h(\Vec{X}^{m})$ as
\begin{equation}\label{eqn:dis_perimeter}
    L^h(\Vec{X}^{m}):=\sum_{j=1}^{N}\big|\Vec{h}_j^m\big|.
\end{equation}
Therefore, by adopting the first-order finite difference method to solve \eqref{/eqn:is_sdf2c}, we obtain
\begin{equation}\label{eqn:ener_diff}
\frac{R^{m+1}-R^{m}}{\Delta t}=-\xi^{m+1}\left(\partial_s\overline{\kappa}^{m+1},\partial_s\overline{\kappa}^{m+1} \right)_{\overline{\Gamma}^{m+1}}^{h} \quad \text{with} \quad \xi^{m+1}=\frac{R^{m+1}}{L^h(\Vec{\overline{X}}^{m+1})}.
\end{equation}
To maintain temporal accuracy, we introduce a time-dependent variable:
\begin{equation}\label{eqn:eta_iter}
    \zeta^{m+1}=1-(1-\xi^{m+1})^r,
\end{equation}
where $r\geq 2$ is an undetermined constant. 
Then, the closed curve ${\Gamma}^{m+1}:=\Vec{{X}}^{m+1}(\cdot) \in [V^h(\mathbb{I})]^2$ and the curvature ${\kappa}^{m+1}(\cdot)\in V^h(\mathbb{I})$ are computed by 
 \begin{equation}\label{eqn:realX}
\Vec{X}^{m+1}=\zeta^{m+1}\Vec{\overline{X}}^{m+1}\quad \text{and} \quad \kappa^{m+1} =\zeta^{m+1}\overline{\kappa}^{m+1}.
 \end{equation}
\item {\textbf{First-order time-accurate scheme with approximately area-conservation: BDF1-CSAV.}}
Given $\Vec{X}^m\in V^h(\mathbb{I})$ and $R^m$, find $(\Vec{\overline{X}}^{m+1}, \overline{\kappa}^{m+1})\in [V^h(\mathbb{I})]^2\times V^h(\mathbb{I})$, $\xi^{m+1}$, $R^{m+1}$ and $(\Vec{X}^{m+1}, \kappa^{m+1})\in [V^h(\mathbb{I})]^2\times V^h(\mathbb{I})$, such that
\begin{subequations}\label{eqn:first_dis_ac}
    \begin{align}\label{eqn:first_disa_ac}
        &\left(\frac{\Vec{\overline{X}}^{m+1}-\Vec{X}^{m}}{\Delta t}\cdot \Vec{\overline{n}}^{m+\frac12},\varphi^h \right)_{\Gamma^m}^h + \left(\partial_s \overline{\kappa}^{m+1},\partial_s\varphi^h \right)_{\Gamma^m}^h=0,\qquad 
        \forall \varphi^h\in V^h(\mathbb{I}),\\ 
    \label{eqn:first_disb_ac}
        &\left(\overline{\kappa}^{m+1}\Vec{\overline{n}}^{m+\frac12}, \Vec{\omega}^h\right)_{\Gamma^m}^h-\left(\partial_s\Vec{\overline{X}}^{m+1},\partial_s\Vec{\omega}^h\right)_{\Gamma^m}^h=0,\qquad 
        \forall \Vec{\omega}^h\in [V^h(\mathbb{I})]^2,\\
\label{eqn:first_disc_ac}
& \frac{R^{m+1}-R^{m}}{\Delta t}=-\xi^{m+1}\left(\partial_s\overline{\kappa}^{m+1},\partial_s\overline{\kappa}^{m+1} \right)_{\overline{\Gamma}^{m+1}}^{h} \quad \text{with} \quad \xi^{m+1}=\frac{R^{m+1}}{L^h(\Vec{\overline{X}}^{m+1})},\\
\label{eqn:first_disd_ac}
 & \zeta^{m+1}=1-(1-\xi^{m+1})^r,\\
\label{eqn:first_dise_ac}
 & \Vec{X}^{m+1}=\zeta^{m+1}\Vec{\overline{X}}^{m+1}\quad \text{and} \quad \kappa^{m+1} =\zeta^{m+1}\overline{\kappa}^{m+1},
\end{align}
\end{subequations}
where 
\begin{align}
    \Vec{\overline{n}}^{m+\frac{1}{2}}:=-\frac{\left(\partial_s \Vec{\overline{X}}^{m+1}+\partial_s \Vec{X}^{m}\right)^\bot}{2}=-\frac{\left(\partial_\rho \Vec{\overline{X}}^{m+1}+\partial_\rho \Vec{X}^{m}\right)^\bot}{2|\partial_\rho \Vec{X}^m|}.
\end{align}
    \item {\textbf{Second-order time-accurate scheme: BDF2-SAV.}} 
    Given $\Vec{X}^{m}\in V^h(\mathbb{I})$, $\Vec{X}^{m-1}\in V^h(\mathbb{I})$, and $R^{m}$, we can compute the $(\Vec{\overline{X}}^{m+1}, \overline{\kappa}^{m+1})\in [V^h(\mathbb{I})]^2\times V^h(\mathbb{I})$, $\xi^{m+1}$, $R^{m+1}$ and $(\Vec{X}^{m+1}, \kappa^{m+1})\in [V^h(\mathbb{I})]^2\times V^h(\mathbb{I})$ by using the following second-order temporal scheme:
    \begin{subequations}\label{eqn:sec_diff}
    \begin{align}\label{eqn:sec_diffa}
        &\left ( \frac{\frac{3}{2}\Vec{\overline{X}}^{m+1}-2\Vec{X}^{m}+\frac{1}{2}\Vec{X}^{m-1}}{\Delta t}\cdot \Vec{\tilde{n}}^{m+1},\varphi^{h}
         \right )_{\tilde{\Gamma}^{m+1}}^{h} +\left ( \partial_s \overline{\kappa}^{m+1},\partial_s \varphi^{h}\right )_{\tilde{\Gamma}^{m+1}}^{h}=0 ,\\&\label{eqn:sec_diffb}
         \left ( {\overline{\kappa}^{m+1}}\Vec{\tilde{n}}^{m+1},\Vec{\omega}^h \right )_{\tilde{\Gamma}^{m+1}}^{h}-\left ( \partial_s{\Vec{\overline{X}}}^{m+1},\partial_s\Vec{\omega}^h\right )_{\tilde{\Gamma}^{m+1}}^{h}=0, \\&\label{eqn:sec_diffc}\frac{R^{m+1}-R^{m}}{\Delta t}=-\xi^{m+1}\left (\partial_s\overline{\kappa}^{m+1},\partial_s\overline{\kappa}^{m+1}\right )_{\overline{\Gamma}^{m+1}}^{h}\quad \text{with}\quad
         \xi^{m+1}=\frac{R^{m+1}}{L^h(\Vec{\overline{X}}^{m+1})},  
         \\& \label{eqn:sec_diffd}
         \zeta^{m+1}=1-(1-\xi^{m+1})^r, \\&
         \label{eqn:sec_diffe}\Vec{X}^{m+1}=\zeta^{m+1}\Vec{\overline{X}}^{m+1}\quad \text{and} \quad \kappa^{m+1}=\zeta^{m+1}\overline{\kappa}^{m+1},
    \end{align}
\end{subequations}
where $r\ge 3$ and $\tilde{\Gamma}^{m+1}=\Vec{\tilde{X}}^{m+1}$ is computed from the $\Vec{X}^{m}$ using the first-order temporal scheme, which provides a  suitable approximation for the closed curve $\Gamma^{m+1}$. Since the scheme is not selfstarting, the first step value $\vec X^1$ can be given by the BDF1-SAV method.  
\end{itemize}

 \begin{rem}\label{rem:order_r}
      For above proposed schemes, in order to maintain $k$-order time accuracy, $r$ must satisfy $r\ge k+1$, $k=1$, $2$. Indeed, from \eqref{eqn:eta_iter}, there holds
    \begin{subequations}
          \begin{align*}
    &\frac{\alpha_{k}\Vec{\overline{X}}^{m+1}-A_k(\Vec{X}^{m})}{\Delta t}=\frac{\alpha_{k}\Vec{X}^{m+1}-A_k(\Vec{X}^{m})}{\Delta t}+\frac{\alpha_{k}\Vec{\overline{X}}^{m+1}-\alpha_k\Vec{X}^{m+1}}{\Delta t}\\&=\frac{\alpha_{k}\Vec{X}^{m+1}-A_k(\Vec{X}^{m})}{\Delta t}+\alpha_{k}\Vec{\overline{X}}^{m+1}\frac{1-\zeta^{m+1}}{\Delta t}=\frac{\alpha_{k}\Vec{X}^{m+1}-A_k(\Vec{X}^{m})}{\Delta t}+\alpha_{k}\Vec{\overline{X}}^{m+1}\frac{(1-\xi^{m+1})^r}{\Delta t},
          \end{align*}
      \end{subequations}
where $\alpha_1=1$, $A_1(\Vec{X}^{m})=\Vec{X}^{m}$ and $\alpha_2=\frac{3}{2}$, $A_2(\Vec{X}^{m})=2\Vec{X}^{m}-\frac{1}{2}\Vec{X}^{m-1}$. Due to \eqref{eqn:ener_diff}, $\xi^{m+1}$ is a first-order approximation to $1$. Therefore, to obtain the desired convergence order of the numerical scheme, we should select the suitable constant 
$r$ that holds the condition: $r\ge k+1$, $k=1$, $2$.
 \end{rem}

We can directly prove the following properties of the full-discrete schemes proposed above. 

\begin{thm}\label{thm:propertiesofiso}
For the BDF1-SAV, BDF1-CSAV and BDF2-SAV schemes,
given the energy $R^m\ge 0$, then there hold 
\begin{itemize}
    \item[(i)] $\xi^{m+1}\ge 0$ , $R^{m+1}\ge 0$;
    \item[(ii)] $\forall m\ge 0$, $R^{m+1}\le R^m$;
    \item[(iii)] $For\text{ } \forall r >0,\text{ }\exists M_r>0, \text{ }satisfying \text{ } that \text{ }\forall m \ge 0$, there holds $L^h(\Vec{X}^m) \le M_r$.
\end{itemize}
\end{thm}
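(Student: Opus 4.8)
The three schemes share exactly the same algebraic ``energy block'' \eqref{eqn:ener_diff}--\eqref{eqn:realX} (respectively \eqref{eqn:first_disc_ac}--\eqref{eqn:first_dise_ac} and \eqref{eqn:sec_diffc}--\eqref{eqn:sec_diffe}), so I would prove all three at once. Abbreviate $G_{m+1}:=\left(\partial_s\overline\kappa^{m+1},\partial_s\overline\kappa^{m+1}\right)^h_{\overline\Gamma^{m+1}}$ and $L^h_{m+1}:=L^h(\vec{\overline X}^{m+1})$. The first observation is that $G_{m+1}\ge0$, since the mass-lumped product \eqref{eqn:spaceinnerp} of a function with itself is a sum of edge-length-weighted squares, while $L^h_{m+1}>0$ by the non-degeneracy assumption on the mesh. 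Solving the relation \eqref{eqn:ener_diff}, which is linear in $R^{m+1}$, then gives the closed form
\[
R^{m+1}=\frac{R^m}{1+\Delta t\,G_{m+1}/L^h_{m+1}}.
\]

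For (i) and (ii) this is essentially all that is required: the denominator lies in $[1,\infty)$, so $R^m\ge0$ forces $0\le R^{m+1}\le R^m$, whence also $\xi^{m+1}=R^{m+1}/L^h_{m+1}\ge0$. Iterating the monotonicity from $R^0\ge0$ yields (i) and (ii) for every $m$.

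For (iii) I would first convert the scaling \eqref{eqn:realX} into a perimeter identity. Since $\vec X^m=\zeta^m\vec{\overline X}^m$ with $\zeta^m$ a scalar, each edge rescales as $\vec h^m_j=\zeta^m\vec{\overline h}^m_j$, so $L^h(\vec X^m)=|\zeta^m|\,L^h_m$; eliminating $L^h_m$ through $R^m=\xi^m L^h_m$ and using (ii) gives, for $m\ge1$,
\[
L^h(\vec X^m)=\frac{\bigl|1-(1-\xi^m)^r\bigr|}{\xi^m}\,R^m\le \frac{\bigl|1-(1-\xi^m)^r\bigr|}{\xi^m}\,R^0.
\]
Thus (iii) reduces to a uniform bound on $p(\xi):=\bigl|1-(1-\xi)^r\bigr|/\xi$ along the sequence $\xi=\xi^m$. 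The key elementary fact here is the Bernoulli-type inequality $1-(1-\xi)^r\le r\,\xi$, valid for $\xi\in(0,1]$ and $r\ge1$ (it follows from $f(t)=(r-1)-rt+t^r\ge0$ on $[0,1]$ with $t=1-\xi$, since $f(1)=0$ and $f'\le0$), which together with $p\ge0$ gives $0\le p(\xi)\le r$ on $(0,1]$. Consequently, if one can show $\xi^m\le1$ for all $m$, then $L^h(\vec X^m)\le r\,R^0$, and (iii) holds with $M_r=\max\{r\,R^0,\,L^h(\vec X^0)\}$, the second term covering $m=0$; more generally, any uniform bound $\xi^m\le C$ suffices, with $M_r=\max\{R^0\max_{[0,C]}p,\,L^h(\vec X^0)\}$.

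The main obstacle is exactly this bound on $\xi^m$ (equivalently $R^m\le L^h_m$): since $R^m\le R^0$, a large $\xi^m=R^m/L^h_m$ can only come from a collapsing intermediate perimeter $L^h_m$. The natural ingredients are: (a) the BGN perimeter-monotonicity, obtained by testing \eqref{eqn:first_disa}--\eqref{eqn:first_disb} (resp.\ \eqref{eqn:sec_diffa}--\eqref{eqn:sec_diffb}) with $\varphi^h=\Delta t\,\overline\kappa^{m+1}$ and $\vec\omega^h=\vec{\overline X}^{m+1}-\vec X^m$ and invoking the elementary inequality $|\vec p|-|\vec q|\le(|\vec p|^2-\vec p\cdot\vec q)/|\vec q|$ (a restatement of $(|\vec p|-|\vec q|)^2\ge0$), which controls $L^h_{m+1}$ from above; and (b) a positive lower bound on $L^h_m$, for which I would use the area conservation (exact for BDF1-CSAV, approximate otherwise) together with a discrete isoperimetric inequality. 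Reconciling the dissipation terms in (a), which are evaluated on the two distinct polygons $\Gamma^m$ and $\overline\Gamma^{m+1}$, and making the lower bound in (b) uniform in $m$ — especially for the merely approximately area-preserving BDF1-SAV and BDF2-SAV schemes — is where the real work lies. For BDF2 the perimeter estimate in (a) is replaced by its two-step analogue with test direction $\vec\omega^h=\tfrac32\vec{\overline X}^{m+1}-2\vec X^m+\tfrac12\vec X^{m-1}$.
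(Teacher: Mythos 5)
Your treatment of (i) and (ii) is correct and is exactly the paper's argument (the paper proves the anisotropic analogue, Theorem \ref{thm:ener}, and refers Theorem \ref{thm:propertiesofiso} to it): you solve the linear relation \eqref{eqn:ener_diff} for $R^{m+1}$, observe that the mass-lumped dissipation term is nonnegative, and read off $0\le R^{m+1}\le R^m$ and $\xi^{m+1}\ge 0$. Your closed form $R^{m+1}=R^m/(1+\Delta t\,G_{m+1}/L^h_{m+1})$ is the same identity the paper writes as $\xi^{m+1}=R^m/(W^h(\vec{\overline X}^{m+1})+\Delta t\,G_{m+1})$.

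For (iii) your reduction also coincides with the paper's: writing $1-(1-\xi)^r=\xi\sum_{j=0}^{r-1}(1-\xi)^j$, both you and the paper arrive at $L^h(\vec X^m)=|\zeta^m|\,L^h_m=\bigl|\sum_{j=0}^{r-1}(1-\xi^m)^j\bigr|\,R^m\le \bigl|P_{r-1}(\xi^m)\bigr|\,R^0$, so everything hinges on a uniform bound for the polynomial factor, i.e.\ on $\xi^m$ not blowing up, i.e.\ on the intermediate quantity $L^h_m+\Delta t\,G_m$ not collapsing relative to $R^{m-1}$. You are right that this is the crux, and your proof is incomplete because you leave it open. But you should be aware that the paper does not close it either: in the proof of Theorem \ref{thm:ener} the paper passes from $\xi^{m+1}\le M/(W^h(\vec{\overline X}^{m+1})+\Delta t\,G_{m+1})$ to ``there must exist a bound constant $M_r$'' with $|P_r(\xi^{m+1})\xi^{m+1}|\le M_r/(W^h(\vec{\overline X}^{m+1})+\Delta t\,G_{m+1})$ by assertion, which is equivalent to assuming $|P_{r-1}(\xi^{m+1})|$ is uniformly bounded --- precisely the point you flag. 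So your proposal is faithful to the paper's route and is more honest about its weak link; the additional machinery you sketch to repair it (BGN-type perimeter monotonicity tested with $\varphi^h=\Delta t\,\overline\kappa^{m+1}$ and $\vec\omega^h=\vec{\overline X}^{m+1}-\vec X^m$, plus a discrete isoperimetric lower bound on $L^h_m$) appears nowhere in the paper and is not needed to reproduce the argument as published, though it would be needed to make (iii) rigorous. One small economy you could borrow from the paper: rather than dividing by $\xi^m$, multiply $|\zeta^{m+1}|\le M_r/(W^h(\vec{\overline X}^{m+1})+\Delta t\,G_{m+1})$ directly against $W^h(\vec X^{m+1})=|\zeta^{m+1}|\,W^h(\vec{\overline X}^{m+1})$ so the intermediate perimeter cancels; the logical content is the same.
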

\begin{proof}
  We don't intend to provide the proof here, since the results can be demonstrated following similar methods as ones presented in Section \ref{sec:anisotropic}. 
\end{proof}

\section{For anisotropic SDF}\label{sec:anisotropic}
\numberwithin{equation}{section}
In this section, we consider the anisotropic SDF, 
\begin{subequations}\label{eqn:model_ani}
 \begin{align}
 \label{eqn:modela_ani}
 &\partial_t\vec X = \partial_{ss}\mu\,\vec{n},\quad 0<s<L(t),\quad t>0,\\
&\mu=\left[\gamma(\theta)+\gamma''(\theta)\right]\kappa,\qquad \kappa = -(\partial_{ss}\vec X)\cdot\vec n,
 \label{eqn:modelb_ani}
 \end{align}
\end{subequations}
where $\mu$ is the chemical potential. 
The anisotropic SDF \eqref{eqn:model_ani} can be regarded as the $H^{-1}$-gradient flow of the energy functional $W(t)$:
\begin{equation}\label{eqn:ener_fun}
    W(t):=\int_{\Gamma(t)} \gamma(\theta)\, ds, \quad t\ge0.
\end{equation}
Introduce a matrix $\mat{B}(\theta)$ as 
\begin{equation}\label{Matrix:Bqx}
\mat{B}(\theta)=
\begin{pmatrix}
\gamma(\theta)&-\gamma'(\theta) \\
\gamma'(\theta)&\gamma(\theta)
\end{pmatrix}
\begin{pmatrix}
\cos2\theta&\sin2\theta \\
\sin2\theta&-\cos2\theta
\end{pmatrix}+
\mathscr S(\theta)\left[\frac{1}{2}\mat{I}-\frac{1}{2}\begin{pmatrix}
\cos2\theta&\sin2\theta \\
\sin2\theta&-\cos2\theta
\end{pmatrix}\right].
\end{equation}
where $\mat I$ is a $2\times 2$ identity matrix and $\mathscr S(\theta)$ is a stability function to be determined later. Additionally, if the $\gamma(\theta)$ satisfies that $\gamma(\theta)=\gamma(\theta+\pi)$, the matrix $B(\theta)$ can be proven to be positive definite. 
\begin{rem}
    For the stability function $\mathscr S (\theta)$, there always exists a minimal stability function $\mathscr S_0(\theta)$. When $\mathscr S(\theta)\ge \mathscr S_0(\theta)$, it ensures that $\mat{B}(\theta)$ is a symmetric positive definite matrix. Further information about it will not be discussed here.
\end{rem}
Based on the definitions of above matrices, we can prove  \cite{li2024parametric}:
\begin{align}
\left[\gamma(\theta)+\gamma''(\theta)\right]\kappa\vec n=   
-\partial_s\left[ \mat{B}(\theta)\partial_s\vec{X}\right].
\end{align}
Therefore, the formulation \eqref{eqn:model_ani} can be reformulated into:
\begin{subequations}\label{eqn:model_ani2}
 \begin{align}
 \label{eqn:modela_ani2}
 &\partial_t\vec X = \partial_{ss}\mu\,\vec{n},\quad 0<s<L(t),\quad t>0,\\
&\mu\Vec{n}=-\partial_s\left[\mat{B}(\theta)\partial_s\Vec{X}\right].
 \label{eqn:modelb_ani2}
 \end{align}
\end{subequations}
For the system \eqref{eqn:model_ani2}, we can obtain the following energy-dissipative property: 
\begin{equation}\label{eqn:ener_disp}
    \frac{d}{dt}W(t) =\int_{\Gamma(t)}\left[\mat{B}(\theta)\partial_s\Vec{X}\right]\cdot\partial_s\partial_t\Vec{X}\,ds=-\int_{\Gamma(t)}(\partial_s\mu)^2\,ds\le0, \qquad \forall t\ge0.
\end{equation}
The proof of area-conservation property is similar to \eqref{eqn:is_stra} and will be not repeated here.

\subsection{The new formulation}
As the isotropic case, we introduce the following time-dependent auxiliary variables:
\begin{equation}\label{eqn:aid_1}
  R(t)=W(t) \quad \text{and} \quad\xi(t)=\frac{R(t)}{W(t)} \equiv 1,
\end{equation}
By the definition of $R(t)$, one has
\begin{equation}\label{eqn:aid_2}
    \frac{d}{dt}R(t)=-\xi(t)\int_{\Gamma(t)}(\partial_s\mu)^2\,ds.
\end{equation}
Therefore, the extended form of \eqref{eqn:model_ani} is denoted by:
\begin{subequations}\label{eqn:model_ani3}
   \begin{align}\label{eqn:model_ani3a}
      &\partial_t \Vec{X}\cdot \Vec{n}=\partial_{ss}\mu,\\&\label{eqn:model_ani3b}
      \mu\Vec{n}=-\partial_s\left[\mat{B}(\theta)\partial_s \Vec{X}\right],\\& \label{eqn:model_ani3c}
      \frac{d}{dt}R(t)=-\xi(t)\int_{\Gamma(t)} (\partial_s \mu)^2\,ds.
   \end{align}
\end{subequations}
Then, the weak formulation of \eqref{eqn:model_ani3} is stated as follows: Given the initial curve $\Gamma(0):=\Vec{X}(\mathbb{I},0)\in \mathbb{X}$, find the closed curve $\Gamma(t)=\Vec{X}(\mathbb{I},t)\in \mathbb{X}$ and the chemical potential $\mu(\cdot,t)\in H^1(\mathbb{I})$ for $t>0$, such that 
\begin{subequations}\label{eqn:ani_weak}
      \begin{align}
          &\label{eqn:ani_weaka}(\partial_t\Vec{X}\cdot \Vec{n},\varphi)_{\Gamma(t)}+(\partial_s \mu,\partial_s\varphi)_{\Gamma(t)}=0, \qquad  \forall \varphi \in H^1(\mathbb{I}),\\&\label{eqn:ani_weakb}
          (\mu\Vec{n},\Vec{\omega})_{\Gamma(t)}-\left(\mat{B}(\theta)\partial_s\Vec{X},\partial_s\Vec{\omega}
          \right)_{\Gamma(t)}=0,\qquad \forall \Vec{\omega} \in \mathbb{X}.
      \end{align}
  \end{subequations}
Furthermore, the spatial semi-discrete scheme of \eqref{eqn:ani_spaceweak}: Given the initial curve $\Gamma^h(0):=\Vec{X}^h(\cdot,0)\in[V^h(\mathbb{I}]^2$, find the closed curve $\Gamma^h(t):=\Vec{X}^h(\cdot,t)$ and the chemical potential $\mu^h(\cdot,t)\in V^h(\mathbb{I})$, such that 
\begin{subequations}\label{eqn:ani_spaceweak}
      \begin{align}
&\label{eqn:ani_spaceweaka}\left(\partial_t\Vec{X}^h\cdot \Vec{n}^h,\varphi^h\right)_{\Gamma^h(t)}^h+\left(\partial_s \mu^h,\partial_s\varphi^h\right)_{\Gamma^h(t)}^h=0, \qquad  \forall \varphi^h \in V^h(\mathbb{I}),\\&\label{eqn:ani_spaceweakb}
          \left(\mu^h\Vec{n}^h,\Vec{\omega}^h\right)_{\Gamma^h(t)}^h-\left(\mat{B}(\theta^h)\partial_s\Vec{X}^h,\partial_s\Vec{\omega}^h
          \right)_{\Gamma^h(t)}^h=0,\qquad \forall \Vec{\omega}^h \in [V^h(\mathbb{I})]^2.
      \end{align}
\end{subequations}

\subsection{Numerical schemes}
We discretize the semi-discretization \eqref{eqn:ani_spaceweak} in time by using the methods in Section \ref{sec:isotropic}, and denote discrete energy $W^h(\Vec{X}^{m})$ of $\Gamma^m$ as
\begin{equation}\label{eqn:disc_ener}
    W^h(\Vec{X}^{m}):=\sum_{j=1}^{N}|\Vec{h}_j^m|\gamma(\theta^m_j),
\end{equation}
where $\theta_j^m$ is inclination angle of the curve $\Gamma^m$ on subinterval $\mathbb{I}_j$. We establish the following fully discrete numerical schemes.
 \begin{itemize}
\item {\textbf{BDF1-SAV for anisotropic SDF}}: Given $\Vec{X}^m\in V^h(\mathbb{I})$ and $R^m$, find $(\Vec{\overline{X}}^{m+1}, \overline{\mu}^{m+1})\in [V^h(\mathbb{I})]^2\times V^h(\mathbb{I})$, $\xi^{m+1}$, $R^{m+1}$ and $(\Vec{X}^{m+1}, \mu^{m+1})\in [V^h(\mathbb{I})]^2\times V^h(\mathbb{I})$, such that
\begin{subequations}\label{eqn:first_dis2}
\begin{align}\label{eqn:first_dis2a}
        &\left(\frac{\Vec{\overline{X}}^{m+1}-\Vec{X}^{m}}{\Delta t}\cdot \Vec{n}^{m},\varphi^h \right)_{\Gamma^m}^h + \left(\partial_s \overline{\mu}^{m+1},\partial_s\varphi^h \right)_{\Gamma^m}^h=0,\qquad 
        \forall \varphi^h\in V^h(\mathbb{I}),\\ \label{eqn:first_dis2b}
        &\left(\overline{\mu}^{m+1}\Vec{n}^m, \Vec{\omega}^h\right)_{\Gamma^m}^h-\left(\mat{B}(\theta^m)\partial_s\Vec{\overline{X}}^{m+1},\partial_s\Vec{\omega}^h\right)_{\Gamma^m}^h=0,\qquad 
        \forall \Vec{\omega}^h\in [V^h(\mathbb{I})]^2,\\&\label{eqn:first_dis2c}
        \frac{R^{m+1}-R^{m}}{\Delta t}=-\xi^{m+1}\left(\partial_s\overline{\mu}^{m+1},\partial_s\overline{\mu}^{m+1} \right)_{\overline{\Gamma}^{m+1}}^{h} \quad \text{with} \quad \xi^{m+1}=\frac{R^{m+1}}{W^h(\Vec{\overline{X}}^{m+1})},\\& \label{eqn:first_dis2d}
         \zeta^{m+1}=1-(1-\xi^{m+1})^r,\\&
         \label{eqn:first_dis2e}\Vec{X}^{m+1}=\zeta^{m+1}\Vec{\overline{X}}^{m+1}\quad \text{and} \quad \mu^{m+1} =\zeta^{m+1}\overline{\mu}^{m+1}.
    \end{align}
\end{subequations}
\item {\textbf{BDF1-CSAV for anisotropic SDF}}: Given $\Vec{X}^m\in V^h(\mathbb{I})$ and $R^m$, find $(\Vec{\overline{X}}^{m+1}, \overline{\mu}^{m+1})\in [V^h(\mathbb{I})]^2\times V^h(\mathbb{I})$, $\xi^{m+1}$, $R^{m+1}$ and $(\Vec{X}^{m+1}, \mu^{m+1})\in [V^h(\mathbb{I})]^2\times V^h(\mathbb{I})$, such that
\begin{subequations}\label{eqn:first_acdis2}
    \begin{align}\label{eqn:first_acdis2a}
        &\left(\frac{\Vec{\overline{X}}^{m+1}-\Vec{X}^{m}}{\Delta t}\cdot \Vec{\overline{n}}^{m+\frac{1}{2}},\varphi^h \right)_{\Gamma^m}^h + \left(\partial_s \overline{\mu}^{m+1},\partial_s\varphi^h \right)_{\Gamma^m}^h=0,\qquad 
        \forall \varphi^h\in V^h(\mathbb{I}),\\ \label{eqn:first_acdis2b}
        &\left(\overline{\mu}^{m+1}\Vec{\overline{n}}^{m+\frac{1}{2}}, \Vec{\omega}^h\right)_{\Gamma^m}^h-\left(\mat{B}(\theta^m)\partial_s\Vec{\overline{X}}^{m+1},\partial_s\Vec{\omega}^h\right)_{\Gamma^m}^h=0,\qquad 
        \forall \Vec{\omega}^h\in [V^h(\mathbb{I})]^2,\\&
        \frac{R^{m+1}-R^{m}}{\Delta t}=-\xi^{m+1}\left(\partial_s\overline{\mu}^{m+1},\partial_s\overline{\mu}^{m+1} \right)_{\overline{\Gamma}^{m+1}}^{h} \quad \text{with} \quad \xi^{m+1}=\frac{R^{m+1}}{W^h(\Vec{\overline{X}}^{m+1})},\\&
         \zeta^{m+1}=1-(1-\xi^{m+1})^r,\\&
         \Vec{X}^{m+1}=\zeta^{m+1}\Vec{\overline{X}}^{m+1}\quad \text{and} \quad \mu^{m+1} =\zeta^{m+1}\overline{\mu}^{m+1}.
    \end{align}
\end{subequations}
\item {\textbf{BDF2-SAV for anisotropic SDF}}: 
    Given $\Vec{X}^{m}\in V^h(\mathbb{I})$, $\Vec{X}^{m-1}\in V^h(\mathbb{I})$, and $R^{m}$, find $(\Vec{\overline{X}}^{m+1}, \overline{\mu}^{m+1})\in [V^h(\mathbb{I})]^2\times V^h(\mathbb{I})$, $\xi^{m+1}$, $R^{m+1}$ and $(\Vec{X}^{m+1}, \mu^{m+1})\in [V^h(\mathbb{I})]^2\times V^h(\mathbb{I})$ , such that :
    \begin{subequations}\label{eqn:sec_diff2}
    \begin{align}\label{eqn:sec_diff2a}
        &\left ( \frac{\frac{3}{2}\Vec{\overline{X}}^{m+1}-2\Vec{X}^{m}+\frac{1}{2}\Vec{X}^{m-1}}{\Delta t}\cdot \Vec{\tilde{n}}^{m+1},\varphi^{h}
         \right )_{\tilde{\Gamma}^{m+1}}^{h} +\left ( \partial_s \overline{\mu}^{m+1},\partial_s \varphi^{h}\right )_{\tilde{\Gamma}^{m+1}}^{h}=0, \qquad 
        \forall \varphi^h\in V^h(\mathbb{I}),\\&\label{eqn:sec_diff2b}
         \left ( {\overline{\mu}^{m+1}}\Vec{\tilde{n}}^{m+1},\Vec{\omega}^h \right )_{\tilde{\Gamma}^{m+1}}^{h}-\left ( \mat{B}(\tilde{\theta}^{m+1})\partial_s{\Vec{\overline{X}}}^{m+1},\partial_s\Vec{\omega}^h\right )_{\tilde{\Gamma}^{m+1}}^{h}=0, \qquad \forall \Vec{\omega}^h\in [V^h(\mathbb{I})]^2, \\&\label{eqn:sec_diff2c}\frac{R^{m+1}-R^{m}}{\Delta t}=-\xi^{m+1}\left (\partial_s\overline{\mu}^{m+1},\partial_s\overline{\mu}^{m+1}\right )_{\overline{\Gamma}^{m+1}}^{h}\quad \text{with}\quad
         \xi^{m+1}=\frac{R^{m+1}}{W^h(\Vec{\overline{X}}^{m+1})},  
         \\& \label{eqn:sec_diff2d}
         \zeta^{m+1}=1-(1-\xi^{m+1})^r, \\&
         \label{eqn:sec_diff2e}\Vec{X}^{m+1}=\zeta^{m+1}\Vec{\overline{X}}^{m+1}\quad \text{and} \quad \mu^{m+1}=\zeta^{m+1}\overline{\mu}^{m+1}.
    \end{align}
\end{subequations}
\end{itemize}
\subsection{The properties}
For the proposed numerical schemes, we demonstrate that the modified energy is unconditionally stable, and the original energy remains unconditionally bounded. 
In addition, it has been established through rigorous proof that the scheme \eqref{eqn:first_acdis2} maintains approximately area-conservation.
\begin{thm} \label{thm:ener}
For the schemes \eqref{eqn:first_dis2}, \eqref{eqn:first_acdis2} and \eqref{eqn:sec_diff2}, 
given $R^m\ge 0$, we can obtain $\xi^{m+1}\ge0$ and $R^{m+1}\ge 0$.  
Furthermore, there holds
    \begin{equation}\label{eqn:ener_S}
        R^{m+1}-R^{m}=-\Delta t \xi^{m+1}(\partial_s\overline{\mu}^{m+1},\partial_s\overline{\mu}^{m+1})_{\overline{\Gamma}^{m+1}}^{h}\le 0, 
    \end{equation}
i.e., the schemes are unconditionally energy-stable in the sense of a modified energy. 

Additionally, if $\zeta^{m+1}\ge 0$, there exists a bounded and positive constant $M_r$ that depends on $r$, such that 
\begin{equation}\label{eqn:num_bound}
    W^h(\Vec{X}^{m+1})\le M_r,\qquad \forall m\ge0.
\end{equation}
If $\zeta^{m+1}<0$, to make \eqref{eqn:num_bound} hold, it needs to satisfy that $\gamma(\theta)=\gamma(\theta+\pi)$.
\begin{proof}
    According to \eqref{eqn:first_dis2c}, $\xi^{m+1}$ can be expressed as
    \begin{equation}\label{eqn:xi}
        \xi^{m+1}= \frac{R^m}{W^h(\Vec{\overline{X}}^{m+1})+\Delta t(\partial_s\overline{\mu}^{m+1},\partial_s\overline{\mu}^{m+1})_{\overline{\Gamma}^{m+1}}^h}.
    \end{equation}
    Therefore, if $R^m\ge 0$, we can obtain $\xi^{m+1}\ge 0$. Since $R^{m+1}=\xi^{m+1}W^h(\Vec{\overline{X}}^{m+1})$, we can directly get $R^{m+1}\ge 0$. $\xi^{m+1}\ge 0$ also implies that \eqref{eqn:ener_S} holds.
    
    Let's denote $M:=R^0=W^h(\Vec{X}^0)$, then the energy stability implies that $R^m\le \cdots \le R^0=M$. Moreover, from \eqref{eqn:xi}, we have
    \begin{equation}\label{eqn:xi_bound}
        \xi^{m+1}\le\frac{M}{W^h(\Vec{\overline{X}}^{m+1})+\Delta t(\partial_s\overline{\mu}^{m+1},\partial_s\overline{\mu}^{m+1})_{\overline{\Gamma}^{m+1}}^h}.
    \end{equation}
     From \eqref{eqn:first_dis2d}, one has $\zeta^{m+1}=P_r(\xi^{m+1})\xi^{m+1}$, where $P_r$ is a polynomial of degree $r$. Then, according to \eqref{eqn:xi_bound}, there must exist a bound constant $M_r>0$ satisfying that  
    \begin{equation}\label{eqn:eta_bound}
        \big|\zeta^{m+1}\big|=\big|P_r(\xi^{m+1}) \xi^{m+1}\big| \le \frac{M_r}{W^h(\Vec{\overline{X}}^{m+1})+\Delta t(\partial_s \overline{\mu}^{m+1},\partial_s \overline{\mu}^{m+1})_{\overline{\Gamma}^{m+1}}^h}.
    \end{equation} 
From \eqref{eqn:first_dis2e}, we obtain
   \begin{equation}\label{eqn:relate1}
  \big|\Vec{h}_j^{m+1}\big|=\big|\Vec{X}^{m+1}(\rho_j)-\Vec{X}^{m+1}(\rho_{j-1})\big|=\big|\zeta^{m+1}\big| \big|\Vec{\overline{h}}_j^{m+1}\big|.
   \end{equation}
The unit normal vector $\Vec{n}^m$ can be computed by $\Vec{n}^m\big|_{\mathbb{I}_j}=\frac{(\Vec{h}_j^m)^{\bot}}{\big|\Vec{h}_j^m\big|}$ on each interval $\mathbb{I}_j$. Noticing \eqref{eqn:relate1}, we get
\begin{equation}\label{eqn:relate2}
\Vec{n}^{m+1}\big|_{\mathbb{I}_j}=\frac{\zeta^{m+1}(\Vec{\overline{h}}_j^{m+1})^{\bot}}{\big|\zeta^{m+1}\big|\big|\Vec{\overline{h}}_j^{m+1}\big|}=\frac{\zeta^{m+1}}{\big|\zeta^{m+1}\big|}\Vec{\overline{n}}^{m+1}\big|_{\mathbb{I}_j}.
\end{equation}
Therefore, based on the sign of $\zeta^{m+1}$, the relationship between $\Vec{{n}}^{m+1}$ and $\Vec{\overline{n}}^{m+1}$
has two cases
\begin{equation}\label{eqn:relate_n}
   \Vec{n}^{m+1}=
    \begin{cases}
        \Vec{\overline{n}}^{m+1},\qquad &\zeta^{m+1}\ge 0,\\ -\Vec{\overline{n}}^{m+1},\qquad &\zeta^{m+1} <0.
    \end{cases}
\end{equation}
Due to $\theta^{m+1}$ is the angle between $\Vec{n}^{m+1}$ and the $y$-axis, we have
\begin{equation}\label{eqn:relate_theta}
   \theta^{m+1}=
    \begin{cases}
        \overline{\theta}^{m+1},\qquad &\zeta^{m+1}\ge 0,\\ \overline{\theta}^{m+1}+\pi,\qquad &\zeta^{m+1} <0.
    \end{cases}
\end{equation}
Hence, when $\zeta^{m+1}\ge 0$, by definition of $W^h(\Vec{X}^m)$ , we can directly get 
\begin{equation}\label{eqn:ener_relate}
    W^h(\Vec{X}^{m+1})=|\zeta^{m+1}|W^h(\Vec{\overline{X}}^{m+1}).
\end{equation}
If $\zeta^{m+1}<0$, the validity of \eqref{eqn:ener_relate} needs to satisfy that $\gamma(\overline{\theta}^{m+1})=\gamma(\overline{\theta}^{m+1}+\pi)$. 
Substituting \eqref{eqn:eta_bound} into \eqref{eqn:ener_relate} gives that 
\begin{equation}\label{eqn:bound}
    W^h(\Vec{X}^{m+1})\le \frac{M_r}{W^h(\Vec{\overline{X}}^{m+1})+\Delta t\left(\partial_s \overline{X}^{m+1}, \partial_s \overline{X}^{m+1}\right)_{\overline{\Gamma}^{m+1}}^h} W^h(\Vec{\overline{X}}^{m+1})\le M_r.
\end{equation}
Therefore, the boundedness of the original energy has been proven.
\end{proof}
\end{thm}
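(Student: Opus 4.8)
The plan is to treat the three schemes \eqref{eqn:first_dis2}, \eqref{eqn:first_acdis2} and \eqref{eqn:sec_diff2} by one argument, since the only relations entering the statement---the scalar energy update (e.g.\ \eqref{eqn:first_dis2c}), the definition $\xi^{m+1}=R^{m+1}/W^h(\Vec{\overline{X}}^{m+1})$, the rule \eqref{eqn:first_dis2d} for $\zeta^{m+1}$, and the rescaling \eqref{eqn:first_dis2e}---take the identical algebraic form in all three cases. The schemes differ only in the base curve ($\Gamma^m$ versus $\tilde{\Gamma}^{m+1}$) and the normal employed, neither of which affects this scalar bookkeeping.

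I would first prove the sign assertions. Inserting $R^{m+1}=\xi^{m+1}W^h(\Vec{\overline{X}}^{m+1})$ into the energy update and solving for $\xi^{m+1}$ gives
$$\xi^{m+1}=\frac{R^m}{W^h(\Vec{\overline{X}}^{m+1})+\Delta t\,(\partial_s\overline{\mu}^{m+1},\partial_s\overline{\mu}^{m+1})_{\overline{\Gamma}^{m+1}}^{h}}.$$
Because $\gamma>0$ the discrete energy $W^h(\Vec{\overline{X}}^{m+1})=\sum_j|\Vec{h}_j|\gamma(\theta_j)$ is nonnegative and the mass-lumped term is a squared seminorm, so the denominator is strictly positive; hence $R^m\ge0$ forces $\xi^{m+1}\ge0$, and then $R^{m+1}=\xi^{m+1}W^h(\Vec{\overline{X}}^{m+1})\ge0$. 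The modified-energy dissipation \eqref{eqn:ener_S} is immediate, since its right-hand side is $-\Delta t$ times the product of the nonnegative $\xi^{m+1}$ and a nonnegative seminorm, giving $R^{m+1}\le R^m$.

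For \eqref{eqn:num_bound} I would use this monotonicity to fix $R^m\le R^0=:M$, and factor $\zeta^{m+1}=\xi^{m+1}S(\xi^{m+1})$ with $S(\xi)=\sum_{k=0}^{r-1}(1-\xi)^k$. The geometric input is that the rescaling $\Vec{X}^{m+1}=\zeta^{m+1}\Vec{\overline{X}}^{m+1}$ scales every edge as $|\Vec{h}_j^{m+1}|=|\zeta^{m+1}|\,|\Vec{\overline{h}}_j^{m+1}|$ while leaving the inclination angle unchanged when $\zeta^{m+1}\ge0$ and shifting it by $\pi$ (the normal flips) when $\zeta^{m+1}<0$. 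Thus $W^h(\Vec{X}^{m+1})=|\zeta^{m+1}|W^h(\Vec{\overline{X}}^{m+1})$ holds automatically for $\zeta^{m+1}\ge0$, and for $\zeta^{m+1}<0$ exactly when $\gamma(\theta)=\gamma(\theta+\pi)$---which is where that hypothesis enters. Combining this with $|\zeta^{m+1}|=\xi^{m+1}|S(\xi^{m+1})|$ and $\xi^{m+1}W^h(\Vec{\overline{X}}^{m+1})=R^{m+1}\le M$ collapses the claim to $W^h(\Vec{X}^{m+1})=R^{m+1}|S(\xi^{m+1})|\le M\,|S(\xi^{m+1})|$, so it remains only to bound the polynomial factor $|S(\xi^{m+1})|$ by a constant $M_r$ depending on $r$.

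I expect this last bound to be the main obstacle. Since $S$ has degree $r-1$, $|S(\xi^{m+1})|$ is controlled precisely when $\xi^{m+1}$ stays in a bounded neighborhood of $1$, i.e.\ when $W^h(\Vec{\overline{X}}^{m+1})$ does not collapse relative to $R^{m+1}$; the key is to exploit that $\xi^{m+1}$ is a first-order-consistent approximation of $1$ (cf.\ Remark \ref{rem:order_r}), which keeps $|S(\xi^{m+1})|$ near $r$ and yields $M_r\approx rM$. The genuinely geometric ingredient---that a negative $\zeta^{m+1}$ reverses the unit normal and shifts the inclination angle by $\pi$---is what makes the symmetry $\gamma(\theta)=\gamma(\theta+\pi)$ unavoidable in the $\zeta^{m+1}<0$ branch, and keeping the two sign branches separate throughout is the part that demands the most care.
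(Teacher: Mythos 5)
Your argument is essentially the paper's own proof: the same explicit formula \eqref{eqn:xi} for $\xi^{m+1}$, the same sign and dissipation conclusions, the same edge-scaling and normal-flip geometry giving $W^h(\Vec{X}^{m+1})=|\zeta^{m+1}|\,W^h(\Vec{\overline{X}}^{m+1})$ with the symmetry $\gamma(\theta)=\gamma(\theta+\pi)$ entering exactly in the $\zeta^{m+1}<0$ branch, and the same factorization of $\zeta^{m+1}$ as $\xi^{m+1}$ times a degree-$(r-1)$ polynomial. The step you single out as the main obstacle --- bounding the polynomial factor $|S(\xi^{m+1})|$, which requires $\xi^{m+1}$ to stay in a bounded set --- is precisely the step the paper also passes over by simply asserting the existence of $M_r$ in \eqref{eqn:eta_bound}, so your rearrangement $W^h(\Vec{X}^{m+1})=R^{m+1}|S(\xi^{m+1})|\le M\,|S(\xi^{m+1})|$ is a cosmetic variant at the same level of rigor as the published argument.
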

\begin{rem}
    Theorem \ref{thm:propertiesofiso} is indeed a special case of Theorem \ref{thm:ener}. Otherwise, $\gamma(\theta)=\gamma(\theta+\pi)$ is just the condition for ensuring the symmetric positive definition of the matrix $\mat{B}(\theta)$. From this perspective, $\gamma(\theta)=\gamma(\theta+\pi)$ is a very mild condition.
\end{rem}
We denote the total enclosed area $A^h(\Vec{X}^m)$ of $\Gamma^m$ as
\begin{equation}\label{eqn:disc_area}
    A^h(\Vec{X}^m):=\frac{1}{2}\sum_{j=1}^{N}{(x_j^m-x_{j-1}^{m})}{(y_{j-1}^{m}+y_{j}^m)}.
\end{equation}
We are able to prove the approximately area-conservation of \eqref{eqn:first_acdis2}.
\begin{thm}(\textbf{Approximately area-conservation}) \label{thm:Quasi-ac}
Let $\left(\Vec{X}^{m+1}(\cdot),\kappa^{m+1}(\cdot)\right)$ represent numerical solution of the scheme \eqref{eqn:first_acdis2}. Then, it holds the following property:
\begin{equation}\label{eqn:quasi_area}
        A^h\left(\Vec{X}^{m+1}\right)-A^h\left(\Vec{X}^m\right)= O(\Delta t^r),\qquad m\ge 0.
    \end{equation}
    \end{thm}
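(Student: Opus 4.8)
The plan is to pass through the predictor curve $\overline{\Gamma}^{m+1}$ and to exploit two structural properties of the discrete area functional $A^h$ in \eqref{eqn:disc_area}: it is a homogeneous quadratic form in the vertex positions, and its first variation along a straight-line motion of the vertices is reproduced \emph{exactly} by the symmetric normal $\Vec{\overline{n}}^{m+\frac12}$. Concretely, I would split
\begin{equation*}
A^h(\Vec{X}^{m+1}) - A^h(\Vec{X}^m) = \big[A^h(\Vec{X}^{m+1}) - A^h(\Vec{\overline{X}}^{m+1})\big] + \big[A^h(\Vec{\overline{X}}^{m+1}) - A^h(\Vec{X}^m)\big],
\end{equation*}
and handle the two brackets by different mechanisms: the second bracket via an exact area conservation of the predictor, the first via the scaling $\Vec{X}^{m+1} = \zeta^{m+1}\Vec{\overline{X}}^{m+1}$ together with homogeneity.

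First I would establish the key identity
\begin{equation*}
A^h(\Vec{\overline{X}}^{m+1}) - A^h(\Vec{X}^m) = -\big((\Vec{\overline{X}}^{m+1}-\Vec{X}^m)\cdot \Vec{\overline{n}}^{m+\frac12},\, 1\big)_{\Gamma^m}^h,
\end{equation*}
where the overall sign is fixed by the orientation convention in $(\cdot)^\bot$ and by the minus sign in the definition of $\Vec{\overline{n}}^{m+\frac12}$. Since $A^h$ is quadratic in the vertices, the map $t\mapsto A^h((1-t)\Vec{X}^m+t\Vec{\overline{X}}^{m+1})$ is a quadratic polynomial, so the increment equals its derivative at $t=\tfrac12$, i.e.\ the first variation of $A^h$ evaluated at the midpoint configuration $\tfrac12(\Vec{X}^m+\Vec{\overline{X}}^{m+1})$ (this is the exact midpoint rule $f(1)-f(0)=f'(1/2)$ for quadratics). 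Differentiating the shoelace expression \eqref{eqn:disc_area} gives the vertex gradient $\nabla_{\Vec{X}_j}A^h=\tfrac12\big[(\Vec{h}_j)^\bot+(\Vec{h}_{j+1})^\bot\big]$, and at the midpoint each edge vector is $\tfrac12(\Vec{h}_j^m+\Vec{\overline{h}}_j^{m+1})$. After a shift of the periodic summation index (discrete summation by parts) this matches term by term the mass-lumped pairing on the right, because $\Vec{\overline{n}}^{m+\frac12}\big|_{\mathbb{I}_j}=-\tfrac{(\Vec{h}_j^m+\Vec{\overline{h}}_j^{m+1})^\bot}{2|\Vec{h}_j^m|}$ and the factor $|\Vec{h}_j^m|$ cancels exactly against the weight in \eqref{eqn:spaceinnerp}. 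Taking $\varphi^h\equiv 1$ in \eqref{eqn:first_acdis2a} annihilates the $\partial_s\overline{\mu}^{m+1}$ term and yields $\big((\Vec{\overline{X}}^{m+1}-\Vec{X}^m)\cdot\Vec{\overline{n}}^{m+\frac12},1\big)_{\Gamma^m}^h=0$, so the identity forces $A^h(\Vec{\overline{X}}^{m+1})=A^h(\Vec{X}^m)$ exactly.

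For the first bracket I would use homogeneity of degree two: since $A^h(\lambda\Vec{X})=\lambda^2 A^h(\Vec{X})$ and $\Vec{X}^{m+1}=\zeta^{m+1}\Vec{\overline{X}}^{m+1}$, one gets $A^h(\Vec{X}^{m+1})=(\zeta^{m+1})^2 A^h(\Vec{\overline{X}}^{m+1})$, whence, combining with the predictor conservation,
\begin{equation*}
A^h(\Vec{X}^{m+1})-A^h(\Vec{X}^m)=\big[(\zeta^{m+1})^2-1\big]\,A^h(\Vec{X}^m).
\end{equation*}
It then remains to estimate $\zeta^{m+1}$. By Remark~\ref{rem:order_r} the energy update in \eqref{eqn:first_acdis2} makes $\xi^{m+1}$ a first-order approximation of $1$, i.e.\ $\xi^{m+1}=1+O(\Delta t)$; hence $1-\zeta^{m+1}=(1-\xi^{m+1})^r=O(\Delta t^r)$ and $(\zeta^{m+1})^2-1=O(\Delta t^r)$. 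Finally $A^h(\Vec{X}^m)$ is uniformly bounded in $m$: by Theorem~\ref{thm:ener} the original energy satisfies $W^h(\Vec{X}^m)\le M_r$, and since $\gamma$ is bounded below by a positive constant this controls the discrete perimeter $L^h(\Vec{X}^m)$, hence (isoperimetrically) the enclosed area. This gives \eqref{eqn:quasi_area}.

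I expect the main obstacle to be the key identity of the second step: matching the first variation of the shoelace area against the mass-lumped inner product with $\Vec{\overline{n}}^{m+\frac12}$ requires getting the vertex-gradient computation, the summation-by-parts on the periodic index, and the orientation/sign bookkeeping all exactly right. It is precisely the quadratic (bilinear) structure of $A^h$—so that the midpoint derivative rule carries no remainder—that upgrades the predictor's area balance from $O(\Delta t^2)$ to an \emph{exact} equality, after which the only remaining error enters through the scalar factor $\zeta^{m+1}$ and is of the claimed order $O(\Delta t^r)$.
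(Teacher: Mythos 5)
Your proof is correct and follows essentially the same route as the paper: the same splitting through the predictor $\Vec{\overline{X}}^{m+1}$, the same constant-test-function argument in \eqref{eqn:first_acdis3a}-type equation to kill the chemical-potential term and obtain exact area conservation of the predictor, and the same degree-two homogeneity of the shoelace formula to reduce the remaining error to $(\zeta^{m+1})^2-1=O(\Delta t^r)$. The only differences are that you prove the first-variation identity directly via the exact midpoint rule for quadratics (the paper simply cites Theorem 2.1 of \cite{Bao17}) and that you bound $A^h(\Vec{X}^m)$ via the energy bound and the isoperimetric inequality, which is actually slightly more careful than the paper's assertion $A^h(\Vec{\overline{X}}^{m+1})=A^h(\Vec{X}^0)$.
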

    \begin{proof}
       The approximate solution $\Gamma^h(\alpha)$ can be defined through a linear interpolation of $\Vec{\overline{X}}^{m+1}$ and $\Vec{X}^m$:
       \begin{equation}\label{eqn:interpolation}
           \Vec{X}^h(\rho,\alpha):=(1-\alpha)\Vec{X}^m(\rho)+\alpha \Vec{\overline{X}}^{m+1}(\rho),\qquad 0\le \rho \le 1,\text{ } 0 \le \alpha \le 1.
       \end{equation}
Then, by applying Theorem 2.1 in  \cite{Bao17} and setting $\varphi^h=\Delta t$ in \eqref{eqn:first_acdis2a}, we directly obtain 
   \begin{equation}
     \begin{aligned}
         A(1)-A(0)&= \int_{\mathbb{I}} [\Vec{\overline{X}}^{m+1}-\Vec{X}^m]\cdot \left[-\frac{1}{2}\left(\partial_\rho \Vec{X}^m+\partial_\rho \Vec{\overline{X}}^{m+1}\right)\right]^{\bot}\,d\rho\\ & 
         =\left((\Vec{\overline{X}}^{m+1}-\Vec{X})\cdot \Vec{\overline{n}}^{m+\frac{1}{2}},1\right)_{\Gamma^m}^h \equiv 0.
       \end{aligned}
   \end{equation} 
Therefore, we obtain $A(1)-A(0)=0$, where directly implies that $A^h(\Vec{\overline{X}}^{m+1})-A^h(\Vec{X}^m)=0$.
By the definition of \eqref{eqn:ener_fun}, we obtain
\begin{equation}
    A^h(\Vec{X}^{m+1}) - A^h(\Vec{\overline{X}}^{m+1})= (\zeta^2-1) A^h(\Vec{\overline{X}}^{m+1}) = (\zeta^2-1)A^h(\Vec{X}^{0}) =O(\Delta t^r).
\end{equation}
Hence, we have $A^h\left(\Vec{X}^{m+1}\right)-A^h\left(\Vec{X}^m\right)= O(\Delta t^r)$.
 \end{proof}    
\section{Extension to SSD} \label{sec:SSD}
 In this section, we extend the mentioned schemes to the SSD of thin films with anisotropic surface energy.
 The issue of SSD involves the study of the evolution of an open curve $\Gamma:=\Gamma(t)$ with two contact points $x_c^l:=x_c^l(t)$ and $x_c^r:=x_c^r(t)$ moving along the rigid flat substrate. We denote $\Gamma(t):=\Vec{X}(\rho,t)=(x(s,t),y(s,t))^T$ with $0\le s \le L(t).$
 When $0<s<L(t)$, the relationship \eqref{eqn:modela_ani2}-\eqref{eqn:modelb_ani2} still holds. Different from the system \eqref{eqn:is_model1}, the initial curve is given as
 \begin{equation}\label{eqn:SSD_initial}
     \Vec{X}(s,0):=\Vec{X}_0(s)=(x(s,0),y(s,0))^T=(x_0(s),y_0(s))^T, \quad, 0\le s \le L_0:=L(0),
 \end{equation}
satisfying $y_0(0)=y_0(L_0)=0$ and $x_0(0)<x_0(L_0)$. In addition, it also needs to satisfy the following boundary conditions: 

(i) contract line condition
\begin{equation}\label{eqn:boundary_1}
    y(0,t)=y(L,t)=0,\qquad t\ge0;
\end{equation}

(ii) relaxed contact angle conditions
\begin{equation}\label{eqn:boundary_2}
    \frac{dx_c^l(t)}{dt}=\eta f(\theta_d^l;\sigma),\qquad \frac{dx_c^r(t)}{dt}=-\eta f(\theta_d^r;\sigma),\qquad t\ge0; 
\end{equation}
where $f(\theta;\sigma)$ is defined as follows: 
\begin{equation}\label{eqn:SSD_f}
    f(\theta;\sigma)=\gamma(\theta)\cos(\theta)-{\gamma}'(\theta)sin(\theta)-\sigma, \qquad \theta \in [-\pi,\pi], \qquad \sigma=\frac{\gamma_{VS}-\gamma_{FS}}{\gamma_{FV}}.
\end{equation}
The definition of material constant $\sigma$ with $\gamma_{VS}$, $\gamma_{FS}$ and $\gamma_{FV}$ respectively represent surface energy densities of the vapor/substrate, film/substrate and film/vapor.

(iii) zero-mass flux condition
\begin{equation}\label{eqn:boundary_3}
    \partial_s \mu(0,t)=0, \qquad \partial_s \mu(L,t)=0, \qquad t\ge0.
\end{equation}
\begin{rem}
    In the boundary conditions mentioned above, condition (i) is to ensure that the two contact points always move along  the flat substrate, condition (ii) allows for the relaxation of the contact angle, and condition (iii) indicates that there is no-mass flux at the contact points, ensuring that the total area/mass of the thin film is conserved.
\end{rem}

Define the total free energy  of the system $W(t)$ as
\begin{equation} \label{eqn:ener_SSD}
    W(t)=\int_{\Gamma(t)} \gamma(\theta)\, ds -(x^r_c(t)-x^l_c(t))\sigma, \qquad t\ge0.
\end{equation}
We can directly prove that
\begin{equation}\label{eqn:SSD_ener_dec}
 \frac{d}{dt}A(t)=0, \qquad
    \frac{d}{dt} W(t) = -\int_{\Gamma(t)} (\partial_s \mu)^2 \,ds -\frac{1}{\eta}\left[(\frac{dx_c^l}{dt})^2+(\frac{dx_c^r}{dt})^2\right]\le 0, \qquad \forall t \ge 0,
\end{equation}
which indicates that the SSD also satisfies the two geometric properties: area conservation and energy dissipation. 
\subsection{The new formulation}
Similarly, the introduced auxiliary variables are as follows:
\begin{equation}\label{eqn:SSD_aux}
    R(t)=W(t) \qquad \text{and} \qquad \xi(t)=\frac{R(t)}{W(t)}\equiv 1.
\end{equation}
Therefore, by the definition of $R(t)$, one has
\begin{equation} \label{eqn:SSD_diffR}
   \frac{d}{dt}R(t)= -\xi(t)\left(\int_{\Gamma(t)} (\partial_s \mu)^2\,ds+\frac{1}{\eta}\left[\left(\frac{dx_c^l}{dt}\right)^2+\left(\frac{dx_c^r}{dt}\right)^2\right]\right).
\end{equation}

Combined with \eqref{eqn:SSD_diffR}, we have the following new formulation of SSD:
\begin{subequations}
     \begin{align}\label{eqn:SSD_new}
      &\partial_t \Vec{X}\cdot \Vec{n}=\partial_{ss}\mu,\\&\label{eqn:SSD_newa}
      \mu\Vec{n}=-\partial_s\left[\mat{B}(\theta)\partial_s \Vec{X}\right],\\& \label{eqn:SSD_newb}
      \frac{d}{dt}R(t)=-\xi(t)\left(\int_{\Gamma(t)} (\partial_s \mu)^2\,ds+\frac{1}{\eta}\left[\left(\frac{dx_c^l}{dt}\right)^2+\left(\frac{dx_c^r}{dt}\right)^2\right]\right).
      \end{align}
\end{subequations}
Then, we derive the weak formulation of \eqref{eqn:SSD_new} with boundary conditions \eqref{eqn:boundary_1}-\eqref{eqn:boundary_3} and initial condition \eqref{eqn:SSD_initial} as follows: Given the initial curve $\Gamma(0)=\Vec{X}(\mathbb{I},0)\in \mathbb{X}$ with $\Vec{X}(\rho,0)=\Vec{X}_0(L_0 \rho)=\Vec{X}_0(s)$ and set $x_c^l(0)=x_0(s=0)<x_c^r(0)=x_0(s=L_0)$, find the evolution curve $\Gamma(t):=\Vec{X}(\cdot,t)\in \mathbb{X}$, and the chemical potential $\mu(\cdot,t)\in H^1(\mathbb{I})$ for $t>0$, such that
\begin{subequations}\label{SSD_weak}
    \begin{align}
        & \label{SSD_weaka}\left(\partial_t \Vec{X}\cdot \Vec{n},\varphi \right)_{\Gamma(t)}+\left(\partial_s \mu,\partial_s\varphi\right)_{\Gamma(t)}=0, \qquad \forall \varphi \in H^1(\mathbb{I}), \\
        & \label{SSD_weakb} \notag \left(\mu \Vec{n},\Vec{\omega} \right)_{\Gamma(t)}-\left(B(\theta)\partial_s \Vec{X},\partial_s \Vec{\omega} \right)_{\Gamma(t)}-\frac{1}{\eta}\left[\frac{dx_c^l(t)}{dt}\omega_1(0)+\frac{dx_c^r(t)}{dt}\omega_0(1)\right] \\&  +\sigma\left[\omega_1(1)-\omega_1(0) \right]=0, \qquad \forall \Vec{\omega}=(\omega_1,\omega_2)^T\in \mathbb{X}.
    \end{align}
\end{subequations}
With the above formulation, we further present the spatial semi-discrete scheme as: Given the initial curve $\Gamma^h(0)=\Vec{X}^h(\cdot,0)$ and set $x_c^l(0)=x_0(s=0)<x_c^r(0)=x_0(L_0)$, find the evolution curve $\Gamma(t)=\Vec{X}^h(\cdot,t)=\left(x^h(\cdot,t),y^h(\cdot,t)\right)^T \in [V^h(\mathbb{I})]^2$ and the chemical potential  $\mu^h(\cdot,t)\in V^h(\mathbb{I})$, such that
\begin{subequations}\label{SSD_spaceweak}
    \begin{align}
        & \label{SSD_spaceweaka}\left(\partial_t \Vec{X}^h\cdot \Vec{n}^h,\varphi^h \right)_{\Gamma^h(t)}^h+\left(\partial_s \mu^h,\partial_s\varphi^h\right)_{\Gamma^h(t)}^h=0, \qquad \forall \varphi^h \in V^h(\mathbb{I}), \\
        & \label{SSD_spaceweakb} \notag \left(\mu^h \Vec{n}^h,\Vec{\omega}^h \right)_{\Gamma^h(t)}^h-\left(B(\theta^h)\partial_s \Vec{X}^h,\partial_s \Vec{\omega}^h \right)_{\Gamma^h(t)}^h-\frac{1}{\eta}\left[\frac{dx_c^l(t)}{dt}\omega^h_1(0)+\frac{dx_c^r(t)}{dt}\omega^h_0(1)\right] \\&  +\sigma\left[\omega^h_1(1)-\omega^h_1(0) \right]=0, \qquad \forall \Vec{\omega}^h=(\omega^h_1,\omega^h_2)^T\in [V^h(\mathbb{I})]^2,
    \end{align}
\end{subequations}
where $x_c^l(t)=x^h(\rho_0=0,t) \le x_c^r(t)=x^h(\rho_N=1,t)$.
\subsection{Numerical schemes}
Define the discrete energy of $\Gamma^{m}$ as
\begin{equation}\label{eqn:SSD_discrete_ener}
   W^h(\Vec{X}^m):= \sum_{j=1}^{N}\big|\Vec{h}_j^m\big|  \gamma(\theta^m_j)-(x^{m}_r-x^{m}_l)\sigma.   
 \end{equation}
The mentioned fully-discrete numerical schemes can be extended to SSD, including 
\begin{itemize}
    \item \textbf{BDF1-SAV for SSD}: Given $\Vec{X}^m\in V^h(\mathbb{I})$ and $R^m$, find $(\Vec{\overline{X}}^{m+1}, \overline{\mu}^{m+1})\in [V^h(\mathbb{I})]^2\times V^h(\mathbb{I})$, $\xi^{m+1}$, $R^{m+1}$ and $(\Vec{X}^{m+1}, \mu^{m+1})\in [V^h(\mathbb{I})]^2\times V^h(\mathbb{I})$ such that
\begin{subequations}\label{eqn:first_dis3}
\begin{align}\label{eqn:first_dis3a}
        &\left(\frac{\Vec{\overline{X}}^{m+1}-\Vec{X}^{m}}{\Delta t}\cdot \Vec{n}^{m},\varphi^h \right)_{\Gamma^m}^h + \left(\partial_s \overline{\mu}^{m+1},\partial_s\varphi^h \right)_{\Gamma^m}^h=0,\qquad 
        \forall \varphi^h\in V^h(\mathbb{I}),\\ \label{eqn:first_dis3b} \notag
        &\left(\overline{\mu}^{m+1}\Vec{n}^m, \Vec{\omega}^h\right)_{\Gamma^m}^h-\left(\mat{B}\left(\theta^m\right)\partial_s\Vec{\overline{X}}^{m+1},\partial_s\Vec{\omega}^h\right)_{\Gamma^m}^h +\sigma[\omega_1^h(1)-\omega_1^h(0)]\\ &-\frac{1}{\eta}\left[\frac{\overline{x}_l^{m+1}-x_l^m}{\Delta t}\omega^h_1(0)+\frac{\overline{x}_r^{m+1}-x_r^m}{\Delta t}\omega^h_1(1)\right]=0 ,\qquad 
        \forall \Vec{\omega}^h\in [V^h(\mathbb{I})]^2,\\&\label{eqn:first_dis3c} \notag
        \frac{R^{m+1}-R^{m}}{\Delta t}=-\xi^{m+1}\left[\left(\partial_s\overline{\mu}^{m+1},\partial_s\overline{\mu}^{m+1} \right)_{\overline{\Gamma}^{m+1}}^{h}+\frac{1}{\eta}\left(\left(\frac{\overline{x}_l^{m+1}-x_l^{m}}{\Delta t }\right)^2+\left(\frac{\overline{x}_r^{m+1}-x_r^{m}}{\Delta t }\right)^2\right)\right] \\ & \quad \text{with} \quad \xi^{m+1}=\frac{R^{m+1}}{W^h(\Vec{\overline{X}}^{m+1})},\\& \label{eqn:first_dis3d}
         \zeta^{m+1}=1-(1-\xi^{m+1})^r,\\&
         \label{eqn:first_dis3e}\Vec{X}^{m+1}=\zeta^{m+1}\Vec{\overline{X}}^{m+1}\quad \text{and} \quad \mu^{m+1} =\zeta^{m+1}\overline{\mu}^{m+1}.
    \end{align}
\end{subequations}
\item \textbf{BDF1-CSAV for SSD}: Given $\Vec{X}^m\in V^h(\mathbb{I})$ and $R^m$, find $(\Vec{\overline{X}}^{m+1}, \overline{\mu}^{m+1})\in [V^h(\mathbb{I})]^2\times V^h(\mathbb{I})$, $\xi^{m+1}$, $R^{m+1}$ and $(\Vec{X}^{m+1}, \mu^{m+1})\in [V^h(\mathbb{I})]^2\times V^h(\mathbb{I})$ such that
\begin{subequations}\label{eqn:first_acdis3}
\begin{align}\label{eqn:first_acdis3a}
        &\left(\frac{\Vec{\overline{X}}^{m+1}-\Vec{X}^{m}}{\Delta t}\cdot \Vec{\overline{n}}^{m+\frac{1}{2}},\varphi^h \right)_{\Gamma^m}^h + \left(\partial_s \overline{\mu}^{m+1},\partial_s\varphi^h \right)_{\Gamma^m}^h=0,\qquad 
        \forall \varphi^h\in V^h(\mathbb{I}),\\ \label{eqn:first_acdis3b} \notag
        &\left(\overline{\mu}^{m+1}\Vec{\overline{n}}^{m+\frac{1}{2}}, \Vec{\omega}^h\right)_{\Gamma^m}^h-\left(\mat{B}\left(\theta^m\right)\partial_s\Vec{\overline{X}}^{m+1},\partial_s\Vec{\omega}^h\right)_{\Gamma^m}^h +\sigma[\omega_1^h(1)-\omega_1^h(0)]\\ &-\frac{1}{\eta}\left[\frac{\overline{x}_l^{m+1}-x_l^m}{\Delta t}\omega^h_1(0)+\frac{\overline{x}_r^{m+1}-x_r^m}{\Delta t}\omega^h_1(1)\right]=0 ,\qquad 
        \forall \Vec{\omega}^h\in [V^h(\mathbb{I})]^2,\\&\label{eqn:first_acdis3c} \notag
        \frac{R^{m+1}-R^{m}}{\Delta t}=-\xi^{m+1}\left[\left(\partial_s\overline{\mu}^{m+1},\partial_s\overline{\mu}^{m+1} \right)_{\overline{\Gamma}^{m+1}}^{h}+\frac{1}{\eta}\left(\left(\frac{\overline{x}_l^{m+1}-x_l^{m}}{\Delta t }\right)^2+\left(\frac{\overline{x}_r^{m+1}-x_r^{m}}{\Delta t }\right)^2\right)\right] \\ & \quad \text{with} \quad \xi^{m+1}=\frac{R^{m+1}}{W^h(\Vec{\overline{X}}^{m+1})},\\& \label{eqn:first_acdis3d}
         \zeta^{m+1}=1-(1-\xi^{m+1})^r,\\&
         \label{eqn:first_acdis3e}\Vec{X}^{m+1}=\zeta^{m+1}\Vec{\overline{X}}^{m+1}\quad \text{and} \quad \mu^{m+1} =\zeta^{m+1}\overline{\mu}^{m+1}.
    \end{align}
\end{subequations}
\item {\textbf{BDF2-SAV for anisotropic SDF}}: 
    Given $\Vec{X}^{m}\in V^h(\mathbb{I})$, $\Vec{X}^{m-1}\in V^h(\mathbb{I})$, and $R^{m}$, find $(\Vec{\overline{X}}^{m+1}, \overline{\mu}^{m+1})\in [V^h(\mathbb{I})]^2\times V^h(\mathbb{I})$, $\xi^{m+1}$, $R^{m+1}$ and $(\Vec{X}^{m+1}, \mu^{m+1})\in [V^h(\mathbb{I})]^2\times V^h(\mathbb{I})$ such that :
    \begin{subequations}\label{eqn:sec_diff3}
    \begin{align}\label{eqn:sec_diff3a}
        &\left ( \frac{\frac{3}{2}\Vec{\overline{X}}^{m+1}-2\Vec{X}^{m}+\frac{1}{2}\Vec{X}^{m-1}}{\Delta t}\cdot \Vec{\tilde{n}}^{m+1},\varphi^{h}
         \right )_{\tilde{\Gamma}^{m+1}}^{h} +\left ( \partial_s \overline{\mu}^{m+1},\partial_s \varphi^{h}\right )_{\tilde{\Gamma}^{m+1}}^{h}=0 ,\qquad 
        \forall \varphi^h\in V^h(\mathbb{I}),\\ & \label{eqn:sec_diff3b}
        \notag
        \left(\overline{\mu}^{m+1}\Vec{\tilde{n}}^{m+1}, \Vec{\omega}^h\right)_{\tilde{\Gamma}^{m+1}}^h-\left(\mat{B}\left(\tilde{\theta}^{m+1}\right)\partial_s\Vec{\overline{X}}^{m+1},\partial_s\Vec{\omega}^h\right)_{\tilde{\Gamma}^{m+1}}^h +\sigma[\omega_1^h(1)-\omega_1^h(0)]\\ &-\frac{1}{\eta}\left[\frac{\frac{3}{2}\overline{x}_l^{m+1}-2x_l^m+\frac{1}{2}x_1^{m-1}}{\Delta t}\omega^h_1(0)+\frac{\frac{3}{2}\overline{x}_r^{m+1}-2x_r^m+\frac{1}{2}x_r^{m-1}}{\Delta t}\omega^h_1(1)\right]=0 ,\qquad 
        \forall \Vec{\omega}^h\in [V^h(\mathbb{I})]^2,\\&\label{eqn:sec_diff3c}\notag
        \frac{R^{m+1}-R^{m}}{\Delta t}=-\xi^{m+1}\left[\left(\partial_s\overline{\mu}^{m+1},\partial_s\overline{\mu}^{m+1} \right)_{\overline{\Gamma}^{m+1}}^{h}+\frac{1}{\eta}\left(\left(\frac{\overline{x}_l^{m+1}-x_l^{m}}{\Delta t }\right)^2+\left(\frac{\overline{x}_r^{m+1}-x_r^{m}}{\Delta t }\right)^2\right)\right] \\ & \quad \text{with} \quad \xi^{m+1}=\frac{R^{m+1}}{W^h(\Vec{\overline{X}}^{m+1})}, 
         \\& \label{eqn:sec_diff3d}
         \zeta^{m+1}=1-(1-\xi^{m+1})^r, \\&
         \label{eqn:sec_diff3e}\Vec{X}^{m+1}=\zeta^{m+1}\Vec{\overline{X}}^{m+1}\quad \text{and} \quad \mu^{m+1}=\zeta^{m+1}\overline{\mu}^{m+1}.
    \end{align}
\end{subequations}
\end{itemize}

     \begin{rem}
We can directly extend the above methods to the BDFk schemes ($k\geq 3$); however, to avoid excessive length of the article, we will not elaborate on it in this paper. Additionally, we can develop the corresponding variable-time-step methods, and the results given in Theorem \ref{thm:propertiesofiso} can also be directly  proved. 
Moreover, the energy-stable methods proposed in this work have high applicability and can also be directly used in other geometric flows in both two-dimensional and three-dimensional spaces.
\end{rem}


\section{Numerical results}\label{sec:experiment}
In this section, we conduct a series of experiments to demonstrate the superiority of our proposed schemes. 
In the experimental process of the SDF, we adopt the ellipse defined by $\frac{x^2}{4}+y^2=1$ as the initial closed curve. For SSD, we select the upper half of the ellipse, where $y \ge 0$, as the initial open curve. Except for ellipse, we also use rectangle as initial curve in the study of morphological evolution. Additionally, we consistently choose the contact line mobility $\eta = 100$.

\subsection{Isotropic/anisotropic SDF}

\textbf{Example 1} (Convergence tests) 
Let $\Omega_1$ and $\Omega_2$ denote the inner regions enclosed by $\Gamma_1$ and $\Gamma_2$, respectively, then the manifold distance between the two closed curves is defined as  \cite{Zhao20}:
\begin{equation}\label{eqn:manifold}
    \text{M}(\Gamma_1,\Gamma_2):= \left|(\Omega_1\setminus \Omega_2)\cup(\Omega_2 \setminus \Omega_1)\right| = \left|\Omega_1\right|+\left|\Omega_2\right|-2\left|\Omega_1\cup\Omega_2\right|,
\end{equation}
where $\left|\Omega\right|$ represents the area of $\Omega$.
For the purpose of testing temporal errors, we fix the number of spatial divisions $N$ to be sufficiently large so that spatial error can be neglected compared with the temporal discrete error. Subsequently, we take different time step and then the errors can be computed as follows:
\begin{equation}\label{eqn:error_order}
    e_\tau(T):=\text{M}(\Vec{X}_{h,T/\tau},\Vec{X}_{h,T/\frac{\tau}{2}}).
\end{equation}

 We plot the errors of the BDF1-SAV and BDF2-SAV methods in Figure \ref{fig:order_SDF}. It can be observed that under varying surface energy densities, the convergence orders of the two schemes are consistent with our desired results. During the tests, we set \( r = 2 \) for the BDF1-SAV method and \( r = 3 \) for the BDF2-SAV method, as noted in Remark \ref{rem:order_r}. 
\begin{figure}[!htp]
         \centering
    \includegraphics[width=0.45\textwidth]{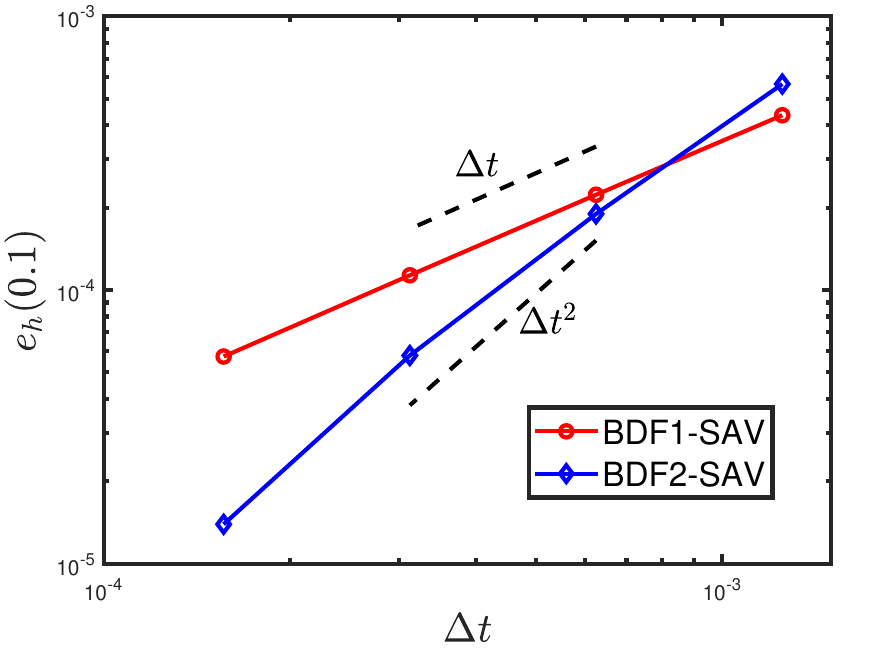}
    \includegraphics[width=0.45\textwidth]{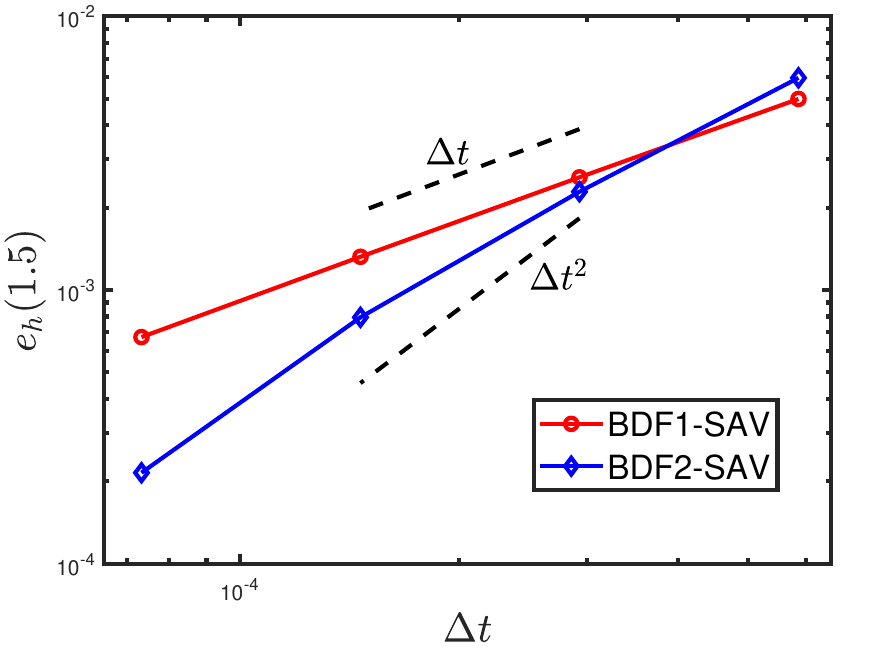}
    \includegraphics[width=0.45\textwidth]{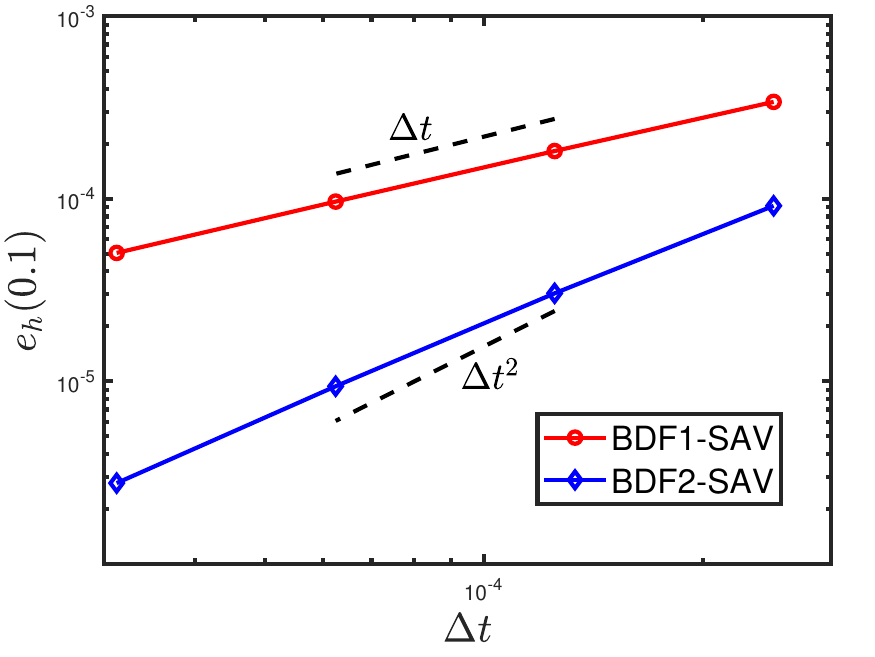}
    \includegraphics[width=0.45\textwidth]{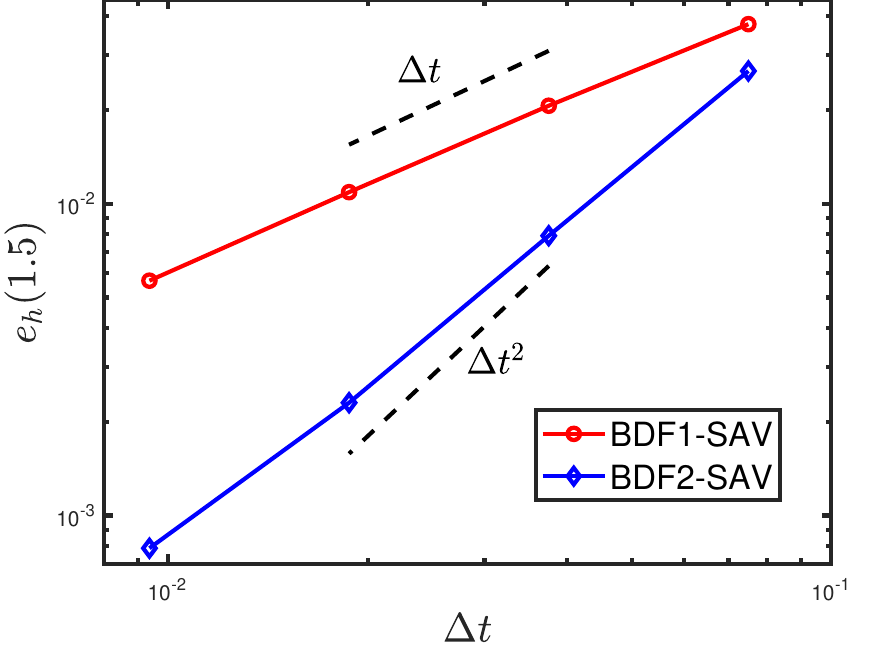}
    \caption{Convergence rates of BDF1-SAV and BDF2-SAV at times $T = 0.1$, $1.5$ with different surface energy densities: $\gamma(\theta)\equiv 1$ and $\gamma(\theta)=1+0.05\cos(4\theta)$.}
    \label{fig:order_SDF}
    \end{figure}


\textbf{Example 2} (Area/mass loss \& Energy stability \& Mesh quality) In this example, to measure the area loss during the evolution, we compute relative area loss $\Delta A^h(t)$ as follows:
\begin{equation}\label{area_loss_SDF}
    \Delta A^h(t)\big|_{t=t_m}:=\frac{A^h(\Vec{X}^m)-A^h(\Vec{X}^0)}{A^h(\Vec{X}^0)},
\end{equation}
where $A^h(\Vec{X}^m)$ is defined in \eqref{eqn:disc_area}. 
The modified energy $R(t)$ and the original energy $L^h(t)$ or $W^h(t)$ are defined as:
\begin{equation} \label{eqn:SDF_ener}
    R(t)\big|_{t=t_m}=R^m,\qquad L^h(t)|_{t=t_m} = L^h(\Vec{X}^m), \qquad W^h(t)|_{t=t_m}=W^h(\Vec{X}^m).
\end{equation}
The mesh ratio indicator $\Psi(t)$ is defined by:
\begin{equation}\label{eqn:mesh_indicator}
\Psi(t)\big|_{t=t_m}=\Psi^m:=\frac{\max_{1\le j \le N}|\Vec{h}_j^m|}{\min_{1\le j \le N}|\Vec{h}_j^m|}.
\end{equation}
We further define the difference in terms of modified and original anisotropic surface energy, given by 
\begin{equation}\label{eqn:ener_dif}
\Delta W^h(t)\bigg|_{t=t_m} = \big|R^m - W^h(\Vec{X}^m)\big|.
\end{equation}
In this example, we mainly do the following numerical tests:
\begin{itemize}
    \item In Figure \ref{fig:SDF_area_scheme}, we test the area loss for the three schemes under different surface energy densities, with the parameter $r$ set to $3$ and $6$, respectively.
The curves of the area loss for the three schemes are relatively unstable when $r = 3$, while it converges to a stable level when $r = 6$. In the case of $r = 6$, BDF1-CSAV exhibits a smaller area loss, approaching area conservation, compared to the other two schemes. 
\begin{figure}[!htp]
         \centering
    \includegraphics[width=0.45\textwidth]{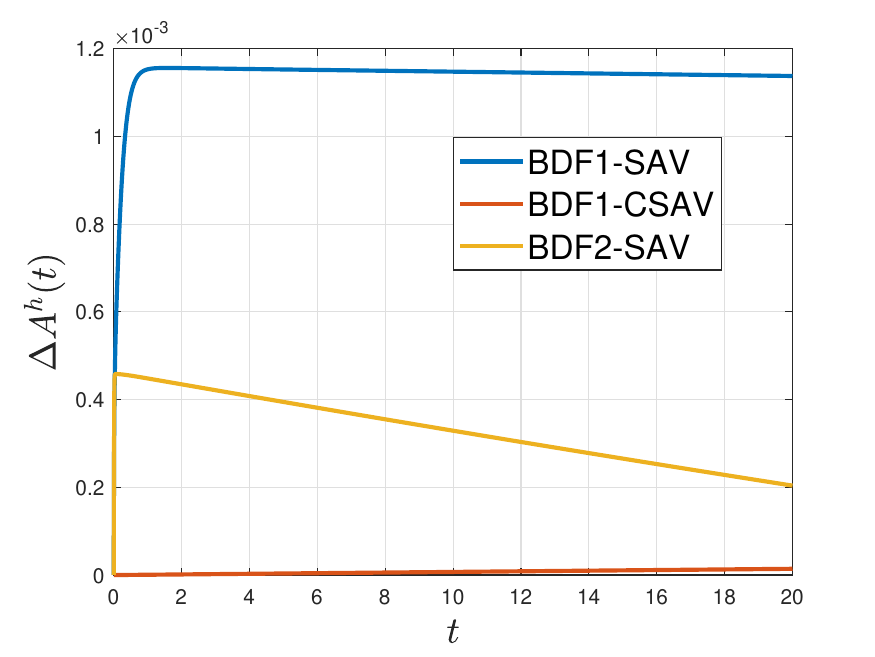}\quad
    \includegraphics[width=0.45\textwidth]{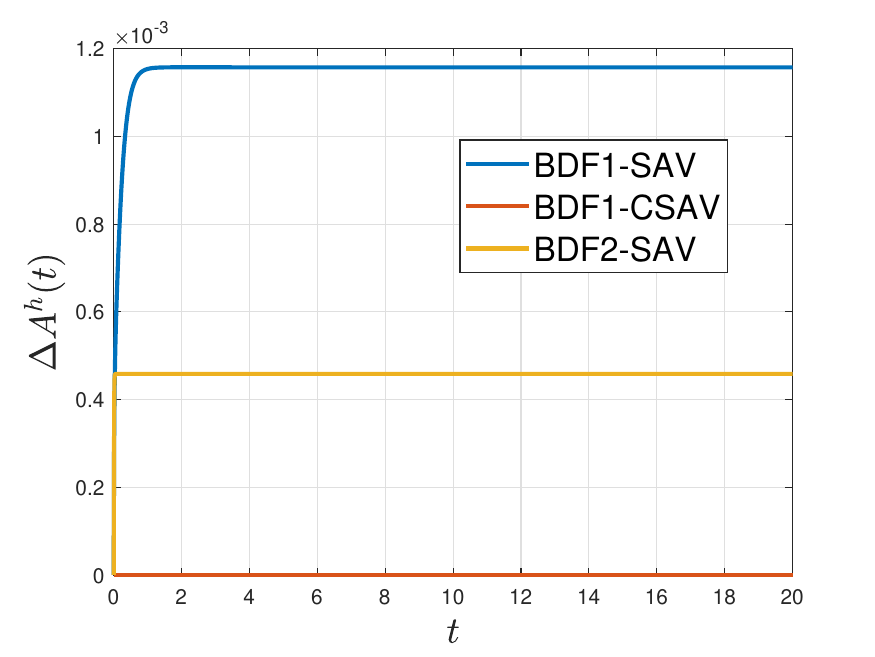}
    \includegraphics[width=0.45\textwidth]{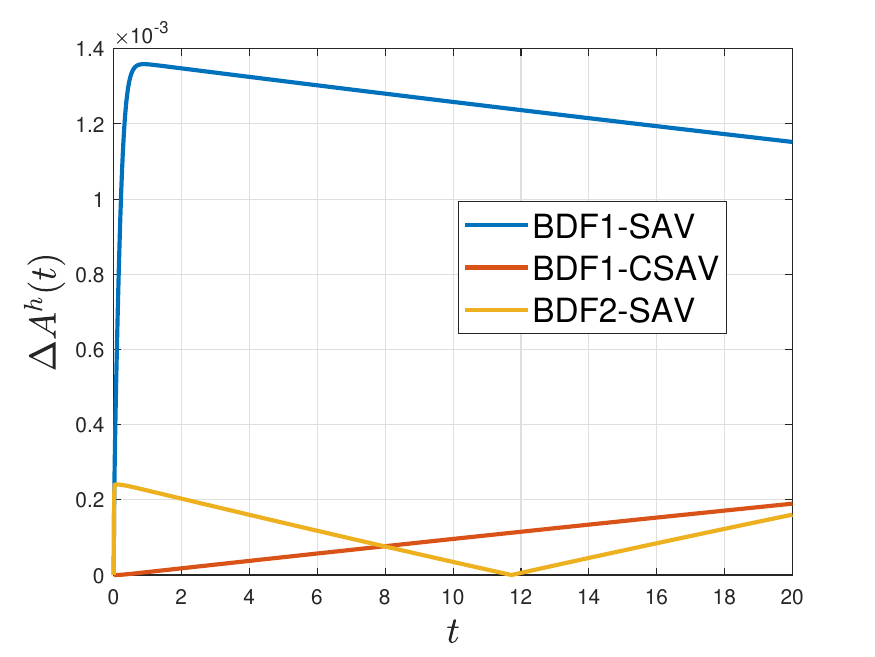}\quad
    \includegraphics[width=0.45\textwidth]{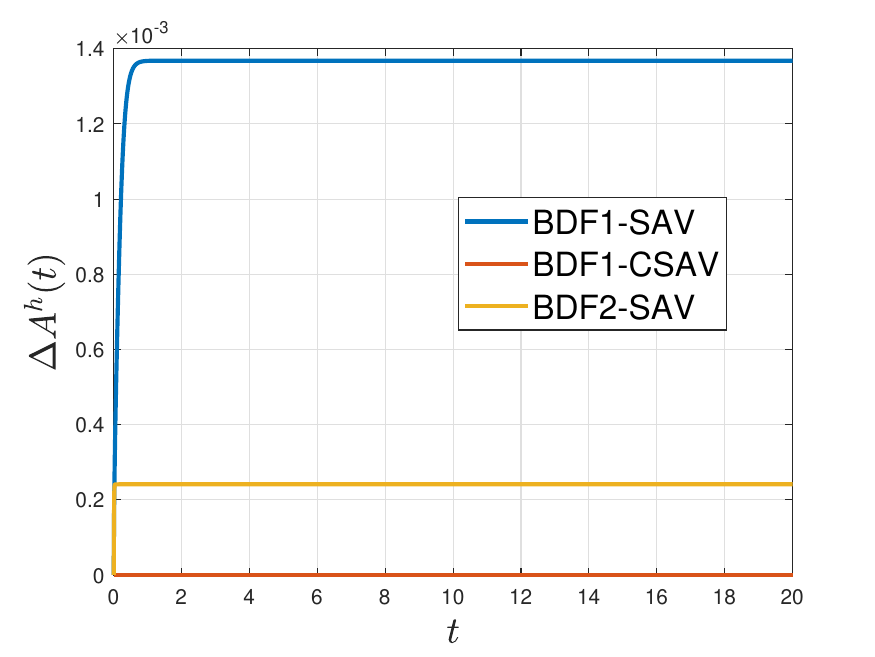}
    \includegraphics[width=0.45\textwidth]{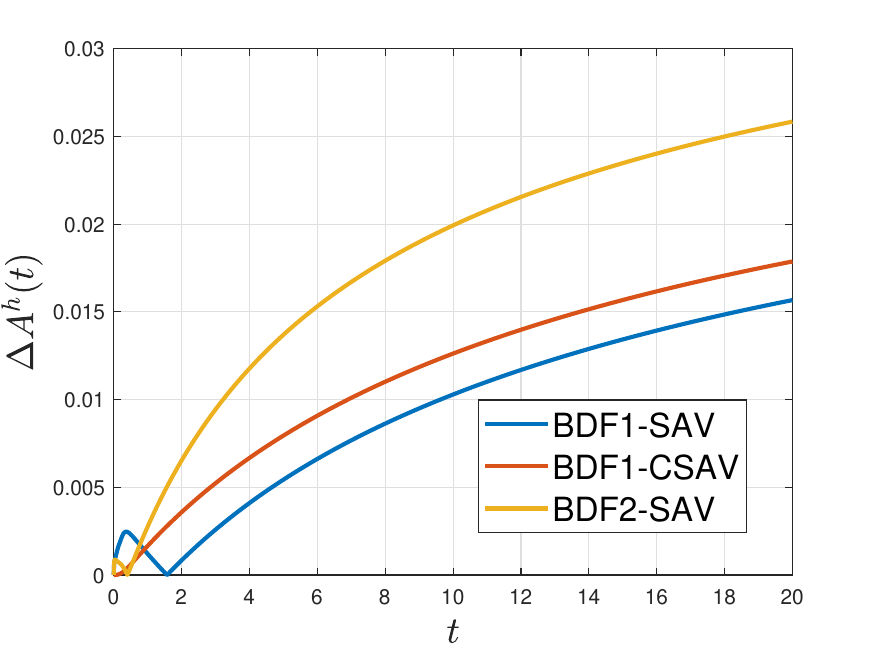}\quad
    \includegraphics[width=0.45\textwidth]{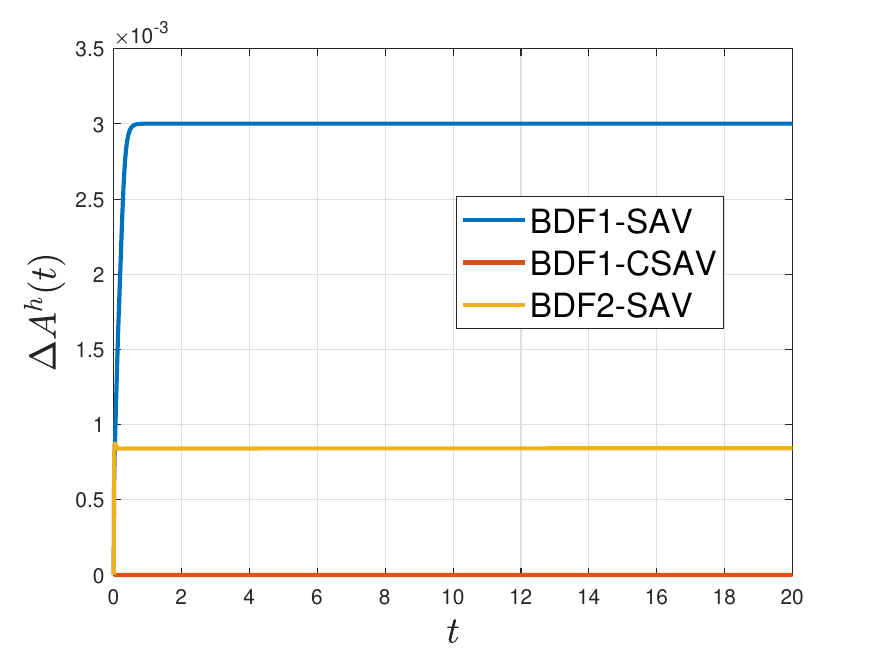}
    \caption{The relative area loss $\Delta A^h(t)$ for the three schemes at $r = 3$, 6 under surface energy $\gamma(\theta) = 1+ \beta \cos(4\theta)$: $\beta =0,\frac{1}{20},\frac{1}{10}$. Parameters are chosen as $N = 80$, $\Delta t = \frac{1}{160}$.}
    \label{fig:SDF_area_scheme}
\end{figure}
\item We further check the energy stability law shown in Theorem \ref{thm:ener}. 
In Figure \ref{fig:ener_SDF1}, we present the evolution of the modified and original energies over time for three schemes under varying surface energy densities. 
We observe that although the energy stability in theory is a type of modified energy, our numerical experiments demonstrate that both the original and modified forms of energy are dissipative over time, and their values are very close to each other.
Furthermore, in combination with the observation from Figure \ref{fig:SDF_area_scheme}, the scheme with better area conservation tends to exhibit less decline in the original energy. 
\begin{figure}[!htp]
         \centering
    \includegraphics[width=0.45\textwidth]{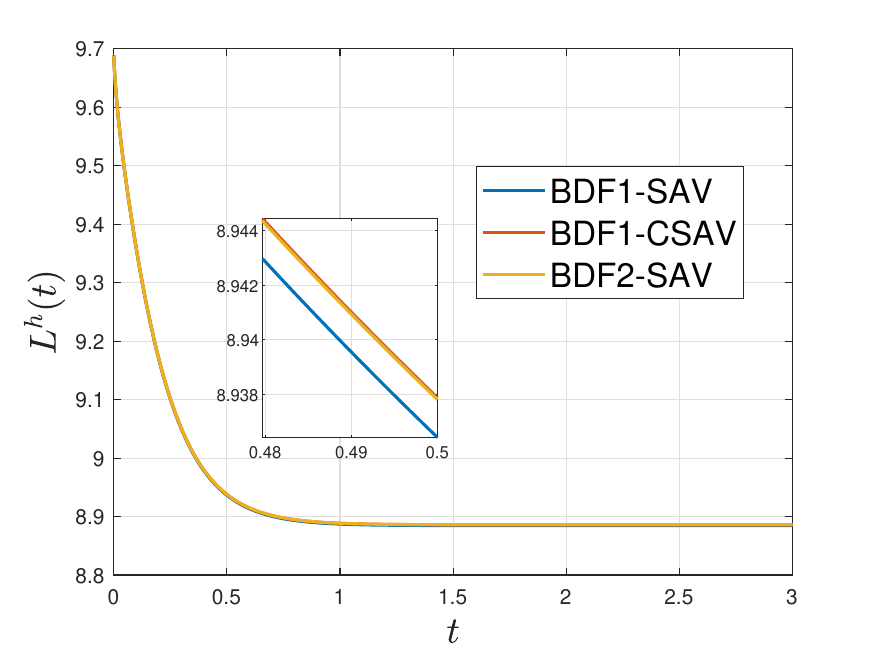}
    \includegraphics[width=0.45\textwidth]{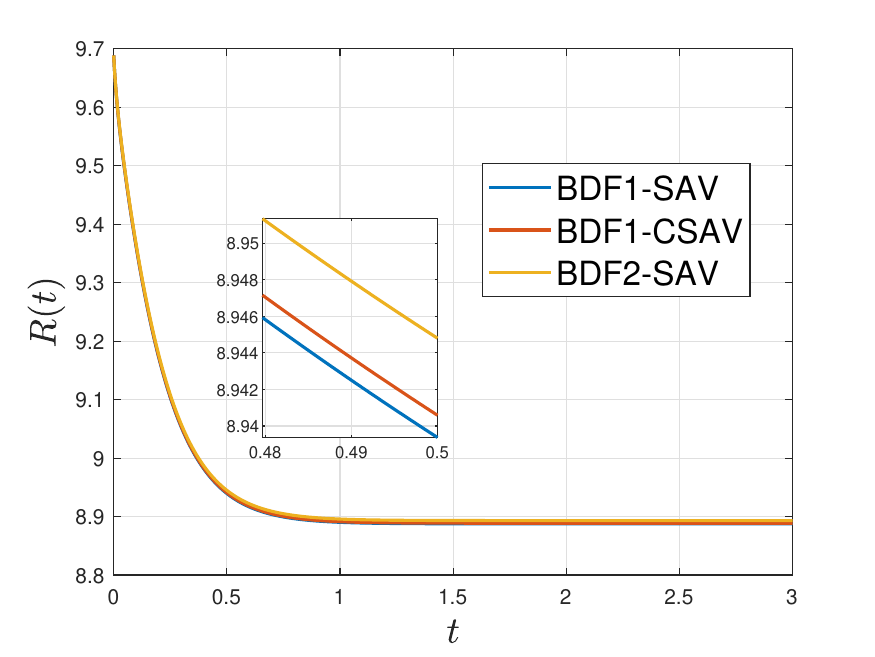}
    \includegraphics[width=0.45\textwidth]{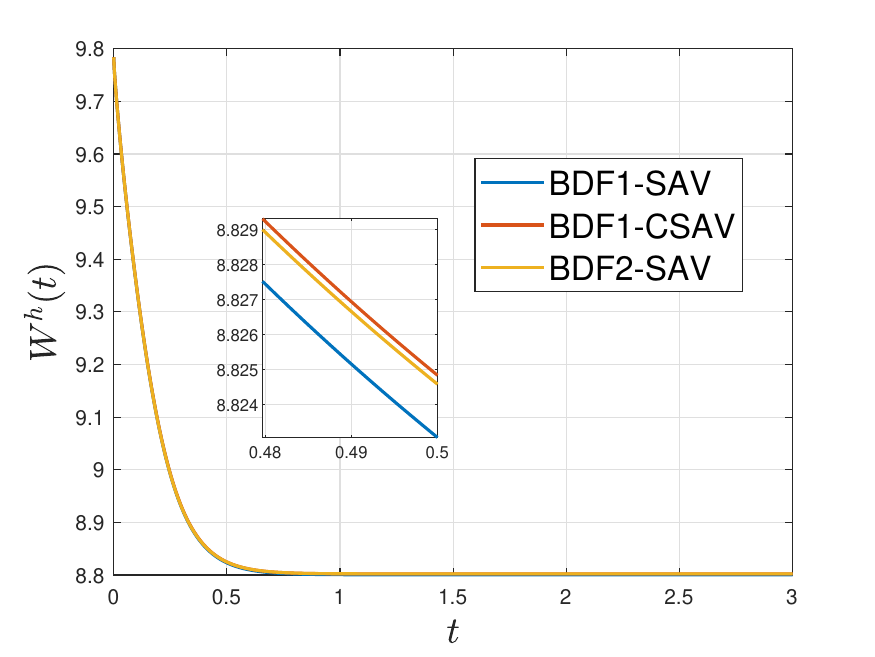}
    \includegraphics[width=0.45\textwidth]{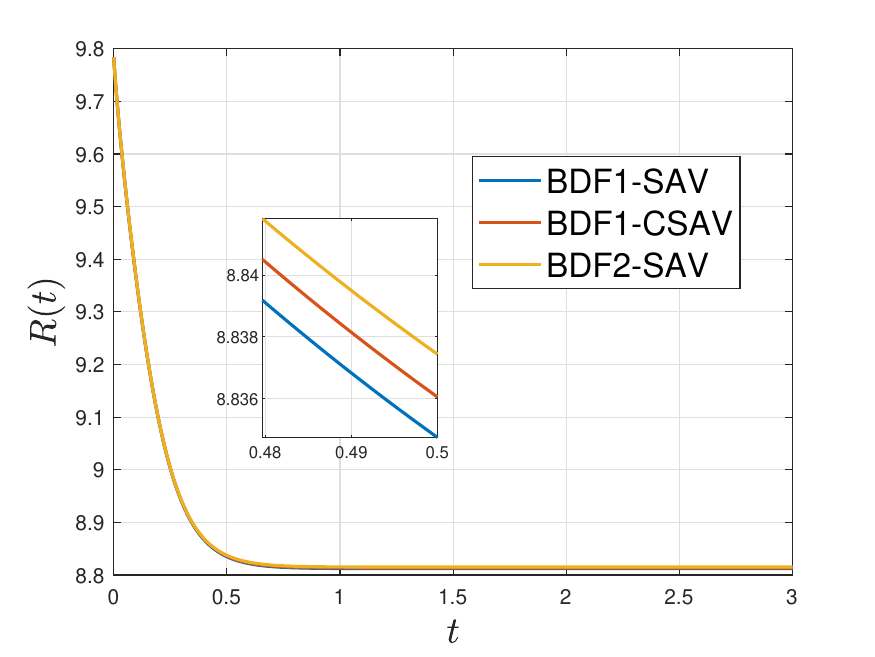}
    \includegraphics[width=0.45\textwidth]{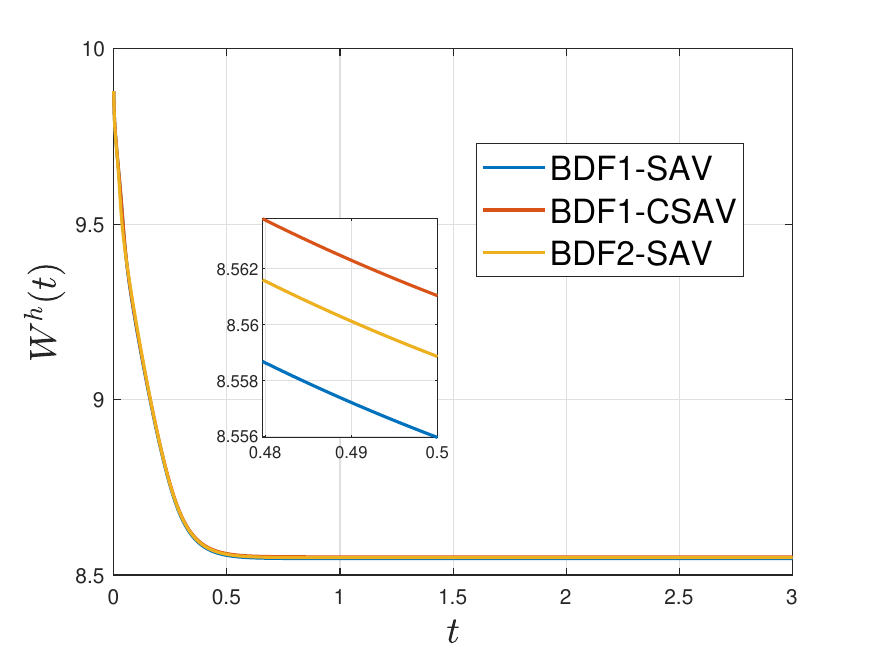}
    \includegraphics[width=0.45\textwidth]{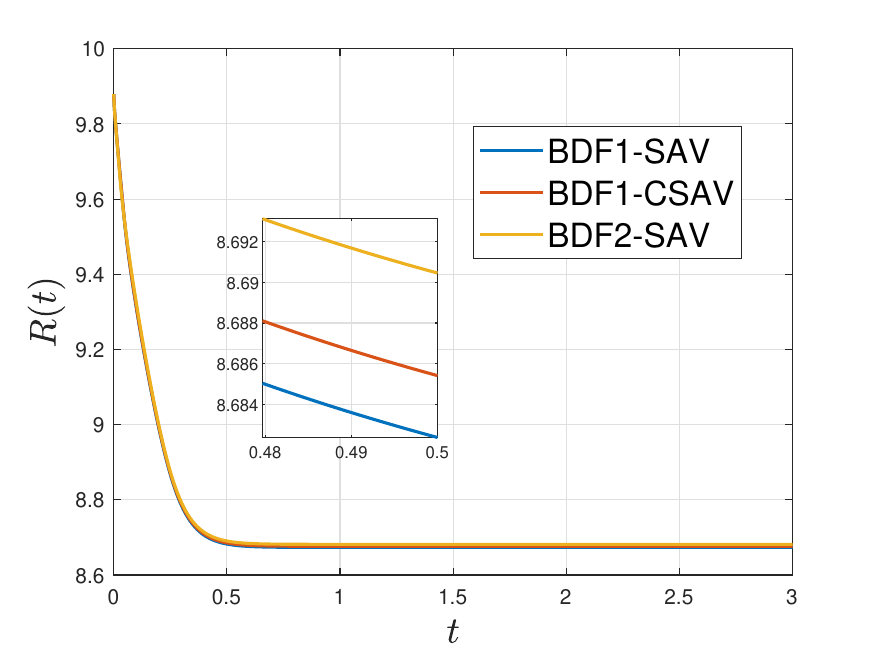}
    \caption{The original energy and modified energy for the three schemes under surface energy $\gamma(\theta) = 1+ \beta \cos(4\theta)$: $\beta =0,\frac{1}{20},\frac{1}{10}$. Parameters are chosen as $N = 640$, $\Delta t = \frac{1}{640}$, $r = 6$.}
    \label{fig:ener_SDF1}
\end{figure}
\item 

We demonstrate through numerical experiments that the constructed schemes maintain excellent mesh quality throughout the evolution process.
Figure \ref{fig:mesh_SDF} shows the temporal evolution of the mesh ratio for the three schemes under different surface energy densities. We observe that for flows with isotropic surface energy, the mesh ratio of all three schemes converges to $1$ over time, while for flows with anisotropic surface energy, the mesh ratio initially decreases with time and eventually converges to a  constant $C\approx 1.87$. 
\end{itemize}

      \begin{figure}[!htp]
         \centering
    \includegraphics[width=0.45\textwidth]{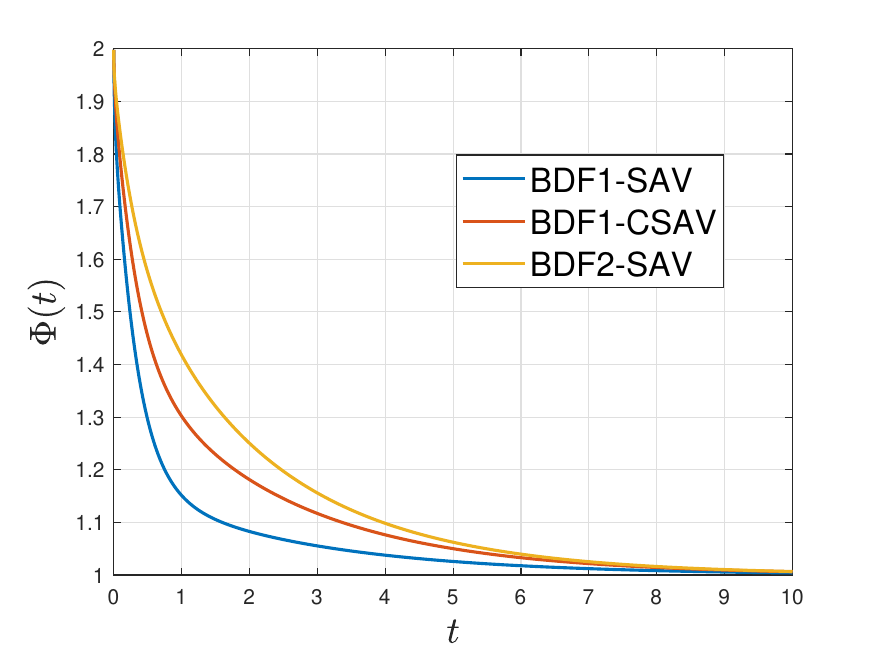}
    \includegraphics[width=0.45\textwidth]{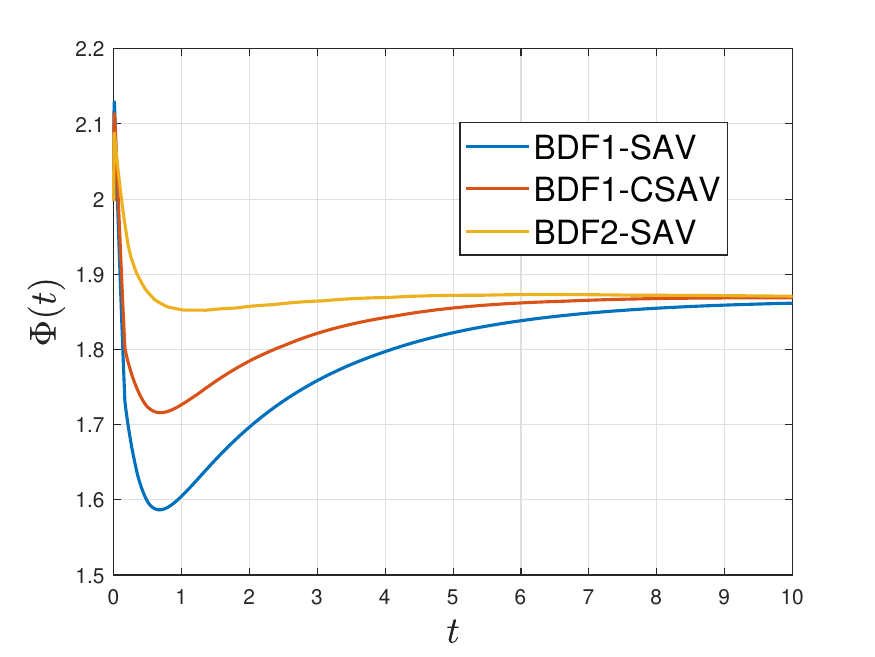}
    
    \caption{Temporal evolution of the mesh ratio $\Psi(t)$ for the three schemes with different surface energies $\gamma(\theta)=1+\beta \cos(4\theta)$: $\beta = 0, \frac{1}{20}.$ Parameters are chosen as $N=2^{-7}$, $\Delta t = 10^{-3}$, $r = 3$.}
    \label{fig:mesh_SDF}
    \end{figure}

\textbf{Example 3} (Factors related to area and energy) For the BDF1-CSAV, based on Theorem \ref{thm:Quasi-ac}, we test area loss under different values of $r$. Moreover, since $\xi^{m+1} = 1 + O(\Delta t)$, we observe that the difference between the original energy and the modified energy decreases as the time step size decreases. 
Figure \ref{fig:CSAV_area_SDF} indicates that for the BDF1-CSAV, different values of $r$ result in varying area losses. When the time step $\Delta t = \frac{1}{160}$, the area preservation of the scheme improves as the value of r increases. Figure \ref{fig:ener_dis_SDF} demonstrates that for anisotropic surface energy, $\Delta W^h(t)$ in all three schemes decreases as the time step decreases, indicating a closer approximation between the original energy and the modified energy.


\begin{figure}[!htp]
         \centering
    \includegraphics[width=0.33\textwidth]{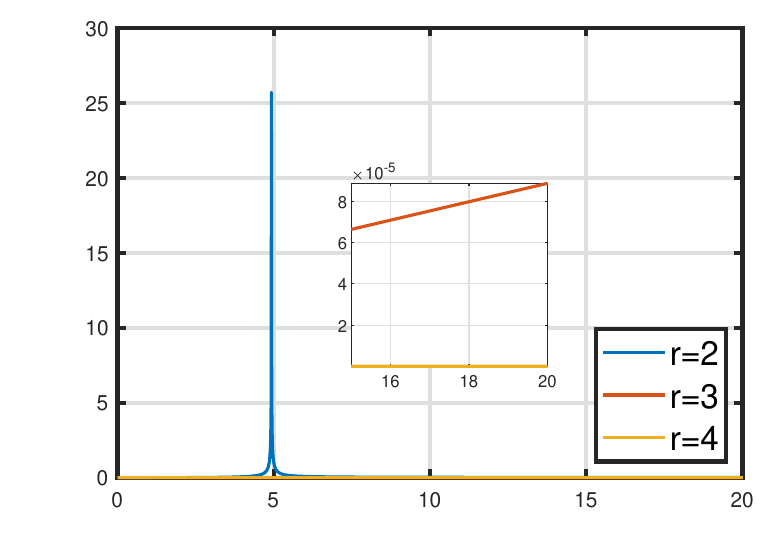}
    \includegraphics[width=0.33\textwidth]{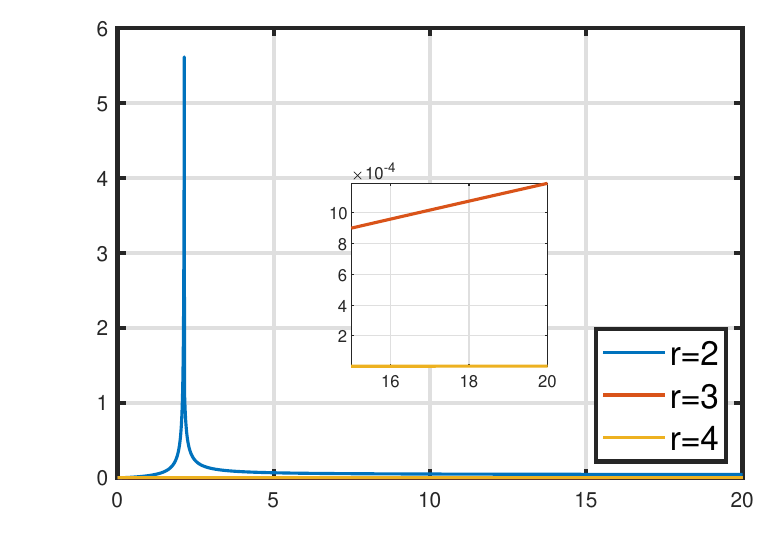}
    \includegraphics[width=0.33\textwidth]{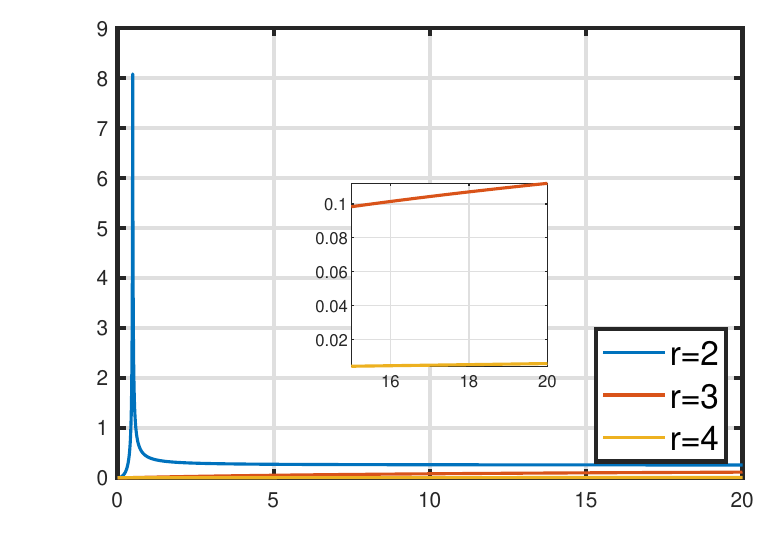}
    \caption{The area loss $ \left|A^h(\Vec{X}^m)-A^h(\Vec{X}^0) \right| $ for the BDF1-CSAV at r = 2,3,4 under surface energy $\gamma(\theta) = 1+ \beta \cos(4\theta)$: $\beta =0,\frac{1}{20},\frac{1}{10}$. Parameters are chosen as $N = 80$, $\Delta t = \frac{1}{160}$.}
    \label{fig:CSAV_area_SDF}
\end{figure}

    

    \begin{figure}[!htp]
         \centering
    \includegraphics[width=0.33\textwidth]{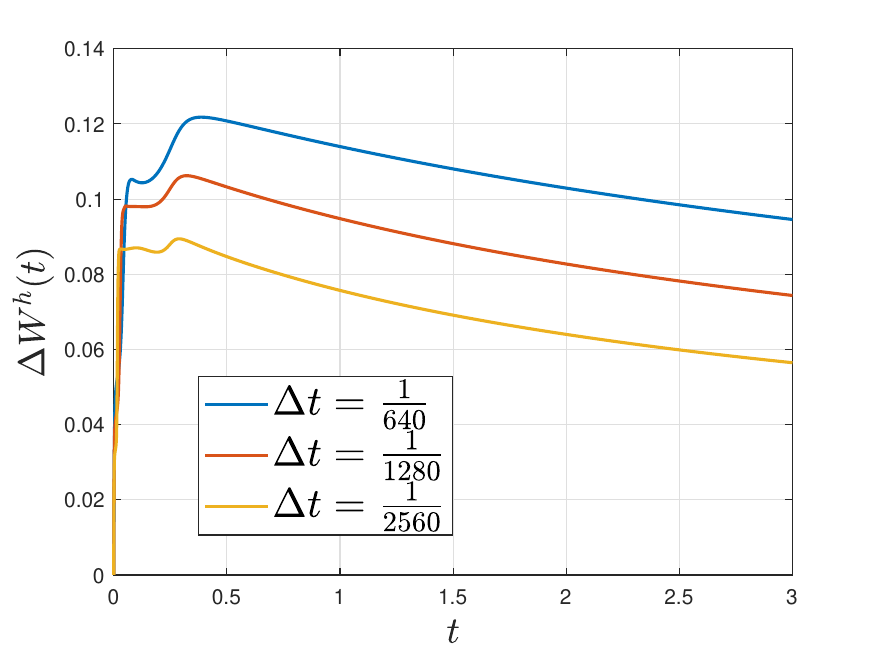}
    \includegraphics[width=0.33\textwidth]{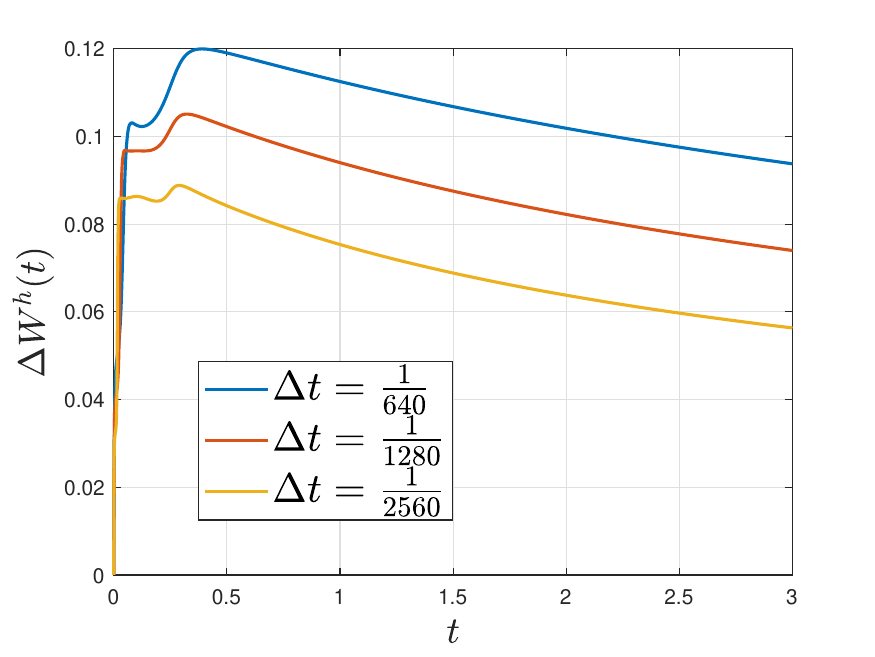}
    \includegraphics[width=0.33\textwidth]{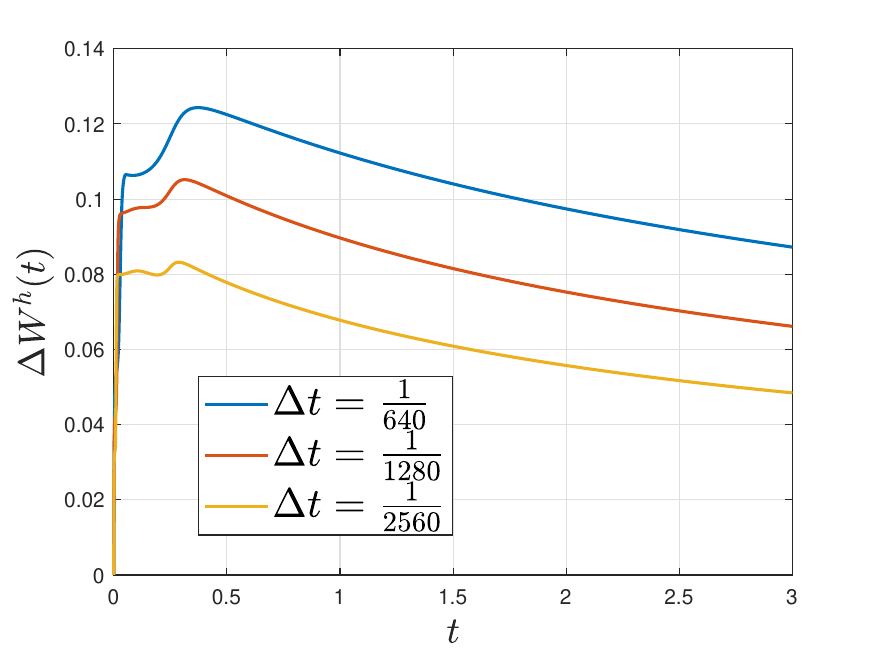}
    
    \caption{Temporal evolution of $\Delta W^h(t)$ for the three schemes with $\gamma(\theta)=1+\frac{1}{10}\cos(4\theta)$. Parameters are chosen as $N=640$, $r=3$.}
    \label{fig:ener_dis_SDF}
    \end{figure}

    

\subsection{SSD}
We in this subsection utilize three schemes to conduct a series of relevant experiments for the SSD. The specific results are as follows:
\begin{itemize}
    \item In the convergence testing for SSD, we maintain the selection of $r$ values. It is clear from the Figure \ref{fig:order_SSD} that the schemes preserve the temporal order of high-order schemes.
    \item Figure \ref{fig:SSD_area_scheme} depicits the temporal evolution of area loss for the three schemes in terms of SSD problem, measured at different values of $r$. The figure demonstrates that as $r$ value increases from 3 to 6, the area loss for the different schemes tend to stabilitize, with the BDF1-CSAV scheme consistently maintaining its superiority in area conservation property.
    \item In Figure \ref{fig:relate_ener_SSD}, the three schemes are depicted for varying surface energy densities. Additionally, combining with Figure \ref{fig:SSD_area_scheme}, it is noted that the scheme with lower area loss also demonstrates less decline in the original energy.
    \item Figure \ref{fig:mesh_SSD} illustrates the temporal evolution of the mesh ratio in SSD, showing that for isotropic surface energy densities, the mesh ratio consistently converges to 1 over time. In contrast, for anisotropic surface energy densities, the mesh ratios of all three schemes converges to a constant C $\approx 1.98$.
    \item Figure \ref{fig:CSAV_area_SSD} illustrates that for SSD of open curves with different surface energy densities, the area loss decreases as the value of $r$ increases. Figure \ref{fig:ener_dis_SSD} shows that in the evolution process of strong anisotropic SDF, $\Delta W^h(t)$ initially exhibits irregular behavior for different time steps. However, it eventually decreases as the time step decreases.
\end{itemize}


  \begin{figure}[!htp]
         \centering
    \includegraphics[width=0.45\textwidth]{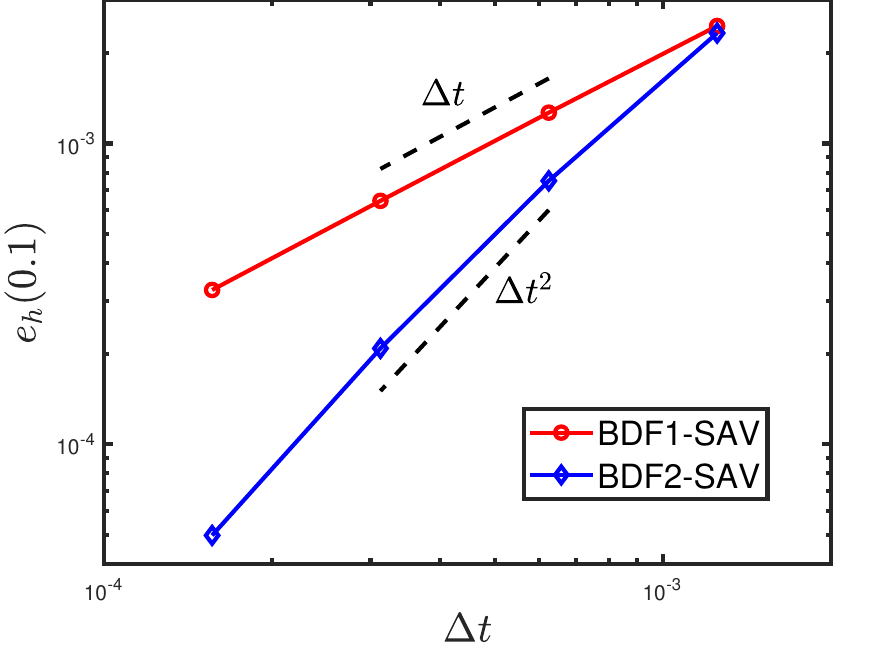}
    \includegraphics[width=0.45\textwidth]{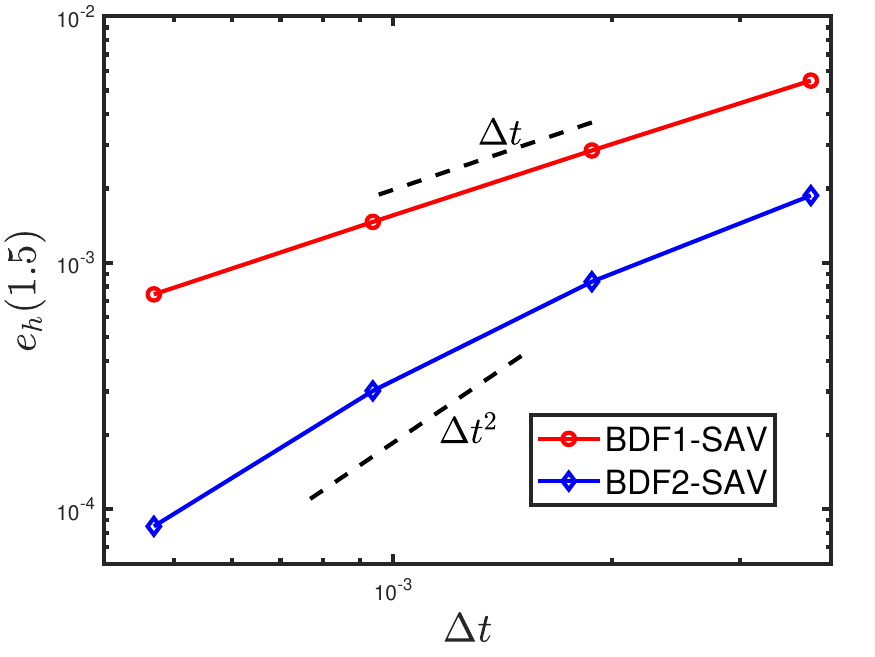}
    \includegraphics[width=0.45\textwidth]{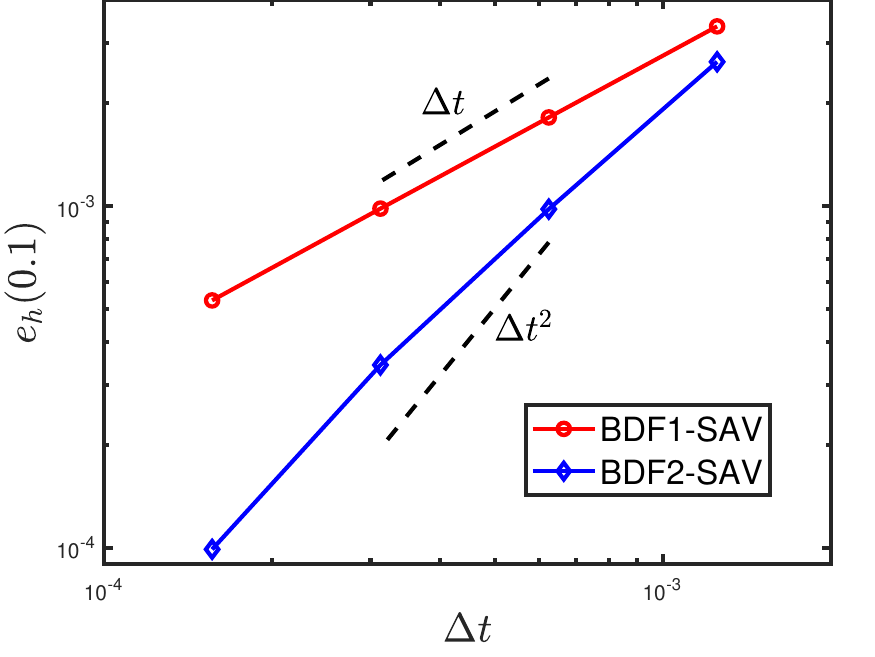}
    \includegraphics[width=0.45\textwidth]{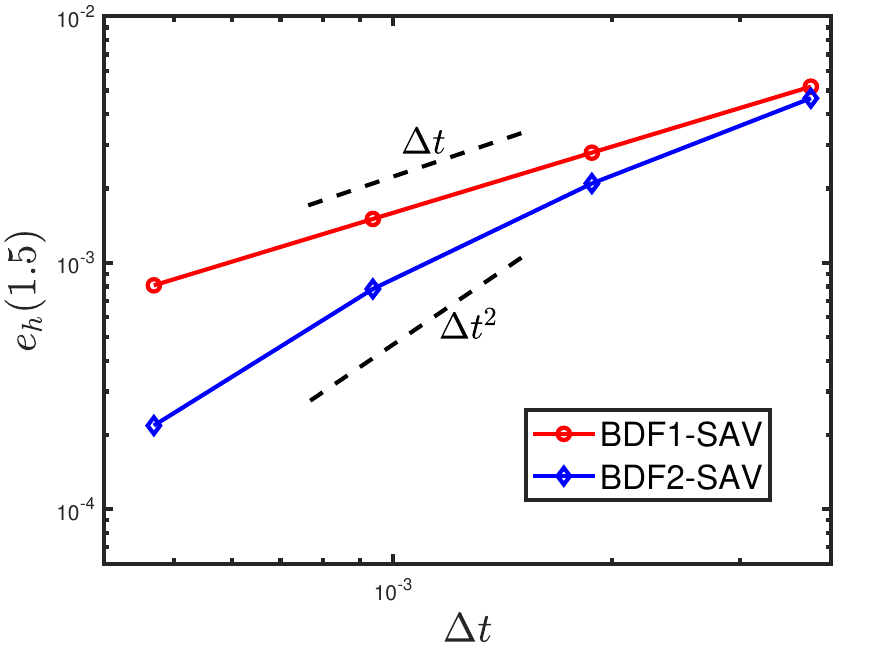}
    \caption{Convergence rates of BDF1-SAV and BDF2-SAV at times $T = 0.1$, $1.5$ with different surface energy densities: $\gamma(\theta)\equiv 1$ and $\gamma(\theta)=1+0.05\cos(4\theta)$, $\sigma = \cos(\frac{3}{4}\pi)$.}
    \label{fig:order_SSD}
    \end{figure}

    \begin{figure}[!htp]
         \centering
    \includegraphics[width=0.45\textwidth]{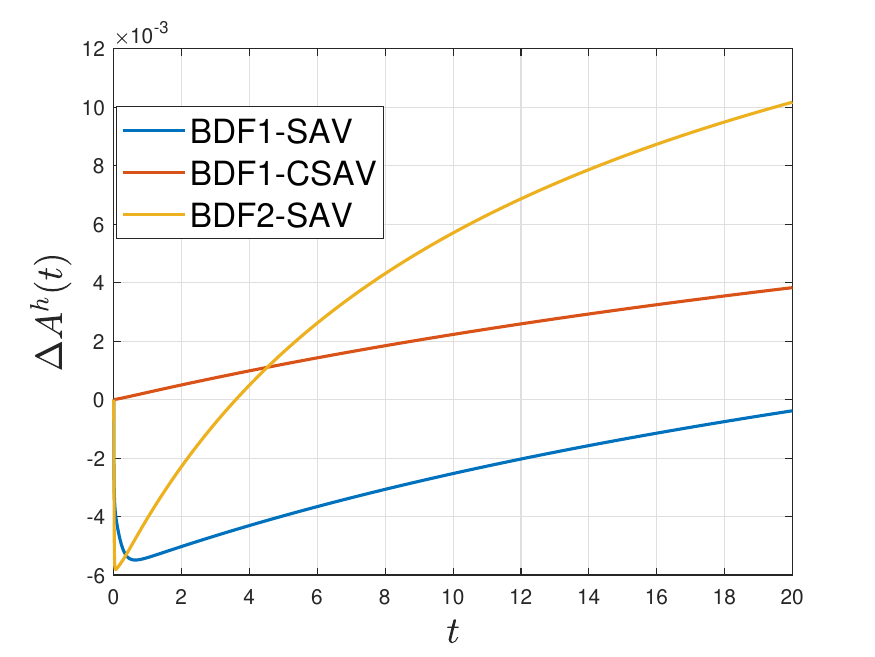}\quad
    \includegraphics[width=0.45\textwidth]{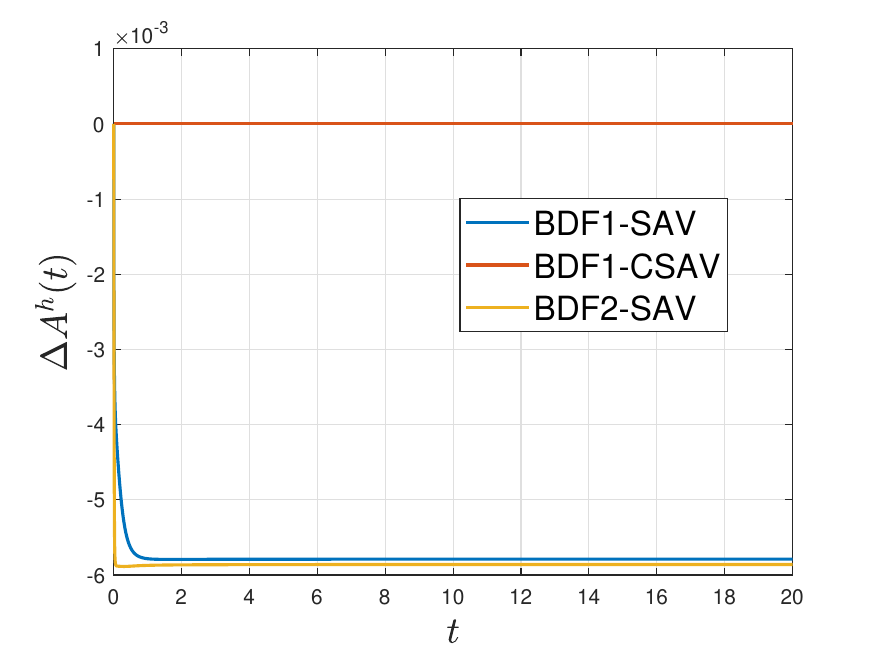}
    \includegraphics[width=0.45\textwidth]{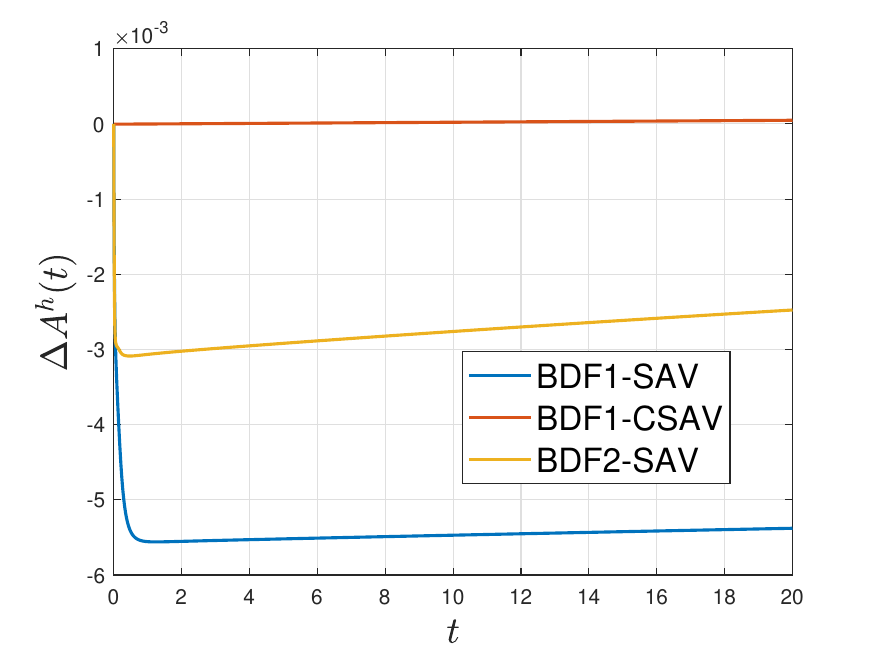}\quad
    \includegraphics[width=0.45\textwidth]{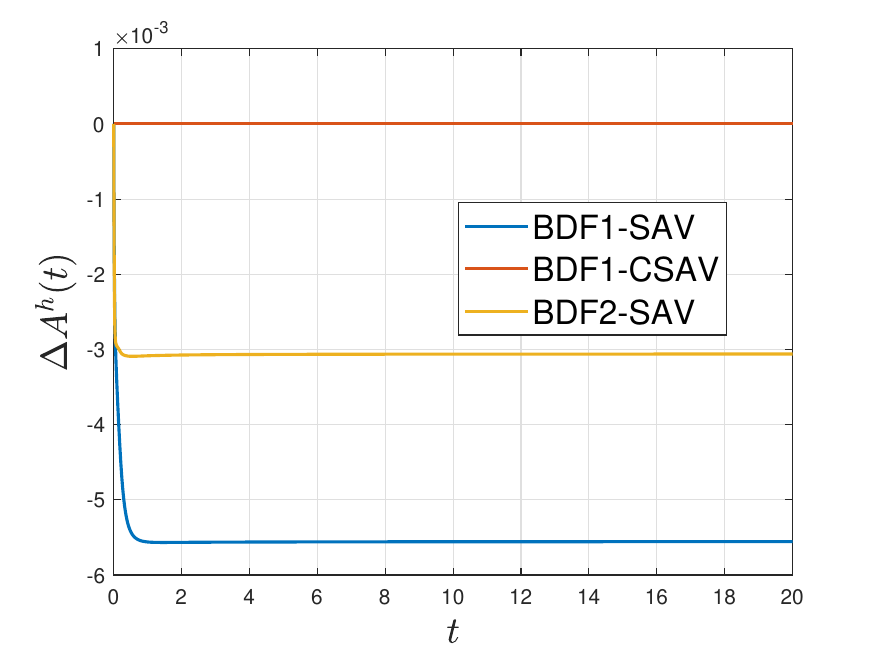}
    \includegraphics[width=0.45\textwidth]{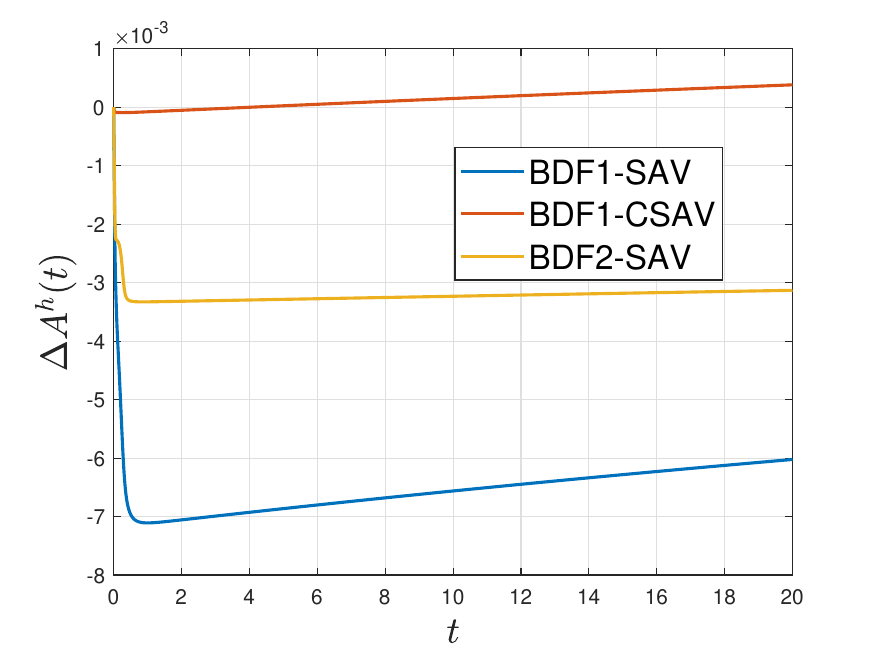}\quad
    \includegraphics[width=0.45\textwidth]{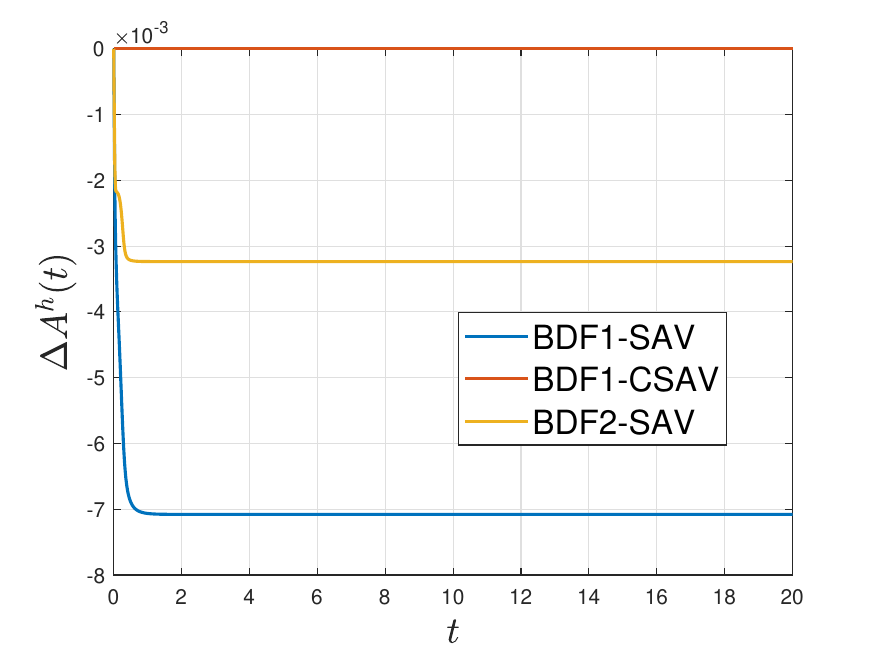}
    \caption{The relative area loss $\Delta A^h(t)$ for the three schemes at $r = 3$ and $r = 6$ under surface energy $\gamma(\theta) = 1+ \beta \cos(4\theta)$: $\beta =0,\frac{1}{20},\frac{1}{10}$. Parameters are chosen as $N = 80$, $\Delta t = \frac{1}{160}$, $\sigma = \cos(\frac{3}{4}\pi)$.}
    \label{fig:SSD_area_scheme}
\end{figure}

\begin{figure}[!htp]
         \centering
    \includegraphics[width=0.45\textwidth]{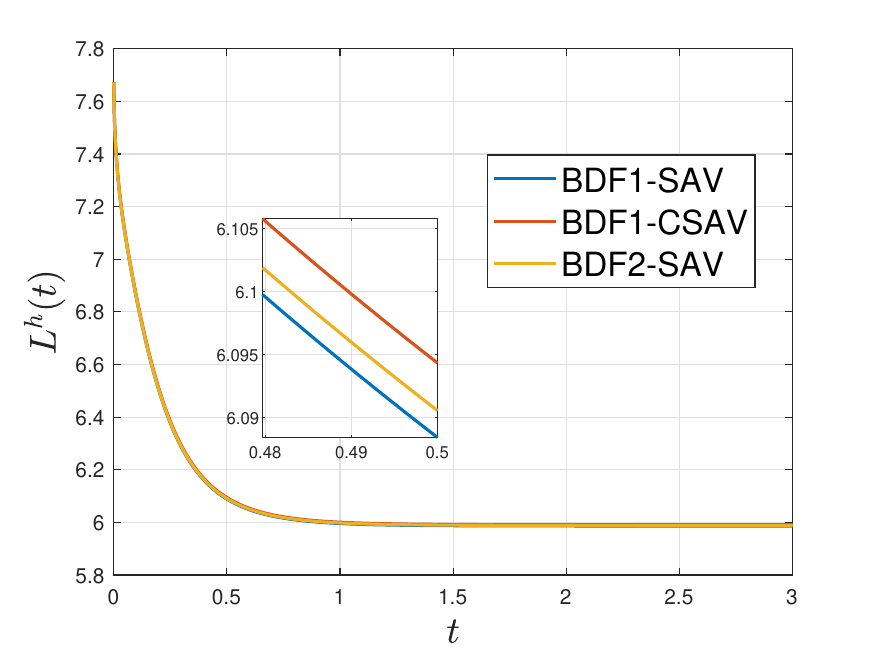}
    \includegraphics[width=0.45\textwidth]{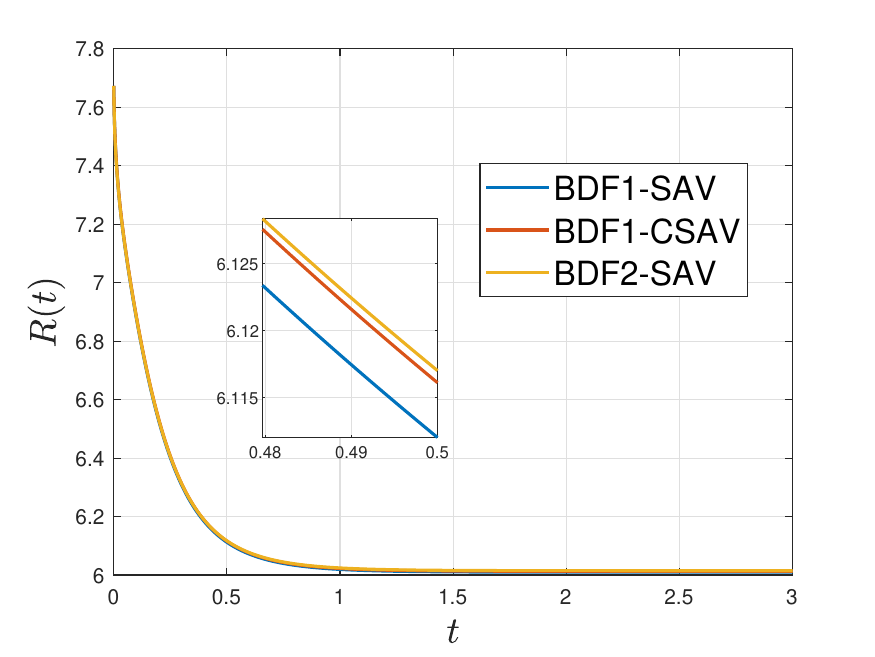}
    \includegraphics[width=0.45\textwidth]{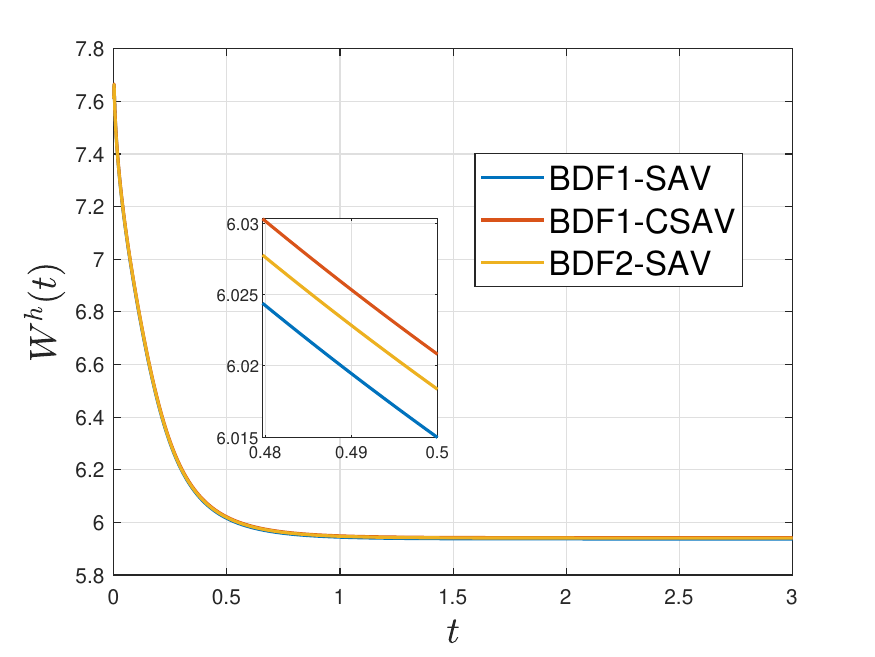}\quad
    \includegraphics[width=0.45\textwidth]{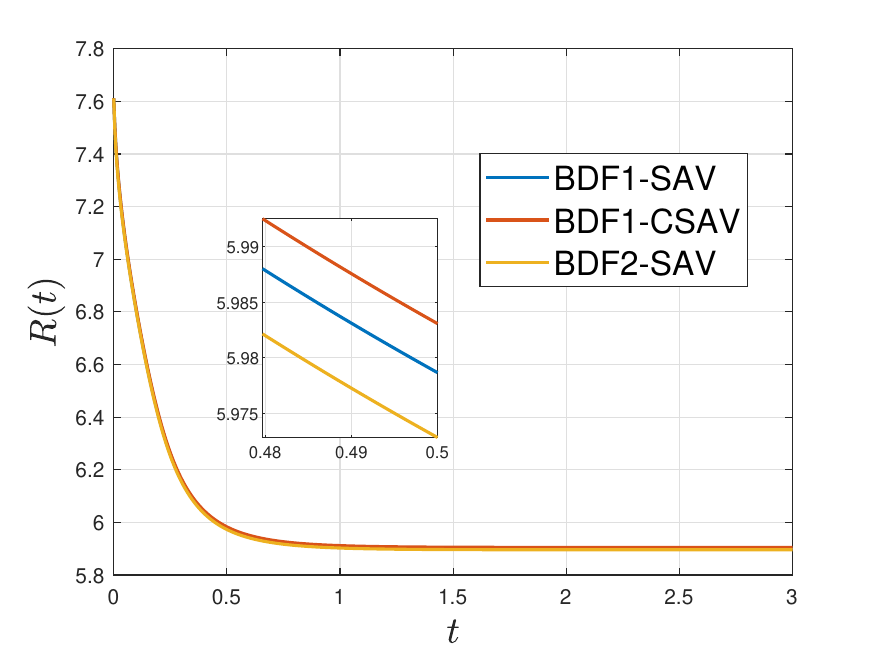}
   \includegraphics[width=0.45\textwidth]{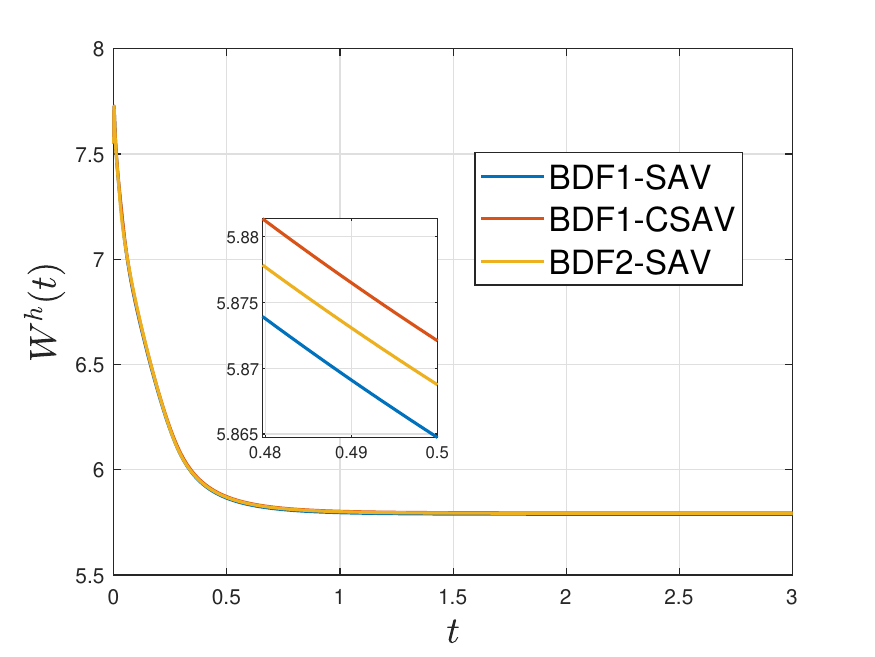}\quad
    \includegraphics[width=0.45\textwidth]{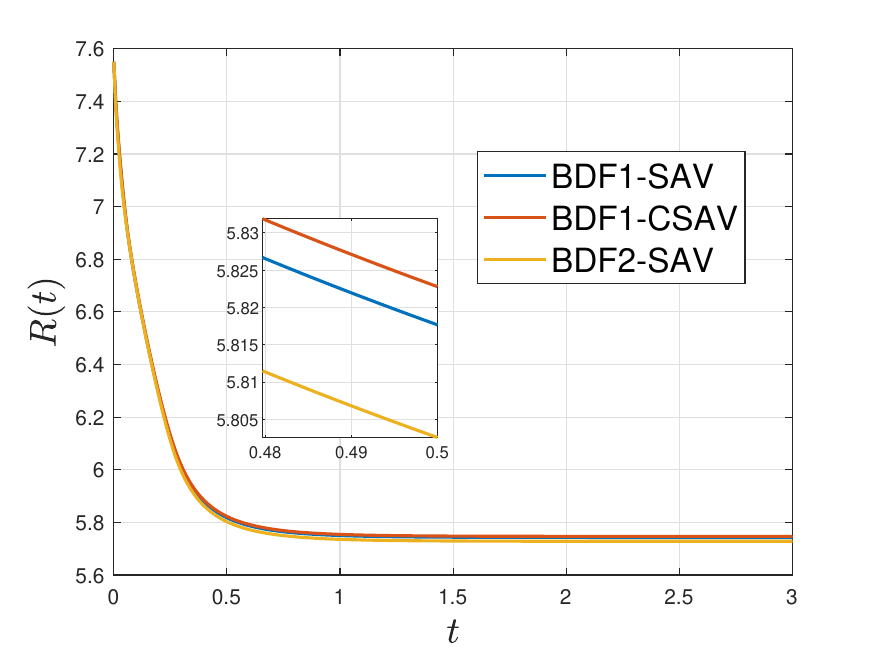}
    \caption{The original energy and modified energy for the three schemes under surface energy $\gamma(\theta) = 1+ \beta \cos(4\theta)$: $\beta =0,\frac{1}{20},\frac{1}{10}$. Parameters are chosen as $N = 640$, $\Delta t = \frac{1}{640}$, $r = 6$, $\sigma = \cos(\frac{3}{4}\pi)$.}
    \label{fig:relate_ener_SSD}
\end{figure}
  
\begin{figure}[!htp]
         \centering
    \includegraphics[width=0.45\textwidth]{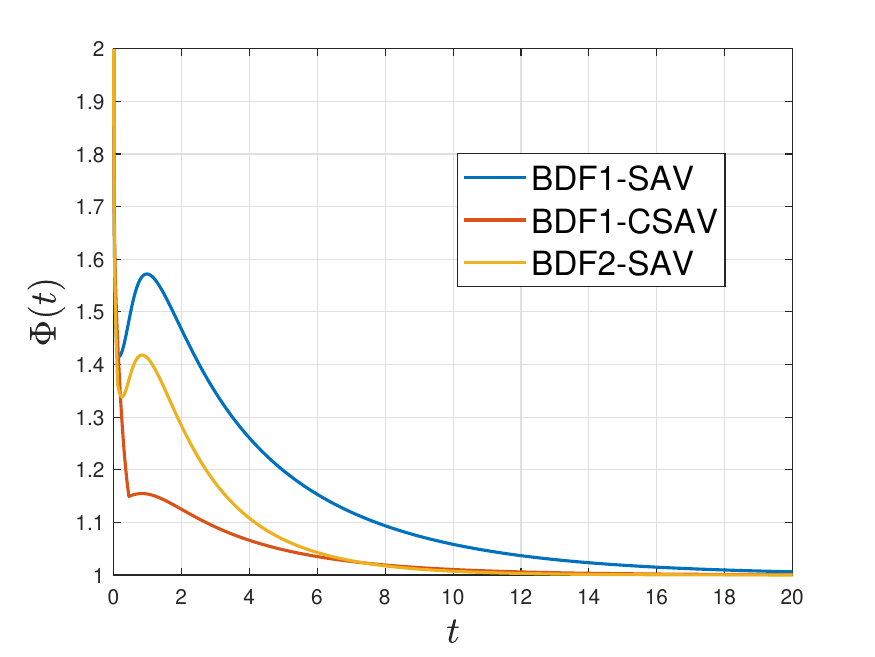}
    \includegraphics[width=0.45\textwidth]{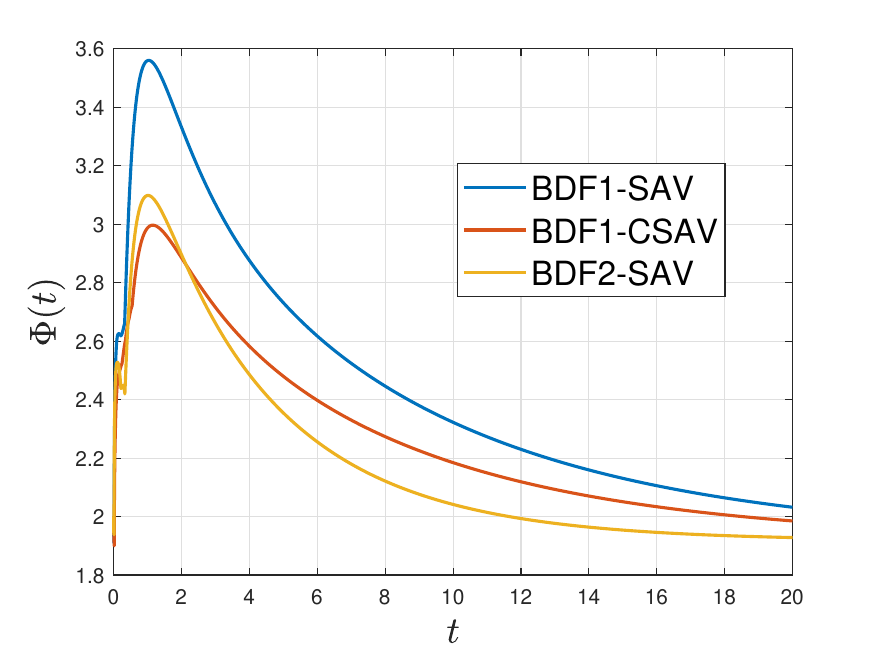}
    
    \caption{Temporal evolution of the mesh ratio $\Psi(t)$ for the three schemes with different surface energy $\gamma(\theta)=1+\beta \cos(4\theta)$: $\beta = 0, \frac{1}{20}.$ Parameters are chosen as $N=2^{-7}$, $\Delta t = 10^{-3}$, $r = 3$, $\sigma = \cos(\frac{3}{4}\pi)$.}
    \label{fig:mesh_SSD}
    \end{figure}

    \begin{figure}[!htp]
         \centering
    \includegraphics[width=0.33\textwidth]{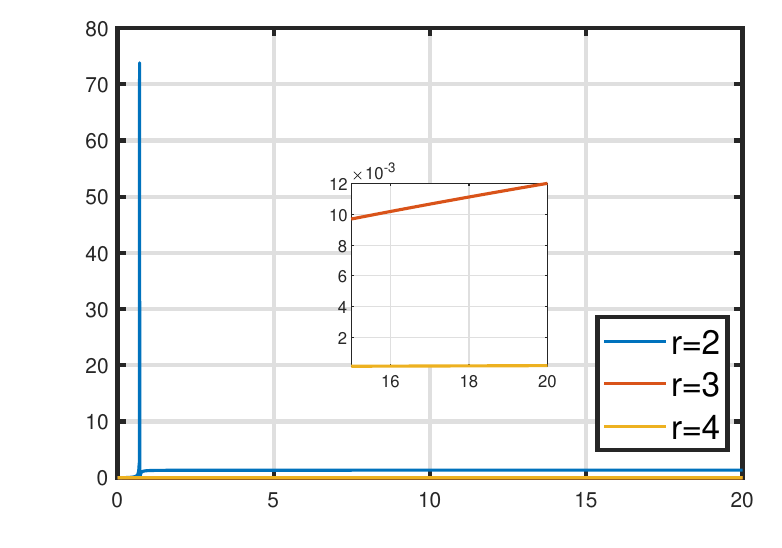}
    \includegraphics[width=0.33\textwidth]{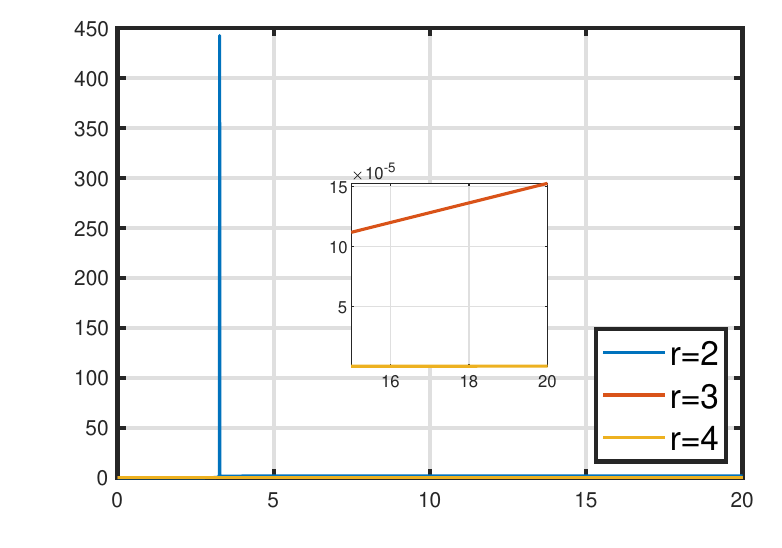}
    \includegraphics[width=0.33\textwidth]{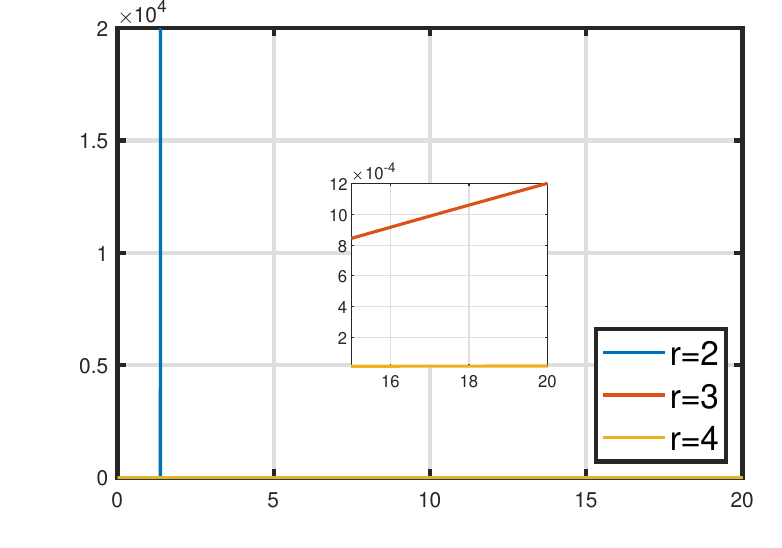}
    \caption{The area loss $ \left|A^h(\Vec{X}^m)-A^h(\Vec{X}^0) \right| $ for the BDF1-CSAV at r = 2,3,4 under surface energy $\gamma(\theta) = 1+ \beta \cos(4\theta)$: $\beta =0,\frac{1}{20},\frac{1}{10}$. Parameters are chosen as $N = 80$, $\Delta t = \frac{1}{160}$, $\sigma = \cos(\frac{3}{4}\pi)$.}
    \label{fig:CSAV_area_SSD}
\end{figure}
    
    \begin{figure}[!htp]
         \centering
    \includegraphics[width=0.33\textwidth]{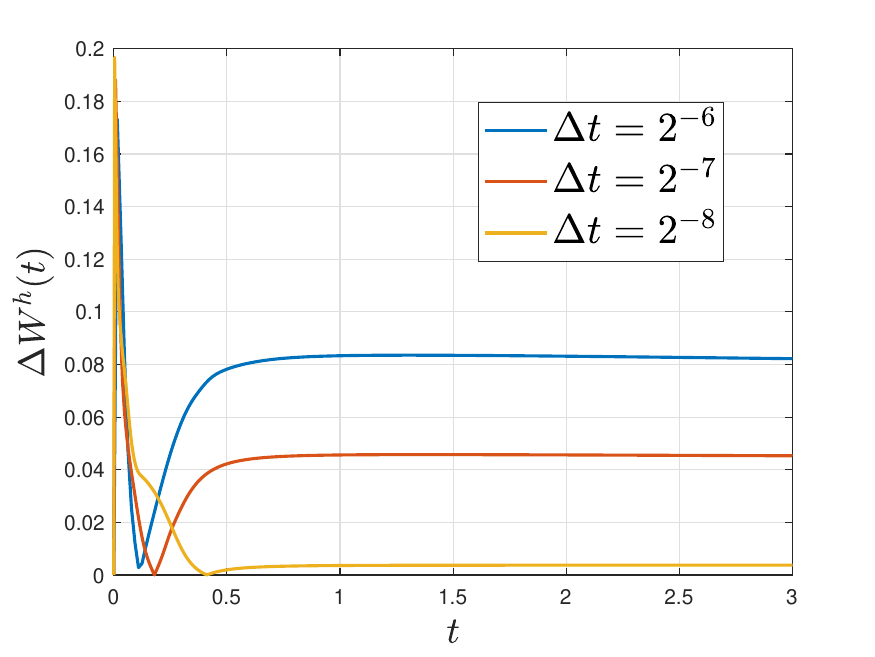}
    \includegraphics[width=0.33\textwidth]{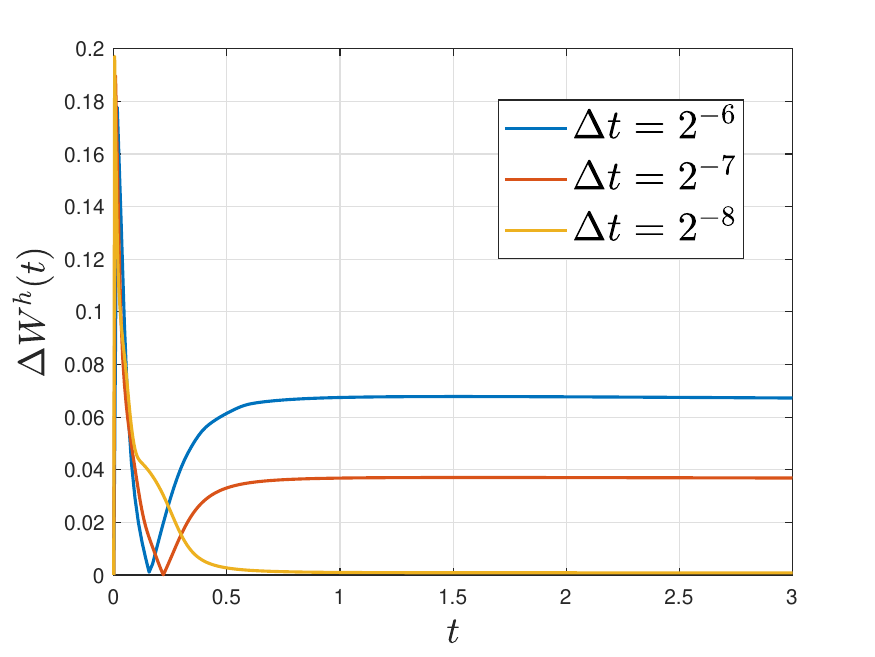}
    \includegraphics[width=0.33\textwidth]{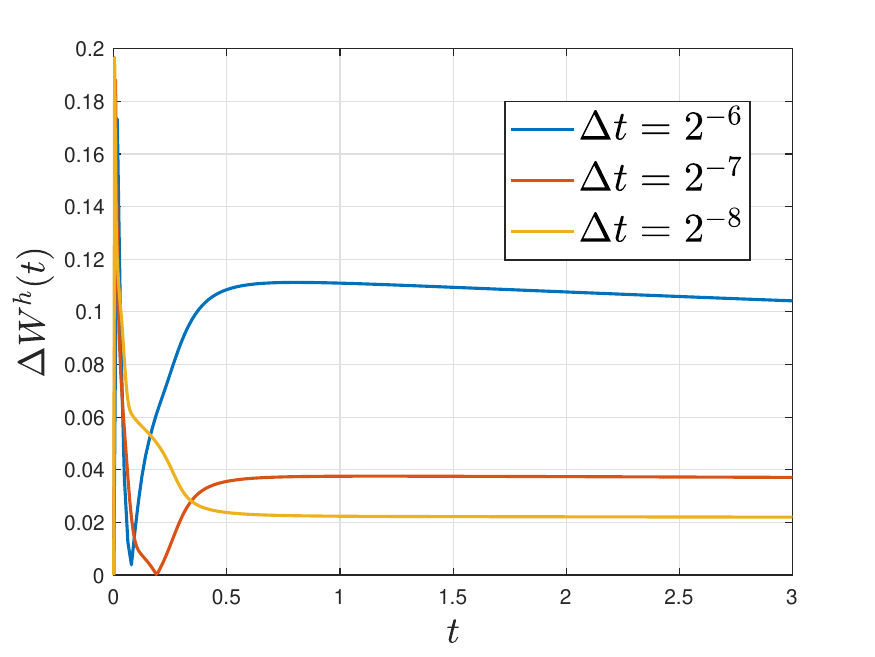}
    
    \caption{Temporal evolution of $\Delta W^h(t)$ for the three schemes with $\gamma(\theta)=1+\frac{1}{10}\cos(4\theta)$. Parameters are chosen as $N=64$, $r=3$, $\sigma = \cos(\frac{3}{4}\pi)$.}
    \label{fig:ener_dis_SSD}
    \end{figure}


\subsection{Applications to morphological evolution.}
Finally, we depict the morphological evolution of SDF and SSD for three schemes under different surface energy densities, starting with a rectangular initial shape. Figure \ref{fig:SDF_evo} illustrates the morphological evolution of SDF, clearly showing that the equilibrium shapes for different schemes are consistent. Moreover, equilibrium shapes for isotropic surface energy and anisotropic surface energy are circular and elliptical, respectively. Meanwhile, Figure \ref{fig:SSD_evo} depicts the morphological evolution of SSD.

  \begin{figure}[!htp]
         \centering
    \includegraphics[width=0.33\textwidth]{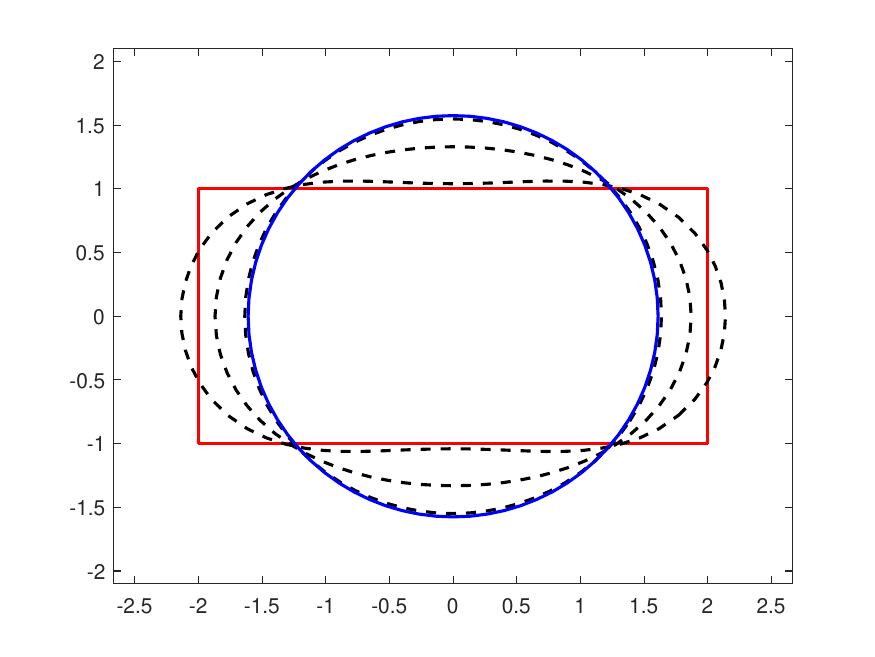}
    \includegraphics[width=0.33\textwidth]{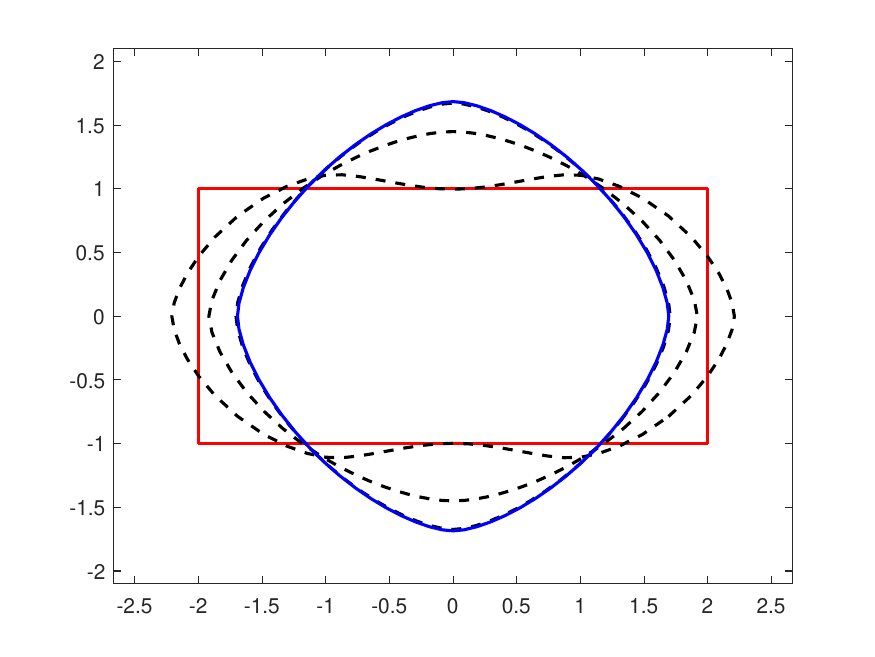}
    \includegraphics[width=0.33\textwidth]{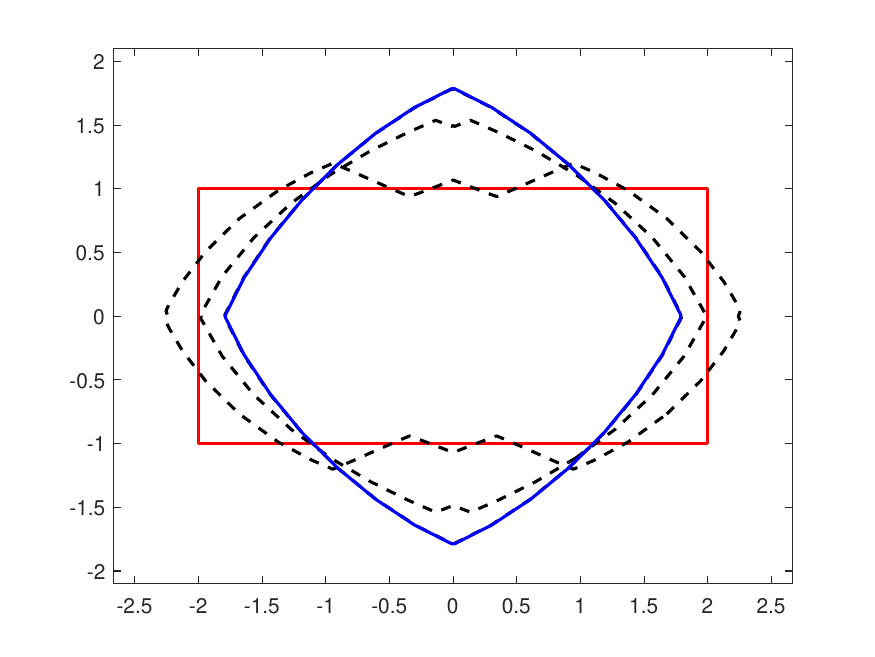}
    \includegraphics[width=0.33\textwidth]{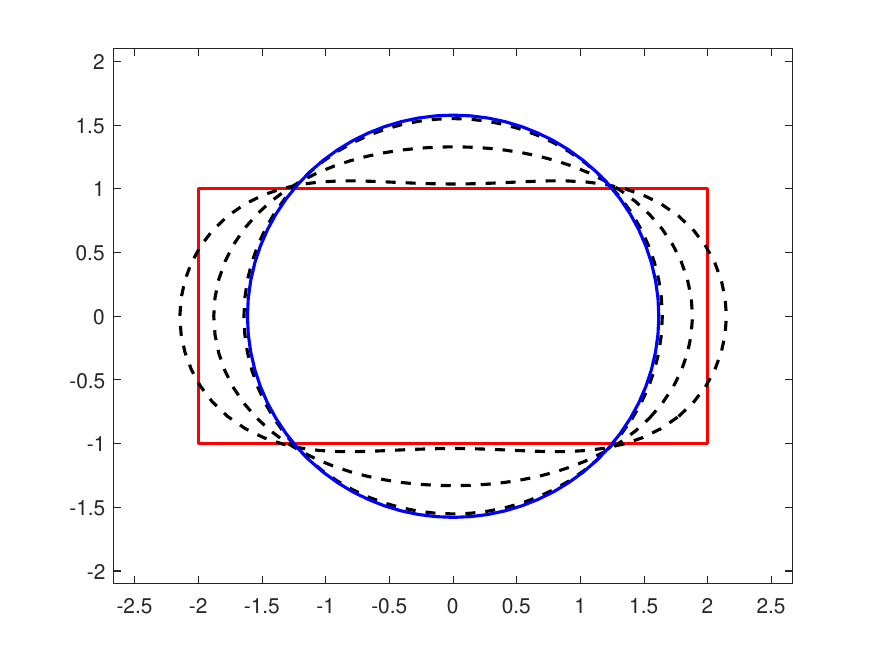}
    \includegraphics[width=0.33\textwidth]{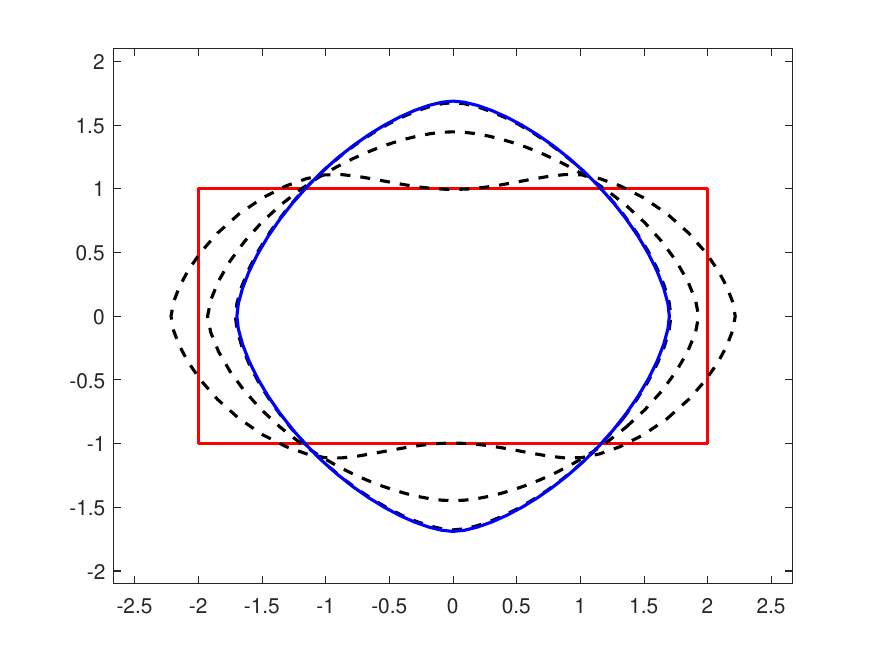}
    \includegraphics[width=0.33\textwidth]{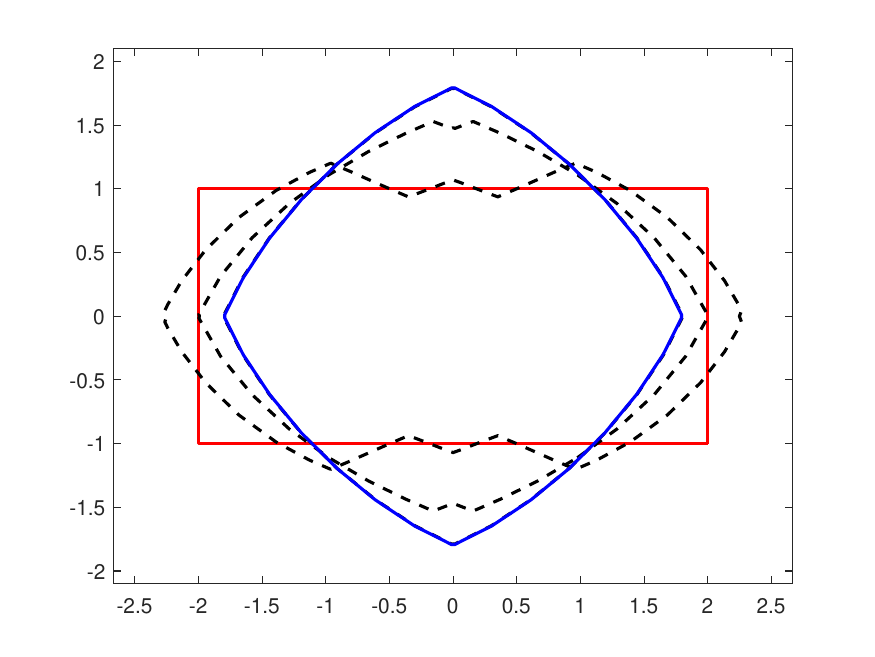}
    \includegraphics[width=0.33\textwidth]{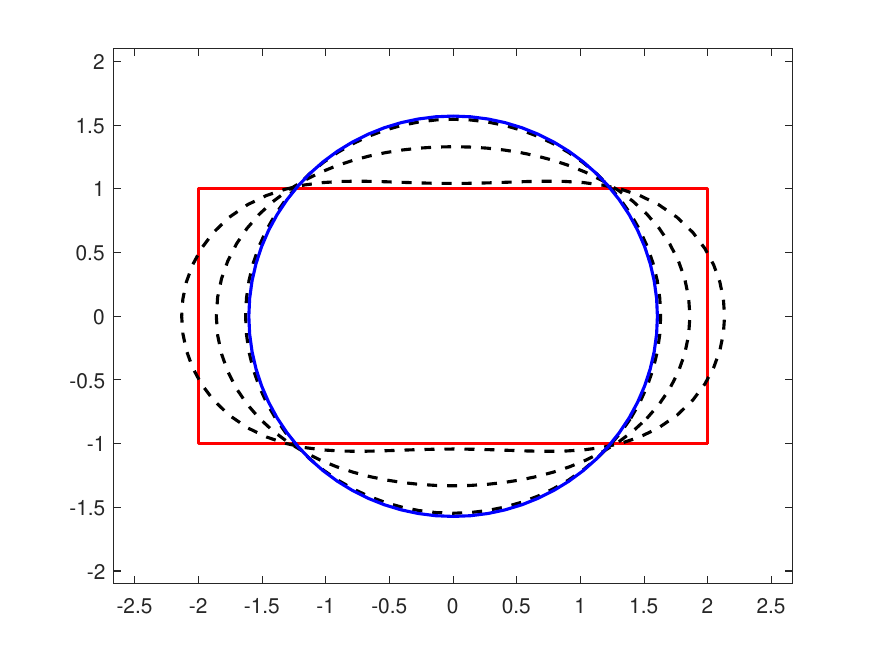}
    \includegraphics[width=0.33\textwidth]{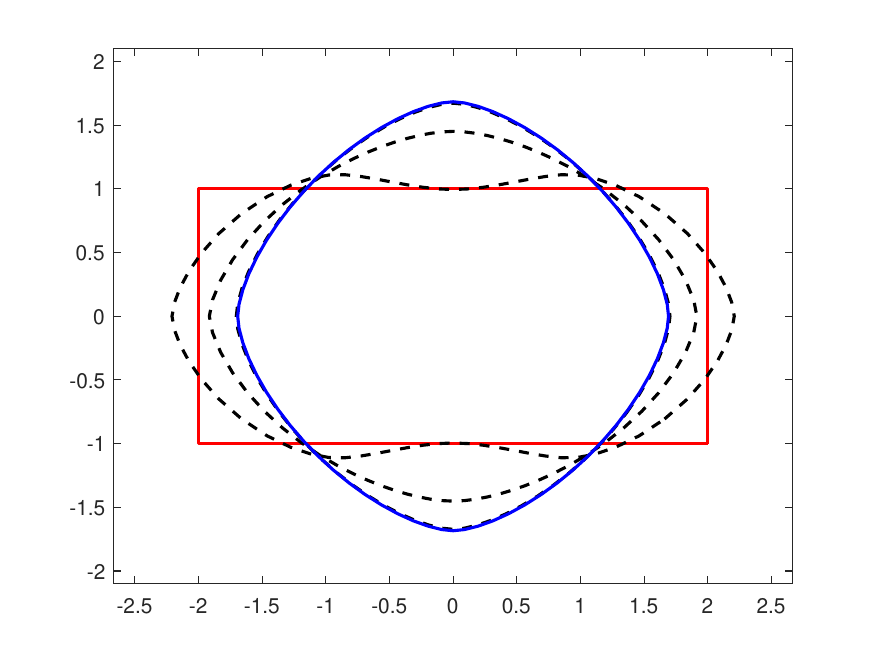}
    \includegraphics[width=0.33\textwidth]{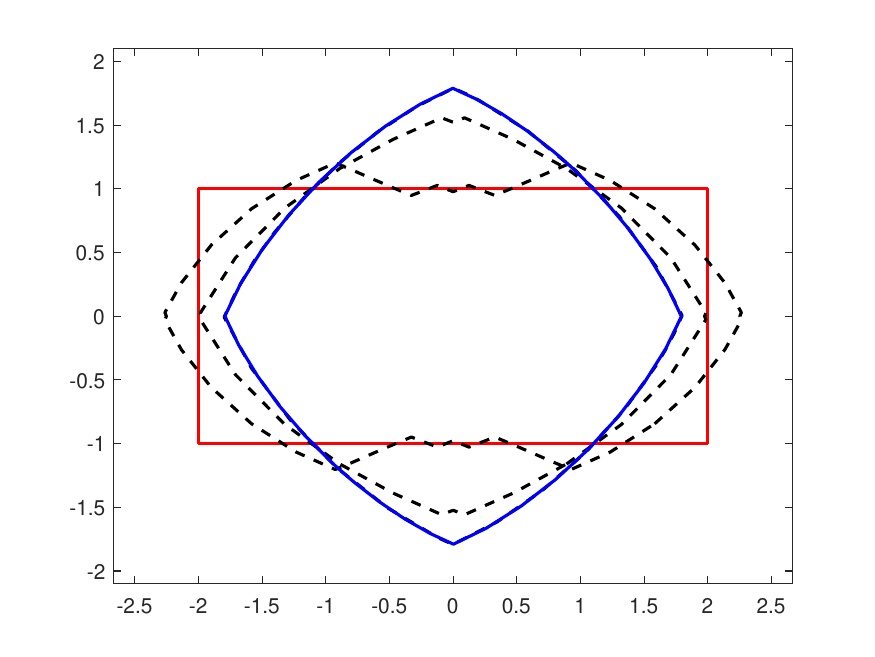}
    \caption{The morphological evolution of SDF for the three schemes under different surface energy densities $\gamma(\theta)=1+\beta \cos(4\theta)$, $\beta =0$, $\frac{1}{20}$, $\frac{1}{10}$. Other parameters are chosen as $N = 72$, $\Delta t = 10^{-3}$, $r = 3$.
}
    \label{fig:SDF_evo}
    \end{figure}
  \begin{figure}[!htp]
         \centering
    \includegraphics[width=0.33\textwidth]{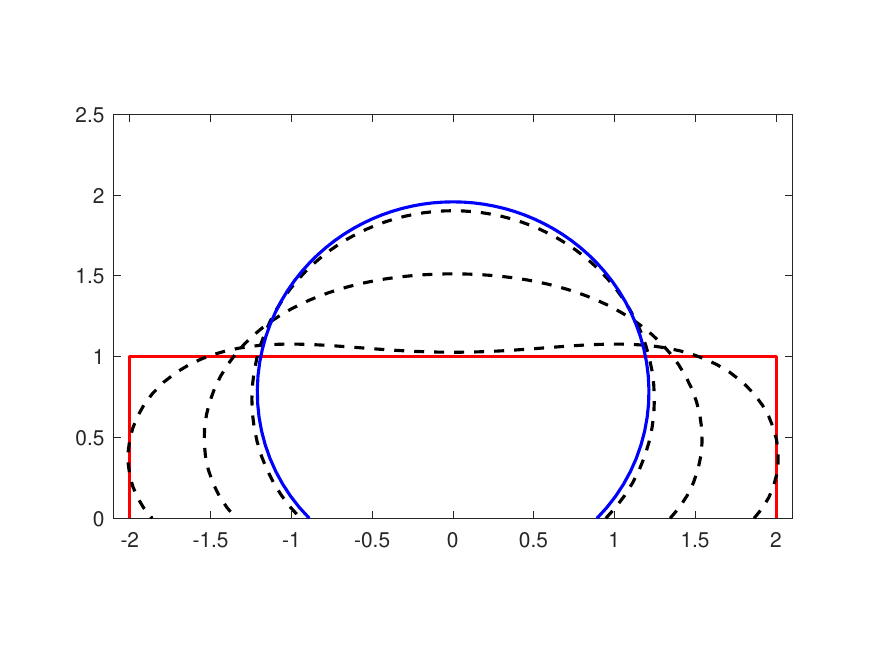}
    \includegraphics[width=0.33\textwidth]{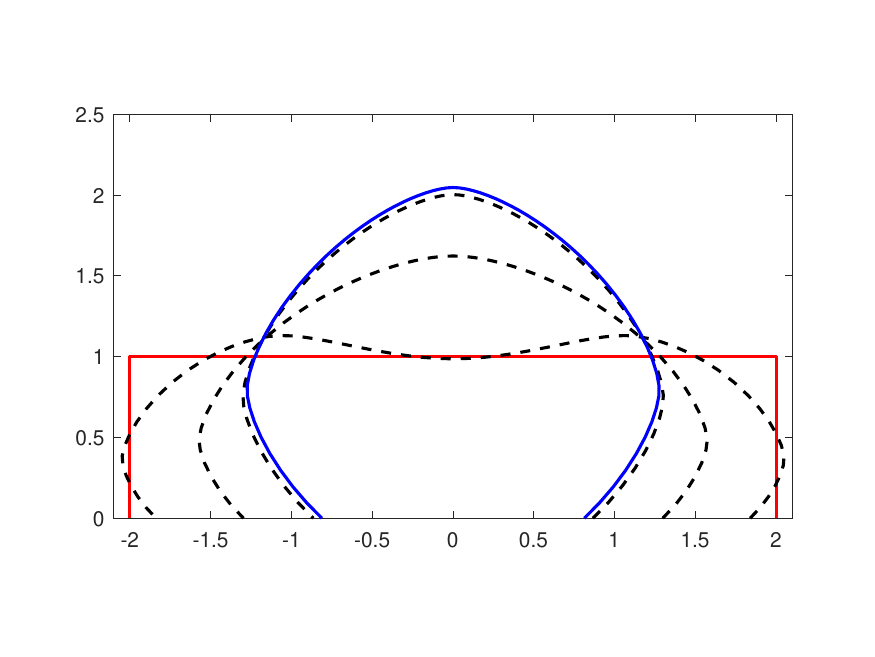}
    \includegraphics[width=0.33\textwidth]{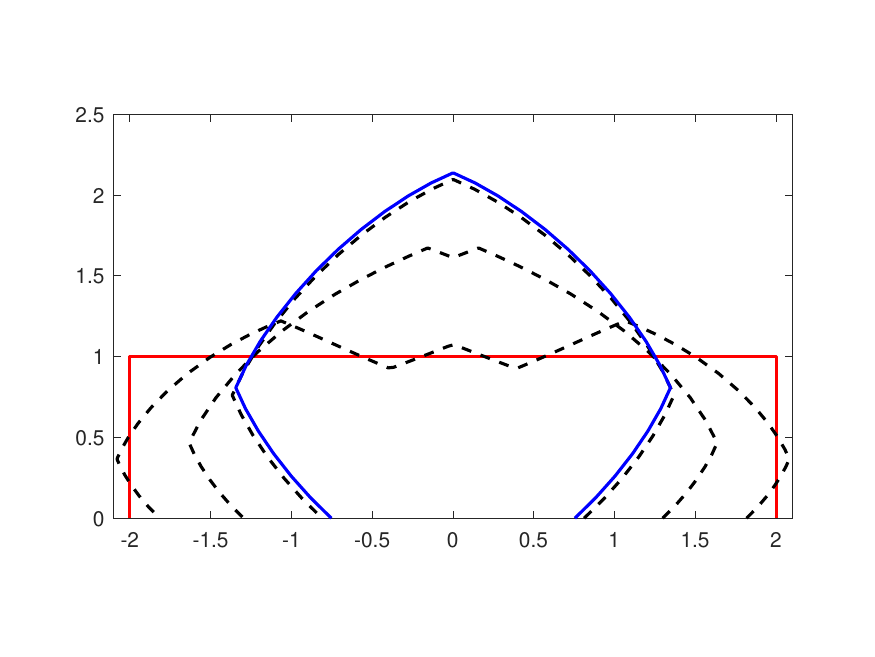}
    \includegraphics[width=0.33\textwidth]{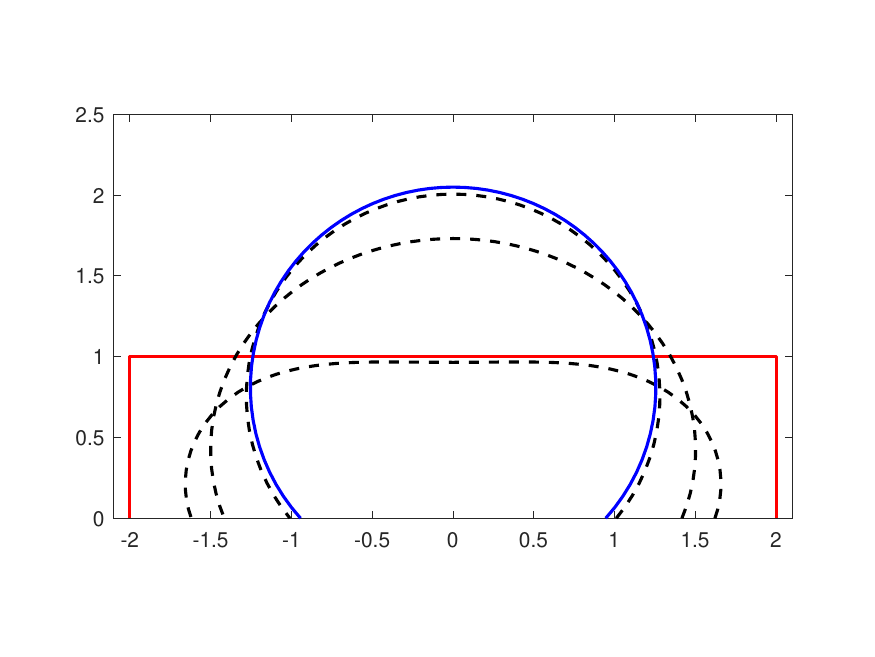}
    \includegraphics[width=0.33\textwidth]{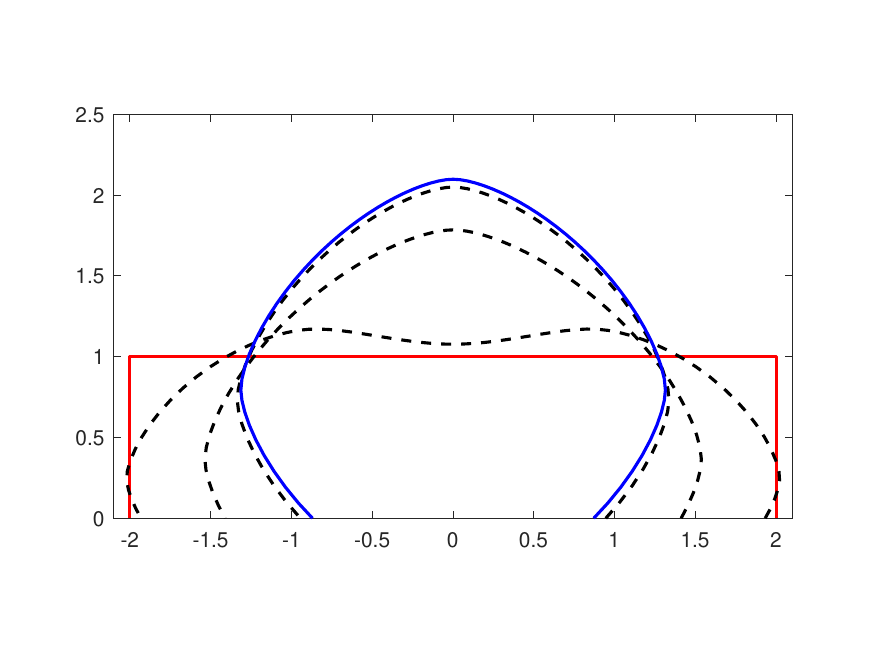}
    \includegraphics[width=0.33\textwidth]{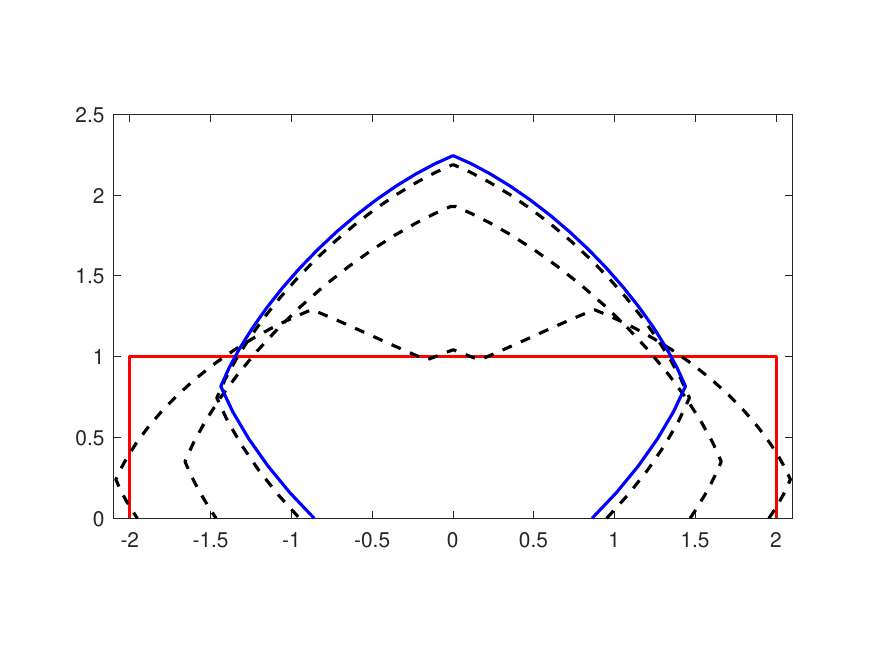}
    \includegraphics[width=0.33\textwidth]{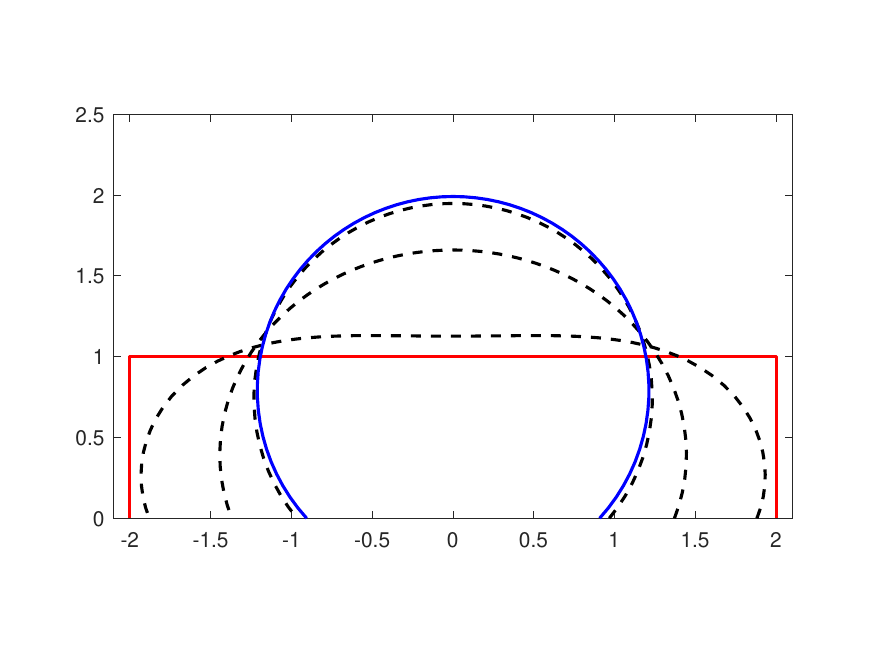}
    \includegraphics[width=0.33\textwidth]{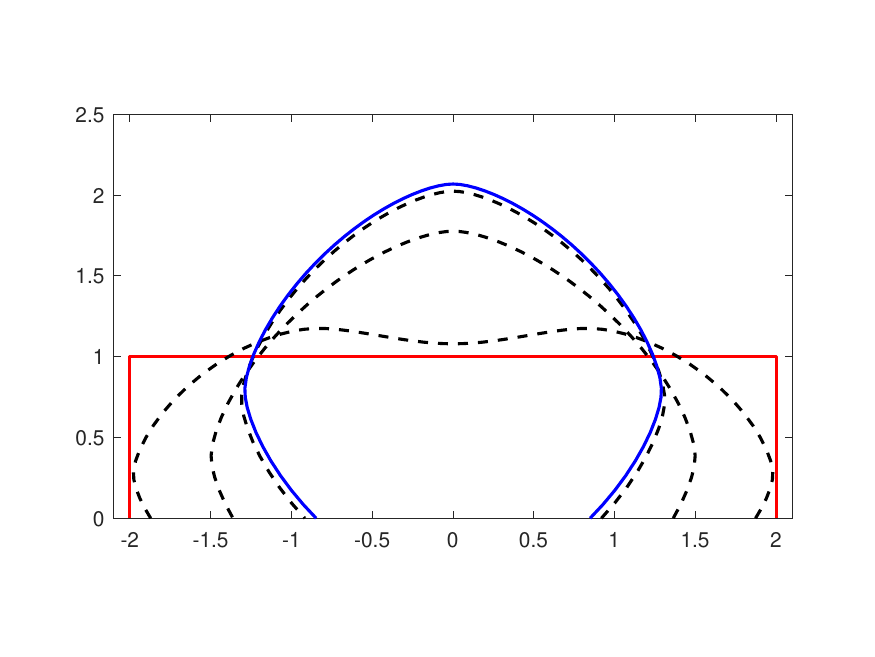}
    \includegraphics[width=0.33\textwidth]{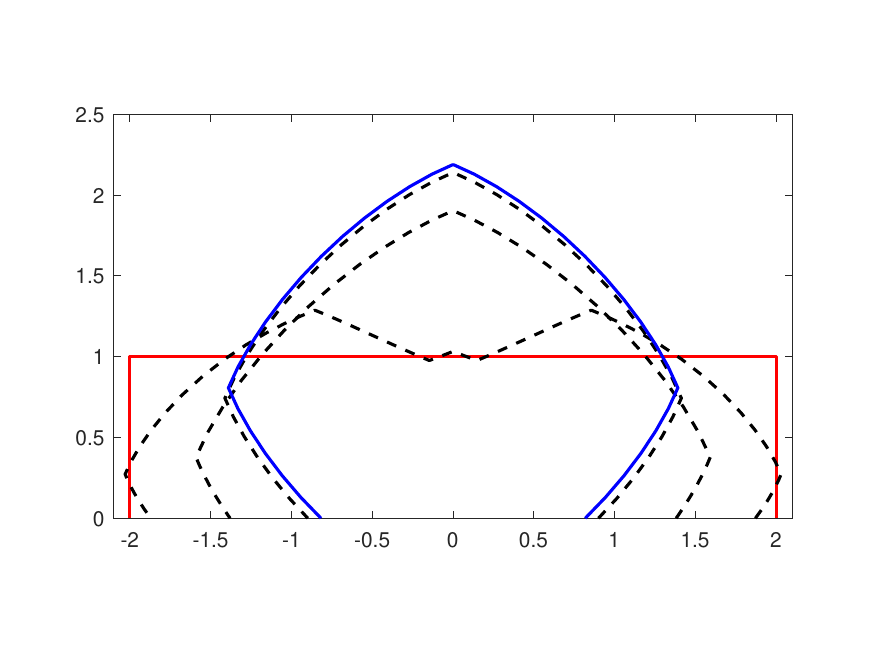}
    \caption{The morphological evolution of SSD for the three schemes under different surface energy densities $\gamma(\theta)=1+\beta \cos(4\theta)$, $\beta =0$, $\frac{1}{20}$, $\frac{1}{10}$. Other parameters are chosen as $N = 72$, $\Delta t = 10^{-3}$, $r = 3$, $\sigma = \cos(\frac{3}{4}\pi)$.}
    \label{fig:SSD_evo}
    \end{figure}


\section{Conclusions} \label{sec:conclusion}
Based on the SAV approach, we established the energy-stable parametric finite approximations for the SDF and SSD. The boundedness of the discrete original energy and the stability of the modified energy for the schemes were proven, as well as the approximately area-conservation of the BDF1-CSAV scheme. Finally, our numerical results demonstrated advantages of the proposed schemes. 
In the near future, we will employ the energy-stable methods  proposed in this works to other curvature flows, such as mean curvature flow and Willmore flow in two-dimensional and three-dimensional spaces.
\bibliographystyle{elsarticle-num}
\bibliography{references}

\begin{thebibliography}{10}
\expandafter\ifx\csname url\endcsname\relax
  \def\url#1{\texttt{#1}}\fi
\expandafter\ifx\csname urlprefix\endcsname\relax\def\urlprefix{URL }\fi
\expandafter\ifx\csname href\endcsname\relax
  \def\href#1#2{#2} \def\path#1{#1}\fi

\bibitem{huang2020highly}
F.~Huang, J.~Shen, Z.~Yang, A highly efficient and accurate new scalar auxiliary variable approach for gradient flows, SIAM J. Sci. Comput. 42~(4) (2020) A2514--A2536.

\bibitem{oura2013}
K.~Oura, V.~Lifshits, A.~Saranin, A.~Zotov, M.~Katayama, Surface science: an introduction, Springer Science \& Business Media, 2013.

\bibitem{chang90}
C.~S. Chang, A.~Misra, Application of uniform strain theory to heterogeneous granular solids, J. Eng. Mech. 116~(10) (1990) 2310--2328.

\bibitem{Cahn91}
J.~Cahn, Stability, microstructural evolution, grain growth, and coarsening in a two-dimensional two-phase microstructure, Acta Metall. Mater. 39~(10) (1991) 2189--2199.

\bibitem{gomer1990}
R.~Gomer, Diffusion of adsorbates on metal surfaces, Rep. Prog. Phys. 53~(7) (1990) 917.

\bibitem{Jiran90}
E.~Jiran, C.~Thompson, Capillary instabilities in thin films, J. Electron. Mater. 19~(11) (1990) 1153--1160.

\bibitem{jiang2012phase}
W.~Jiang, W.~Bao, C.~V. Thompson, D.~J. Srolovitz, Phase field approach for simulating solid-state dewetting problems, Acta Mater. 60~(15) (2012) 5578--5592.

\bibitem{Jiang19a}
W.~Jiang, Q.~Zhao, Sharp-interface approach for simulating solid-state dewetting in two dimensions: a {Cahn-Hoffman} $\boldsymbol{\xi}$-vector formulation, Physica D 390 (2019) 69--83.

\bibitem{Wong00}
H.~Wong, P.~Voorhees, M.~Miksis, S.~Davis, Periodic mass shedding of a retracting solid film step, Acta Mater. 48~(8) (2000) 1719--1728.

\bibitem{Dornel06}
E.~Dornel, J.~Barbe, F.~De~Cr{\'e}cy, G.~Lacolle, J.~Eymery, Surface diffusion dewetting of thin solid films: Numerical method and application to {Si/SiO$_2$}, Phys. Rev. B 73~(11) (2006) 115427.

\bibitem{hyun2013quantitative}
G.~Hyun~Kim, R.~V. Zucker, J.~Ye, W.~Craig~Carter, C.~V. Thompson, Quantitative analysis of anisotropic edge retraction by solid-state dewetting of thin single crystal films, J. Appl. Phys. 113~(4) (2013) 043512.

\bibitem{Jiran92}
E.~Jiran, C.~Thompson, Capillary instabilities in thin, continuous films, Thin Solid Films. 208~(1) (1992) 23--28.

\bibitem{ye2010mechanisms}
J.~Ye, C.~V. Thompson, Mechanisms of complex morphological evolution during solid-state dewetting of single-crystal nickel thin films, Appl. Phys. Lett. 97~(7) (2010) 071904.

\bibitem{Kan05}
W.~Kan, H.~Wong, Fingering instability of a retracting solid film edge, J. Appl. Phys. 97~(4) (2005) 043515.

\bibitem{Ye10b}
J.~Ye, C.~V. Thompson, Regular pattern formation through the retraction and pinch-off of edges during solid-state dewetting of patterned single crystal films, Phys. Rev. B 82~(19) (2010) 193408.

\bibitem{Ye11a}
J.~Ye, C.~V. Thompson, Anisotropic edge retraction and hole growth during solid-state dewetting of single crystal nickel thin films, Acta Mater. 59~(2) (2011) 582--589.

\bibitem{Ye11b}
J.~Ye, C.~V. Thompson, Templated solid-state dewetting to controllably produce complex patterns, Adv. Mater. 23~(13) (2011) 1567--1571.

\bibitem{thompson12solid}
C.~V. Thompson, Solid-state dewetting of thin films, Annu. Rev. Mater. Res. 42~(1) (2012) 399--434.

\bibitem{Zucker13}
R.~V. Zucker, G.~H. Kim, W.~C. Carter, C.~V. Thompson, A model for solid-state dewetting of a fully-faceted thin film, C. R. Phys. 14~(7) (2013) 564--577.

\bibitem{Rabkin14}
E.~Rabkin, D.~Amram, E.~Alster, Solid state dewetting and stress relaxation in a thin single crystalline {Ni} film on sapphire, Acta Mater. 74 (2014) 30--38.

\bibitem{Mizsei93}
J.~Mizsei, Activating technology of {SnO$_2$} layers by metal particles from ultrathin metal films, Sensor Actuat B-Chem. 16~(1) (1993) 328--333.

\bibitem{Armelao06}
L.~Armelao, D.~Barreca, G.~Bottaro, A.~Gasparotto, S.~Gross, C.~Maragno, E.~Tondello, Recent trends on nanocomposites based on {Cu, Ag and Au} clusters: A closer look, Coord. Chem. Rev. 250~(11) (2006) 1294--1314.

\bibitem{Schmidt09}
V.~Schmidt, J.~V. Wittemann, S.~Senz, U.~G{\"o}sele, Silicon nanowires: a review on aspects of their growth and their electrical properties, Adv. Mater. 21~(25-26) (2009) 2681--2702.

\bibitem{Cahn94}
J.~W. Cahn, J.~E. Taylor, Surface motion by surface diffusion, Acta Metall. Mater. 42~(4) (1994) 1045--1063.

\bibitem{fonseca2015}
I.~Fonseca, N.~Fusco, G.~Leoni, M.~Morini, Motion of three-dimensional elastic films by anisotropic surface diffusion with curvature regularization, Anal. PDE. 8~(2) (2015) 373--423.

\bibitem{Dziuk90}
G.~Dziuk, An algorithm for evolutionary surfaces, Numer. Math. 58~(1) (1990) 603--611.

\bibitem{ColemanFM96}
B.~D. Coleman, R.~S. Falk, M.~Moakher, Space-time finite element methods for surface diffusion with applications to the theory of the stability of cylinders, SIAM J. Sci. Comput. 17~(6) (1996) 1434--1448.

\bibitem{Bansch04}
E.~B{\"a}nsch, P.~Morin, R.~H. Nochetto, Surface diffusion of graphs: variational formulation, error analysis, and simulation, SIAM J. Numer. Anal. 42~(2) (2004) 773--799.

\bibitem{Bansch05}
E.~B{\"a}nsch, P.~Morin, R.~H. Nochetto, A finite element method for surface diffusion: the parametric case, J. Comput. Phys. 203~(1) (2005) 321--343.

\bibitem{hausser2007discrete}
F.~Hau{\ss}er, A.~Voigt, A discrete scheme for parametric anisotropic surface diffusion, J. Sci. Comput. 30~(2) (2007) 223--235.

\bibitem{Barrett07}
J.~W. Barrett, H.~Garcke, R.~N{\"u}rnberg, A parametric finite element method for fourth order geometric evolution equations, J. Comput. Phys. 222~(1) (2007) 441--467.

\bibitem{Barrett08JCP}
J.~W. Barrett, H.~Garcke, R.~N{\"u}rnberg, On the parametric finite element approximation of evolving hypersurfaces in {$\mathbb{R}^3$}, J. Comput. Phys. 227~(9) (2008) 4281--4307.

\bibitem{Kemmochi17}
T.~Kemmochi, Energy dissipative numerical schemes for gradient flows of planar curves, BIT Numer. Math. 57 (2017) 991--1017.

\bibitem{Kovacs2019}
B.~Kov{\'a}cs, B.~Li, C.~Lubich, A convergent evolving finite element algorithm for mean curvature flow of closed surfaces, Numer. Math. 143~(4) (2019) 797--853.

\bibitem{Kovacs2020}
B.~Kov{\'a}cs, B.~Li, C.~Lubich, A convergent evolving finite element algorithm for {Willmore} flow of closed surfaces, Numer. Math. (2021) to appear.

\bibitem{Jiang2021}
W.~Jiang, B.~Li, A perimeter-decreasing and area-conserving algorithm for surface diffusion flow of curves, J. Comput. Phys. 443 (2021) 110531.

\bibitem{Zhao2021}
W.~Bao, Q.~Zhao, A structure-preserving parametric finite element method for surface diffusion, SIAM J. Numer. Anal. 59~(5) (2021) 2775--2799.

\bibitem{BGNZ21volume}
W.~Bao, H.~Garcke, R.~N{\"u}rnberg, Q.~Zhao, Volume-preserving parametric finite element methods for axisymmetric geometric evolution equations, J. Comput. Phys. 460 (2022) 111180.

\bibitem{BJY21symmetrized}
W.~Bao, W.~Jiang, Y.~Li, A symmetrized parametric finite element method for anisotropic surface diffusion of closed curves, SIAM J. Numer. Anal. (To appear) (2023).

\bibitem{hu22evolving}
J.~Hu, B.~Li, Evolving finite element methods with an artificial tangential velocity for mean curvature flow and willmore flow, Numer. Math. 152~(1) (2022) 127--181.

\bibitem{Deckelnick05}
K.~Deckelnick, G.~Dziuk, C.~M. Elliott, Computation of geometric partial differential equations and mean curvature flow, Acta Numer. 14 (2005) 139--232.

\bibitem{Barrett20}
J.~W. Barrett, H.~Garcke, R.~N{\"u}rnberg, Parametric finite element approximations of curvature driven interface evolutions, Handb. Numer. Anal. (Andrea Bonito and Ricardo H. Nochetto, eds.) 21 (2020) 275--423.

\bibitem{du2010tangent}
P.~Du, M.~Khenner, H.~Wong, A tangent-plane marker-particle method for the computation of three-dimensional solid surfaces evolving by surface diffusion on a substrate, J. Comput. Phys. 229~(3) (2010) 813--827.

\bibitem{Deckelnick05fully}
K.~Deckelnick, G.~Dziuk, C.~M. Elliott, Fully discrete finite element approximation for anisotropic surface diffusion of graphs, SIAM J. Numer. Anal. 43~(3) (2005) 1112--1138.

\bibitem{xu2009local}
Y.~Xu, C.-W. Shu, Local discontinuous galerkin method for surface diffusion and willmore flow of graphs, J. Sci. Comput. 40~(1-3) (2009) 375--390.

\bibitem{Barrett08Ani}
J.~W. Barrett, H.~Garcke, R.~N{\"u}rnberg, A variational formulation of anisotropic geometric evolution equations in higher dimensions, Numer. Math. 109~(1) (2008) 1--44.

\bibitem{Barrett19}
J.~W. Barrett, H.~Garcke, R.~N{\"u}rnberg, Finite element methods for fourth order axisymmetric geometric evolution equations, J. Comput. Phys. 376 (2019) 733--766.

\bibitem{BGNZ23}
W.~Bao, H.~Garcke, R.~N{\"u}rnberg, Q.~Zhao, A structure-preserving finite element approximation of surface diffusion for curve networks and surface clusters, Numer. Methods Partial. Diff. Equ. 39~(1) (2023) 759--794.

\bibitem{Bao17}
W.~Bao, W.~Jiang, Y.~Wang, Q.~Zhao, A parametric finite element method for solid-state dewetting problems with anisotropic surface energies, J. Comput. Phys. 330 (2017) 380--400.

\bibitem{Li2020energy}
Y.~Li, W.~Bao, An energy-stable parametric finite element method for anisotropic surface diffusion, J. Comput. Phys. 446 (2021) 110658.

\bibitem{Barrett07Ani}
J.~W. Barrett, H.~Garcke, R.~N{\"u}rnberg, Numerical approximation of anisotropic geometric evolution equations in the plane, SIMA J. Numer. Anal. 28~(2) (2007) 292--330.

\bibitem{bao2023symmetrized1}
W.~Bao, Y.~Li, A symmetrized parametric finite element method for anisotropic surface diffusion in 3{D}, SIAM J. Sci. Comput. 45~(4) (2023) A1438--A1461.

\bibitem{bao2024structure}
W.~Bao, Y.~Li, A structure-preserving parametric finite element method for geometric flows with anisotropic surface energy, Numer. Math. 156 (2024) 609--639.

\bibitem{mikula04}
K.~Mikula, D.~Sevcovic, A direct method for solving an anisotropic mean curvature flow of plane curves with an external force, Math. Methods Appl. Sci. 27~(13) (2004) 1545--1565.

\bibitem{duan24}
B.~Duan, B.~Li, New artificial tangential motions for parametric finite element approximation of surface evolution, SIAM J. Sci. Comput. 46~(1) (2024) A587--A608.

\bibitem{Zhao20}
Q.~Zhao, W.~Jiang, W.~Bao, An energy-stable parametric finite element method for simulating solid-state dewetting, IMA J. Numer. Anal. 41~(3) (2021) 2026--2055.

\bibitem{Zhao2020p}
Q.~Zhao, W.~Jiang, W.~Bao, A parametric finite element method for solid-state dewetting problems in three dimensions, SIAM J. Sci. Comput. 42~(1) (2020) B327--B352.

\bibitem{Wang15}
Y.~Wang, W.~Jiang, W.~Bao, D.~J. Srolovitz, Sharp interface model for solid-state dewetting problems with weakly anisotropic surface energies, Phys. Rev. B 91 (2015) 045303.

\bibitem{JiangZB20}
W.~Jiang, Q.~Zhao, W.~Bao, Sharp-interface model for simulating solid-state dewetting in three dimensions, SIAM J. Appl. Math. 80~(4) (2020) 1654--1677.

\bibitem{jiang24}
W.~Jiang, C.~Su, G.~Zhang, A second-order in time, bgn-based parametric finite element method for geometric flows of curves, J. Comput. Phys. (2024) 113220.

\bibitem{jiang24stable}
W.~Jiang, C.~Su, G.~Zhang, Stable bdf time discretization of bgn-based parametric finite element methods for geometric flows, arXiv preprint arXiv:2402.03641 (2024).

\bibitem{chen2019efficient}
C.~Chen, X.~Yang, Efficient numerical scheme for a dendritic solidification phase field model with melt convection, J. Comput. Phys. 388 (2019) 41--62.

\bibitem{yang2016linear}
X.~Yang, Linear, first and second-order, unconditionally energy stable numerical schemes for the phase field model of homopolymer blends, J. Comput. Phys. 327 (2016) 294--316.

\bibitem{chen2019fast}
C.~Chen, X.~Yang, Fast, provably unconditionally energy stable, and second-order accurate algorithms for the anisotropic cahn--hilliard model, Comput. Methods Appl. Mech. Engrg. 351 (2019) 35--59.

\bibitem{Shen18}
J.~Shen, J.~Xu, J.~Yang, The scalar auxiliary variable {(SAV)} approach for gradient flows, J. Comput. Phys. 353 (2018) 407--416.

\bibitem{huang2022new}
F.~Huang, J.~Shen, A new class of implicit--explicit bdfk sav schemes for general dissipative systems and their error analysis, Comput. Methods Appl. Mech. Eng. 392 (2022) 114718.

\bibitem{shen2019new}
J.~Shen, J.~Xu, J.~Yang, A new class of efficient and robust energy stable schemes for gradient flows, SIAM Rev. 61~(3) (2019) 474--506.

\bibitem{huang2023stability}
F.~Huang, J.~Shen, Stability and error analysis of a second-order consistent splitting scheme for the navier--stokes equations, SIAM J. Numer. Anal. 61~(5) (2023) 2408--2433.

\bibitem{yang2024linear}
J.~Yang, Linear energy-stable method with correction technique for the {Ohta--Kawasaki--Navier--Stokes} model of incompressible diblock copolymer melt, Commun. Nonlinear Sci. Numer. Simul. 131 (2024) 107835.

\bibitem{li2023error}
X.~Li, J.~Shen, Error estimate of a consistent splitting {GSAV} scheme for the {Navier-Stokes} equations, Appl. Numer. Math. 188 (2023) 62--74.

\bibitem{li2024parametric}
M.~Li, Q.~Zhao, Parametric finite element approximations for anisotropic surface diffusion with axisymmetric geometry, J. Comput. Phys. 497 (2024) 112632.

\end{thebibliography}
\end{document}